\documentclass[12pt]{amsart}

\usepackage[margin=1in]{geometry}
\usepackage{amsmath}
\usepackage{amssymb}
\usepackage{tikz-cd}

\usepackage{hyperref}
\usepackage{thmtools}
\usepackage[nameinlink, capitalize]{cleveref}
\hypersetup{
    colorlinks=true,
    linkcolor=black,
    filecolor=green,      
    urlcolor=cyan,
    citecolor=magenta,
    pdftitle={The Mumford--Tate group of the very general Lagrangian fiber},
    pdfauthor={Edward Varvak},
}
\usepackage{caption} 
\newcommand{\R}{\mathbb{R}}
\newcommand{\Q}{\mathbb{Q}}
\newcommand{\C}{\mathbb{C}}
\newcommand{\Z}{\mathbb{Z}}
\newcommand{\Aut}{\mathrm{Aut}}
\newcommand{\Hom}{\mathrm{Hom}}
\newcommand{\id}{\mathrm{id}}

\newcommand{\bbH}{\mathbb{H}}
\newcommand{\bbP}{\mathbb{P}}

\newcommand{\cT}{\mathcal{T}}
\newcommand{\cV}{\mathcal{V}}
\newcommand{\cP}{\mathcal{P}}
\newcommand{\cA}{\mathcal{A}}

\newcommand{\Sp}{\mathrm{Sp}}
\newcommand{\SL}{\mathrm{SL}}
\newcommand{\GL}{\mathrm{GL}}
\newcommand{\ol}{\overline}
\newcommand{\ul}{\underline}
\newcommand{\wt}{\widetilde}
\newcommand{\wh}{\widehat}

\newcommand{\rightQ}[2]{\left.\raisebox{.2em}{$#1$}\middle/\raisebox{-.2em}{$#2$}\right.}
\newcommand{\leftQ}[2]{\left.\raisebox{-.2em}{$#2$}\middle\backslash\raisebox{.2em}{$#1$}\right.}

\DeclareMathOperator{\im}{\mathrm{im}}

\newtheorem{theorem}{Theorem}[section]
\newtheorem{proposition}[theorem]{Proposition}
\newtheorem{lemma}[theorem]{Lemma}
\newtheorem{corollary}[theorem]{Corollary}
\newtheorem{question}[theorem]{Question}

\theoremstyle{definition}
\newtheorem{definition}[theorem]{Definition}
\newtheorem{example}[theorem]{Example}
\newtheorem{remark}[theorem]{Remark}

\numberwithin{equation}{section}

\title{The Mumford--Tate group of a very general Lagrangian fiber}
\author{Edward Varvak}
\address{\parbox{\linewidth}{
Department of Mathematics \\
University of Chicago \\
5734 S University Ave \\
Chicago, IL 60637 \\}}

\email{varvak@uchicago.edu}

\begin{document}

\begin{abstract}
    For a primitive symplectic variety \(X\) admitting a holomorphic Lagrangian fibration 
    \(f: X \to B\), we classify the possible Mumford--Tate groups of the very 
    general fiber. We extract as a consequence that the discriminant of 
    \(f\) has codimension one in \(B\). In the special case when \(X\) is a hyper-K\"ahler manifold which 
    is general among those admitting a Lagrangian fibration, and \(b_2(X) \geq 7\), 
    we further show that the very general fiber of \(f\) is Hodge-generic. Finally, as an 
    application of our classification, we confirm an expectation of Li--Tosatti that 
    when \(f\) is not isotrivial, the special K\"ahler metric on \(B\) has positive 
    holomorphic sectional curvature on the regular values of \(f\).
\end{abstract}

\maketitle

\section{Introduction}

In this paper, we will use Hodge-theoretic methods to study holomorphic symplectic 
varieties. We will start with the following definitions.

\begin{definition}
    Let \(X\) and \(B\) be normal analytic varieties, and \(f: X \to B\) a non-trivial 
    surjective morphism with connected fibers.
    \begin{enumerate}
        \item We say that \(X\) is {\bf holomorphic symplectic} if it admits a 
        non-degenerate 2-form \(\sigma \in H^0(X^\mathrm{sm}, \Omega_{X^\mathrm{sm}}^2)\) 
        which extends holomorphically onto a resolution of singularities \(Y \to X\).
        \item We say that a subvariety \(Z \subseteq X\) is {\bf Lagrangian} if 
        \(\sigma|_Z = 0\) and \(\dim Z = \frac{1}{2} \dim X\). We say that 
        \(f: X \to B\) is a Lagrangian fibration if every fiber of \(f\) is Lagrangian.
        \item Suppose that \(X\) is compact, \(\Q\)-factorial, and log-terminal. 
        We say that \(X\) is {\bf primitive symplectic} if \(H^1(X, \mathcal{O}_X) = 0\) 
        and the symplectic form \(\sigma\) is unique up to scaling.
        \item If \(X\) is a compact, simply connected K\"ahler manifold with a symplectic 
        form \(\sigma\) unique up to scaling, we say that \(X\) is {\bf hyper-K\"ahler}.
    \end{enumerate}
\end{definition}

By the Beauville--Bogomolov decomposition theorem, any smooth compact symplectic variety
is isogenous to a product of hyper-K\"ahler manifolds and complex tori. Their work has 
more recently been generalized to compact symplectic varieties, which are isogenous to 
products of complex tori and special classes of primitive symplectic varieties (see, 
for instance, \cite[Theorem 1.5]{horing-peternell19}).

When \(X\) is hyper-K\"ahler, much is understood. For instance, any non-trivial surjective
morphism \(f: X \to B\) with connected fibers is forced to be a Lagrangian fibration \cite{greb-lehn14}, 
and \(B\) is smooth if and only if \(B \simeq \bbP^n\) \cite{hwang08}. 
A conjecture of Matsushita predicts that this is always the case.

More generally, suppose \(X\) is any primitive symplectic variety admitting a 
Lagrangian fibration \(f: X \to B\). We abuse notation to denote \(f: X^\circ \to B^\circ\) 
the restriction of \(f\) to the preimage of the regular values, over which the fibers 
of \(f\) are smooth. It turns out that every smooth fiber \(X_b\) is an Abelian 
variety, due to Matsushita \cite{matsushita99} when \(X\) is projective hyper-K\"ahler 
and \cite{campana06} in more general settings.

Take \(X\) to have dimension \(2g\), so that both the base \(B\) and the fibers \(X_b\)
are \(g\)-dimensional. We denote by \(\cP: B^\circ \to \cA_{g, \eta}\) the period map 
associated to \(f\), where \(\cA_{g, \eta}\) is a moduli space for appropriately-polarized
Abelian varieties. If \(\cP\) is non-constant, that is, if \(f\) is not an isotrivial fibration,
then \(\cP\) is generically injective; this is due to \cite{geemen-voisin16} for 
\(X\) a very general hyper-K\"ahler and \cite{bakker25} for primitive symplectic \(X\).
In the non-isotrivial case, we say that \(f\) varies maximally.

Our main technical result provides some description of the image of \(\cP\). Since 
\(\cA_{g, \eta}\) is a Shimura variety; explicitly,
\begin{equation*}
    \cA_{g, \eta} = \leftQ{\rightQ{\Sp_{2g}(\R)}{U(g)}}{\Gamma},
\end{equation*}
some ``special'' subvarieties of \(\cA_{g, \eta}\) are induced by algebraic 
subgroups \(G < \Sp_{2g}\). This subgroup is the Mumford--Tate group of the
very general fiber of \(f\). Our result is a full classification of Mumford--Tate 
groups which can occur in a Lagrangian fibration's very general fiber. 

\begin{theorem}[=\Cref{mt-classification}] 
    Let \(X\) be a primitive symplectic variety of dimension \(2g\), and let 
    \(f: X \to B\) be a Lagrangian fibration. The Mumford--Tate group of 
    \(f\) is isogenous to one of 
    \begin{equation*}
        \mathbf{MT}(X_b) \sim \begin{cases}
            (\mathbb{G}_m \times \Sp_{2g/k}^k) / \{\pm 1\} & : f \text{ varies maximally, } 
            k \text{ divides g,}\\
            \mathrm{Res}_{K/\Q} \mathbb{G}_{m, K} & : 
            f \text{ is isotrivial with fiber isogenous to } E^g \text{ for } \\
            & \quad  E \text{ a CM elliptic curve with endomorphism field } K\\
            \GL_2 & : \text{ otherwise.}
        \end{cases}
    \end{equation*}
\end{theorem}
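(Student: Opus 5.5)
The strategy is to study the variation of Hodge structure \(\mathbb{V} = R^1 f_* \Q\) on \(B^\circ\). The key inputs are two facts. First, the monodromy representation of \(\pi_1(B^\circ)\) on a fiber, together with the Mumford--Tate group \(\mathbf{MT}(X_b)\) at a very general point, constrains the structure. Second, the Lagrangian condition gives a canonical isomorphism \(T_b B \simeq H^0(X_b, T_{X_b}) \simeq H^1(X_b,\mathcal{O})^{\vee}\) (contraction with \(\sigma\)), under which the derivative of the period map is a \emph{symmetric} cubic form. Concretely, the differential \(d\cP_b : T_bB \to \mathrm{Sym}^2 H^{1,0}(X_b)^\vee\) is injective in the maximally varying case, because \(\cP\) is generically injective by \cite{bakker25}, and it comes from a cubic form. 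I will use the theorem of André and Deligne: the connected monodromy group \(H\) is a normal subgroup of the derived group of \(\mathbf{MT}\) at a very general point. Moreover, the image of \(\cP\) lies in the special subvariety defined by \(\mathbf{MT}\), and is contained in a product decomposition according to the simple factors.

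\textbf{Isotrivial case.} Here \(H\) is trivial and the period map is constant. The cases then split according to the endomorphism algebra of the fixed fiber. The goal is to show that the fiber is isogenous to \(E^g\), so that \(\mathbf{MT}\) is that of an elliptic curve: \(\mathrm{Res}_{K/\Q}\mathbb{G}_m\) in the CM case, or \(\GL_2\) otherwise. For this I would use that an isotrivial Lagrangian fibration becomes trivial after a finite base change. I would also use that the second cohomology of a primitive symplectic variety carries a single transcendental piece coming from \(\sigma\), with \(\sigma\) restricted to \(H^1(X_b)\otimes H^1(X_b)\) via the cup product on \(X_b \times B\) in a finite cover. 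This should force \(H^1(X_b)\simeq H^1(E)\otimes W\) with \(W\) a trivial Hodge structure of dimension \(g\). Here \(E\) is a weight-one piece determined by the \(\sigma\)-direction: \(H^{1,0}(X_b)\) is spanned by \(\sigma\)-contractions of \(T^*B\), which is a tensor product with a rank-\(2\) piece. So \(\mathbf{MT}(X_b)=\mathbf{MT}(E)\).

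\textbf{Maximally varying case.} I would first show that \(\mathbb{V}\) is irreducible as a \(\Q\)-VHS, or at least isotypic. The reason is that \(H^2(X)\) restricts to \(\wedge^2 H^1(X_b)\) with image of rank essentially one plus the polarization, via the global invariant cycle theorem. A decomposition \(\mathbb{V}=\mathbb{V}_1\oplus\mathbb{V}_2\) into non-isogenous pieces would give two independent polarizations, contradicting that the restriction image is one-dimensional (\(\eta\) only). So \(\mathbb{V}\simeq \mathbb{W}^{\oplus m}\) up to isogeny, and \(\mathbf{MT}\) is a \(\Q\)-form of a group of Hermitian type acting on \(W\), with \(\mathrm{End}\) some division algebra \(D\).

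Next I would use symmetric cubic forms to rule out Hermitian-type groups other than products of \(\Sp\). For unitary types \(\mathrm{SU}(p,q)\), or for orthogonal types coming from quaternion algebras, the horizontal tangent space of the period domain is \(\Hom(V^{1,0}_i, V^{0,1}_i)\) split across eigenspaces. The requirement that \(T_bB \simeq H^{1,0}\) embeds with a totally symmetric cubic, rather than just a symmetric quadratic form, fails by a dimension or symmetry count. Thus the real group is \(\Sp_{2n}(\R)^k\), with \(\Q\)-structure \(\mathrm{Res}_{F/\Q}\Sp\) for a totally real field \(F\) of degree \(k\), or products thereof. Maximal variation plus \(\dim B = g\) forces \(\dim\) of the period domain \(\geq g\), which should pin \(n = g/k\). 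Over \(\C\) the group then becomes \(\Sp_{2g/k}^k\), giving the stated isogeny class. I should double-check that "isogenous" here is meant geometrically, which absorbs the \(\mathrm{Res}_{F/\Q}\) and the \(\mathbb{G}_m\) center.

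\textbf{Intermediate case.} A non-isotrivial \(\mathbb{V}\) whose monodromy is smaller than its Mumford--Tate group is treated the same way. Here I expect the hardest step, because it requires ruling out every intermediate possibility where the period map is non-constant but the image is not of full dimension. The plan is to use generic injectivity: either \(\cP\) is constant or its image has dimension \(g\), which puts us back in the previous case. The \(\GL_2\) option arises only when \(\cP\) is constant, and then the fiber is \(E^g\) with \(E\) non-CM. The main obstacle is the symmetric-cubic exclusion of the non-symplectic Hermitian types. For that I would compute the isotropy representation on \(\mathfrak{p}^-\) and check that no \(g\)-dimensional subspace is the image of a symmetric cubic.
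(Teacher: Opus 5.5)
Your proposal has a genuine gap at exactly the point the theorem is about. The tests you use to pin down the group are Satake's list of Hermitian types, the Donagi--Markman symmetric cubic, and the requirement that the period domain have dimension at least $g$. All of them depend only on the real group $\mathbf{MT}(X_b)(\R)$ and its isotropy action. So they cannot distinguish $\Sp_{2h}^k$ from a non-split $\Q$-form such as $\mathrm{Res}_{F/\Q}\Sp_{2h}$ with $F$ totally real of degree $k$, i.e.\ the real-multiplication loci. Both have domain $(\bbH_h)^k$ and horizontal tangent space $\bigoplus_i \mathrm{Sym}^2A_i^\vee$, and both admit symmetric cubics $\sum_i c_i$ with $c_i\in\mathrm{Sym}^3A_i^\vee$. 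Your $H^2$/invariant-cycle argument gives only that monodromy permutes the simple factors transitively. A family with generic real multiplication, whose monodromy acts on $F$ through Galois automorphisms, also satisfies this.

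You propose to absorb the issue by reading ``isogenous'' geometrically, but the statement is over $\Q$. It lists the non-split torus $\mathrm{Res}_{K/\Q}\mathbb{G}_{m,K}$, and its corollary asserts that the factors $A_i$ have no extra endomorphisms. Excluding real multiplication, quaternionic forms $\Sp_r(\mathbf{D})$, and Mumford-type tensor representations is the whole content of the theorem.

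The missing idea is arithmetic input from degenerations:
\begin{enumerate}
\item \Cref{local-monodromy-eigenvalues}, which rests on the extension $\cT_B\simeq F^1\ol{\cV}$, shows that the local monodromy logarithm $N$ at a general point of a discriminant component has rank at most one.
\item \Cref{infinite-monodromy} produces a component where $N\neq 0$. Its proof shows the $g$-dimensional period image cannot lie in a compact special subvariety, via Grushevsky--Mondello--Manni--Tsimerman and Satake/Scharlau.
\item Since $N$ is $\Q$-rational, if one member of a Galois orbit $U_1,\dots,U_\ell$ of summands degenerates there, all do. Then $\mathrm{rk}\,N\geq \ell$, which forces $\ell=1$.
\end{enumerate}
The same rank count excludes tensor-product representations, and division-algebra forms, where $\mathrm{rk}\,N\geq\dim_\Q\mathbf{D}$.

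Two further steps fail as written. First, irreducibility of $\mathbb{V}$ as a local system does not make the Mumford--Tate representation at a very general point isotypic, so ``$\mathbf{MT}$ acts on $W$ with $\mathrm{End}=D$'' is not what you get. In \Cref{example-K3n} the VHS is irreducible, yet $V_b$ splits into $g$ pairwise non-isogenous pieces that become visible only on a finite cover. Second, the dimension count does not force $n=g/k$. A derived group $\Sp_4\times\SL_2$ acting on $\Q^4\oplus\Q^2$ with $g=3$ has a $4$-dimensional domain and passes your cubic test. Equal sizes come from transitivity of monodromy on the factors; in the paper this is irreducibility of $V_\C$ plus Clifford's theorem, and it is only usable once the factors are known to be defined over $\Q$.

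Your cubic idea does have real force for unitary types. There the tangent space lies in $\bigoplus A_\chi^\vee\otimes A_{\bar\chi}^\vee$, and a totally symmetric cubic all of whose contractions land there must vanish. But it does not touch the arithmetic issue above. For the isotrivial case your sketch is not yet an argument; the paper simply cites \cite{kim-laza-martin24} and the author's earlier work there.
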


There is a more geometric description of this result, which we provide as an 
obvious corollary. In particular, this generalizes \cite{matsushita99} and 
\cite{campana06} to describe the structure of the fibers. 

\begin{corollary}
    Suppose that \(f: X \to B\) is non-isotrivial. Then for some positive integer 
    \(k\) dividing \(g\), the very general fiber \(X_b\) is isogenous to a product 
    \(A_1 \times \cdots \times A_k\), where the \(A_i\) are Hodge-generic Abelian 
    varieties of dimension \(g/k\), which are not pairwise isogenous. 
    In particular, each \(A_i\) is simple and has no extra endomorphisms.
\end{corollary}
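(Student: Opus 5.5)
The corollary follows from the first case of the classification theorem (\Cref{mt-classification}): since \(f\) is non-isotrivial, it varies maximally, so the very general fiber has \(\mathbf{MT}(X_b) \sim (\mathbb{G}_m \times \Sp_{2g/k}^k)/\{\pm 1\}\) for some \(k \mid g\). The plan is to translate this group-theoretic statement into a decomposition of the rational Hodge structure \(V = H^1(X_b, \Q)\), and then to read off the isogeny decomposition of \(X_b\) and the endomorphism algebra.

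First, I will determine the representation of the Mumford--Tate group on \(V\). The group \(\Sp_{2g/k}^k\) sits inside \(\Sp_{2g}\) acting on the \(2g\)-dimensional space \(V_\C\). Its derived group is \(\Sp_{2g/k}^k\), which is semisimple. We need \(V\) to decompose as an orthogonal direct sum \(V = V_1 \oplus \cdots \oplus V_k\), with the \(i\)-th factor acting by its standard representation on \(V_i\) and trivially on the rest. I expect this to be the main obstacle, since "isogenous to" only fixes the abstract group. To handle it, I will go back to the proof of \Cref{mt-classification}, which presumably constructs the group as a product of symplectic groups of an orthogonal splitting of \(V\). Failing that, I will argue from dimensions. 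A faithful \(2g\)-dimensional symplectic representation of \(\Sp_{2g/k}^k\) whose weights are those of a weight-one Hodge structure (minuscule, only weights \(\pm 1\) for the Hodge cocharacter) must be a sum of tensor products of standard representations. By the dimension count \(k \cdot (2g/k) = 2g\), it is exactly the sum of the \(k\) standard representations, one per factor. A priori the \(\Q\)-structure could permute the factors, i.e.\ the group could be a restriction of scalars \(\mathrm{Res}_{F/\Q}\Sp_{2g/k}\). I will rule this out using the precise form stated in the theorem, a split product over \(\Q\), as established in its proof.

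Given the decomposition, each \(V_i\) is a \(\mathbf{MT}\)-subrepresentation defined over \(\Q\), hence a sub-Hodge structure. Sub-Hodge structures of \(H^1\) correspond to abelian subvarieties up to isogeny, so \(X_b \sim A_1 \times \cdots \times A_k\) with \(H^1(A_i,\Q) = V_i\) and \(\dim A_i = g/k\). The Mumford--Tate group of \(A_i\) is the image of \(\mathbf{MT}(X_b)\) in \(\GL(V_i)\), namely \(\mathbb{G}_m \cdot \Sp_{2g/k} = \mathrm{GSp}_{2g/k}\). This is the largest possible group, so each \(A_i\) is Hodge-generic.

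Finally, endomorphisms are the \(\mathbf{MT}\)-invariants. We have \(\mathrm{End}(A_i)_\Q = \mathrm{End}_{\mathrm{GSp}}(V_i) = \Q\), since the standard representation is absolutely irreducible. So \(A_i\) is simple and has no extra endomorphisms. Moreover, \(\Hom(A_i, A_j)_\Q = \Hom_{\mathbf{MT}}(V_i, V_j) = 0\) for \(i \neq j\), because distinct factors \(\Sp_{2g/k}\) act nontrivially on \(V_i\) but trivially on \(V_j\). Hence the \(A_i\) are pairwise non-isogenous.
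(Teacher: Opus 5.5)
Your proposal is correct and follows the paper's route. The paper treats this as an immediate consequence of the classification, using the $\Q$-rational splitting $V_\Q = U_1 \oplus \cdots \oplus U_k$ from the proof of the main theorem, with each factor $\Sp_{2g/k}$ acting by its standard representation on its own summand — exactly what you propose to extract. Your deduction of pairwise non-isogeny from $\Hom_{\mathbf{MT}}(V_i, V_j) = 0$ is a clean substitute for the paper's degeneration argument that $U_i \ncong U_j$, and your dimension-count fallback becomes unnecessary once that $\Q$-splitting is taken from the proof.
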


As a direct consequence of this classification, we answer a question of 
Li--Tosatti. Their result \cite[Corollary 2.2]{li-tosatti24} shows that 
if \(f\) is not isotrivial, the special K\"ahler metric \(\omega_\mathrm{SK}\) on 
\(B^\circ\) has positive Ricci curvature. In a later remark, they state their stronger
expectation that \(\omega_\mathrm{SK}\) should have positive holomorphic sectional 
curvature on \(B^\circ\). The following result proves them right.

\begin{proposition}[=\Cref{constant-diagonal-entry}]
    Let \(X\) be primitive symplectic, and \(f: X \to B\) be a Lagrangian fibration 
    with period map \(\cP: B^\circ \to \cA_{g, \eta}\) and period matrix \(\mathbf{T}\), written as
    \begin{equation*}
        \cP(b) = \begin{pmatrix}
            \mathbf{T}(b) \\ I_g
        \end{pmatrix},\qquad \mathbf{T} = \begin{pmatrix}
            \tau_{11} & \cdots & \tau_{1g} \\ 
            \vdots & \ddots & \vdots \\ 
            \tau_{1g} & \cdots & \tau_{gg}
        \end{pmatrix}
    \end{equation*}
    Suppose that \(\mathbf{T}\) has a constant diagonal entry \(\tau_{ii}\)
    in some analytic neighborhood \(U \subseteq B^\circ\). Then \(f\) is isotrivial.
\end{proposition}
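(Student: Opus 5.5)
Suppose $\tau_{ii}$ is constant on $U$ and $f$ is not isotrivial; we derive a contradiction by combining the classification of Mumford--Tate groups (\Cref{mt-classification}) with the generic injectivity of $\cP$. By \Cref{mt-classification}, $\mathbf{MT}(X_b)\sim(\mathbb{G}_m\times\Sp_{2g/k}^k)/\{\pm1\}$ for some $k\mid g$, and the very general fiber is isogenous to $A_1\times\cdots\times A_k$ with each $A_i$ Hodge-generic of dimension $g/k$. Since $\cP$ is generically injective, the image $\cP(B^\circ)$ is $g$-dimensional. It lies in the special subvariety $S$ attached to $\mathbf{MT}(X_b)$, which is (up to isogeny) a product $\cA_{g/k}^k$ of dimension $k\cdot\frac{(g/k)(g/k+1)}{2}$.

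The key structural input I will extract is that the image of the period map is Hodge-generic in $S$. I expect this to be, in effect, what \Cref{mt-classification} provides. In particular, the projection of $\cP(U)$ to each factor $\cA_{g/k}$ is a family whose monodromy is Zariski dense in $\Sp_{2g/k}$. Consequently, no nonzero holomorphic function of the form $\tau_{ii}$ can be constant on an open piece unless it is constant on all of $S$. Concretely, the condition ``$\tau_{ii}$ is constant'' is real-analytic and holomorphic in the period domain, and it defines a hypersurface in $\mathfrak{H}_g$ (or is trivial). If $\cP(U)$ lay in it, then by analytic continuation and monodromy invariance, $\cP(B^\circ)$ would lie in the intersection over the monodromy group $\Gamma$ of all translates $\gamma\cdot\{\tau_{ii}=c\}$. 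Zariski density of the monodromy then forces that intersection to contain a $\Gamma$-invariant algebraic subvariety strictly smaller than $S$ and containing the image, which contradicts Hodge-genericity.

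To make this rigorous, I will pass to the algebraic-group level. The function $\tau_{ii}$ equals the ratio of pairings $\langle \omega, e_i\rangle/\langle \omega, f_i\rangle$-type coordinates. Its constancy means that a specific vector in the Hodge structure (a combination of the symplectic basis vectors $e_i, f_i$) satisfies a fixed linear relation with the $(1,0)$-part over $U$, hence over all of $B^\circ$ by analytic continuation. Transporting by monodromy, this relation holds for $\gamma e_i,\gamma f_i$ for all $\gamma\in\Gamma$. Since $\Gamma$ is Zariski dense in the derived group $\Sp_{2g/k}^k$, these translates span the whole rational lattice, or at least a full factor. That forces the Hodge filtration in that factor to be constant, so the factor $A_j$ is isotrivial, contradicting its Hodge-genericity and the maximal variation.

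The main obstacle is the monodromy step. The relation ``$\tau_{ii}=c$'' is not linear in the filtration. It is a condition on the ratio of two periods, so it does not obviously propagate under monodromy, which acts by fractional linear transformations $\mathbf{T}\mapsto(A\mathbf{T}+B)(C\mathbf{T}+D)^{-1}$. I would first try using a unipotent monodromy element or the irreducibility of $\Sp_{2g/k}$ acting on the relevant factor, showing that the locus $\{\mathbf{T}:\tau_{ii}(\gamma\mathbf{T})=c_\gamma\ \forall\gamma\}$ is a proper special subvariety. Failing that, I would use a curvature argument. Constancy of $\tau_{ii}$ makes the second fundamental form vanish in a direction, and for Hodge-generic images in $\cA_{g/k}^k$ (the case $k=g$ giving a product of modular curves) this degeneracy is impossible.
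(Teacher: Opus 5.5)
The gap is the step you yourself flag as the ``main obstacle,'' and neither mechanism you offer for it works. First, the claimed contradiction with Hodge-genericity is not a contradiction. Hodge-genericity in $S$ only excludes proper \emph{special} subvarieties. By contrast, $\bigcap_{\gamma\in\Gamma}\gamma\cdot\{\tau_{ii}=c\}$ is merely a $\Gamma$-stable analytic subset of $\bbH_g$, with no reason to descend to an algebraic subvariety of $S$. Moreover, for $h=g/k\geq 2$ one has $\dim S=k\binom{h+1}{2}>g$. So the image already lies in a proper algebraic subvariety of $S$ (its Zariski closure), and ``contained in something strictly smaller than $S$'' yields nothing. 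Second, the linearization in your ``rigorous'' paragraph is false for $g\geq 2$. Write $a_1,\dots,a_g,b_1,\dots,b_g$ for the symplectic basis used to write $\cP$. Then $\tau_{ii}$ is a ratio of $g\times g$ minors of the period matrix, and $\tau_{ii}=c$ says only that $F^1$ meets a fixed $g$-dimensional subspace $W_c\subseteq V_\C$ nontrivially. The witnessing vector is, up to scalar, the $i$-th column $w_i=\sum_l\tau_{li}a_l+b_i$, which moves with the point. So there is no fixed vector whose $\Gamma$-translates you could span, which is why the relation does not propagate. Separately, Hodge-genericity alone does not give Zariski-dense monodromy on each factor, since a constant factor can be Hodge-generic. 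That needs the transitivity of monodromy on the $U_j$, which comes from the irreducibility of $V_\C$.

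The paper manufactures a fixed vector from the Lagrangian structure, which your argument never uses. By Donagi--Markman symmetry, $\partial\tau_{l1}/\partial z_1=\partial\tau_{11}/\partial z_l=0$, so the whole column $w_1$ is constant along the curve $C_p$ obtained by integrating $\partial/\partial z_1$. This gives a fixed Hodge substructure $E$ along $C_p$, hence an atypical positive-dimensional intersection of the $g$-dimensional period image with the codimension-$g$ locus $\{E\}\times\bbH_{g-1}$. Bakker--Tsimerman's Ax--Schanuel theorem then places the image in a weakly special subvariety, which the normal-subgroup structure of $\mathbf{MT}(X_b)$ from the classification rules out.

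Your monodromy idea can in fact be completed, but in the compact dual $\check{D}$ (the Lagrangian Grassmannian of $V_\C$) rather than in $S$. The locus $\{\tau_{ii}=c\}$ is the trace on $\bbH_g$ of an algebraic hypersurface $\check{Z}\subset\check{D}$. Analytic continuation on the universal cover and equivariance put the lifted image inside the algebraic, $\Gamma$-stable set $\bigcap_\gamma\gamma\check{Z}$. That set is then stable under the Zariski closure of $\Gamma$, so, using the transitivity above, the orbit of $\mathrm{Mon}(V_\Q)^\circ(\C)=\prod_j\Sp(U_{j,\C})$ through $F^1_y$ lies in $\check{Z}$. This orbit is $\prod_j\mathrm{LG}(U_{j,\C})$, with tangent space $\bigoplus_j\mathrm{Sym}^2F_j^\vee$ at $F^1_y=\bigoplus_jF_j$. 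On it, $d\tau_{ii}$ restricts, up to normalization, to $\xi\mapsto\sum_j\xi_j(w_i^{(j)},w_i^{(j)})$, where $w_i^{(j)}\in F_j$ is the $U_j$-component of $w_i$. This is not identically zero because $w_i\neq 0$, which gives the contradiction. As written, however, your proposal contains neither this argument nor the paper's for the decisive step.
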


By \cite[Remark 2.3]{li-tosatti24}, \(\omega_\mathrm{SK}\) having positive holomorphic 
sectional curvature is equivalent to \(\mathbf{T}\) having non-constant diagonal
entries, as needed. Furthermore, we will see that the proof of \Cref{constant-diagonal-entry} 
tells us a fair bit about the existence of any constant entry in the period matrix.

\begin{corollary}[=\Cref{omega_SK}]
    Let \(X\) be primitive symplectic, let \(f: X \to B\) be a Lagrangian fibration, 
    and suppose that \(f\) is not isotrivial. Then \(\omega_\mathrm{SK}\) has positive 
    holomorphic sectional curvature. If we suppose further that 
    \(\mathbf{MT}(X_b) = \mathrm{GSp}_{2g}\), then \(\omega_\mathrm{SK}\) has 
    positive bisectional curvature. 
\end{corollary}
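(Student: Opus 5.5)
This corollary follows from \Cref{constant-diagonal-entry} together with the classification in \Cref{mt-classification}. I would not recompute any curvature. Instead I would use \cite[Remark 2.3]{li-tosatti24}, which translates curvature positivity of \(\omega_\mathrm{SK}\) into a statement about the period matrix \(\mathbf{T}\). Holomorphic sectional curvature of \(\omega_\mathrm{SK}\) at a point \(b\) in a direction \(v\) is, up to a positive factor, a sum of the form \(\sum |\partial_v \tau_{ij}|^2\)-type expressions. After a symplectic change of the frame \(\mathbf{T}\mapsto {}^tA\,\mathbf{T}A\), it reduces to the non-vanishing of the derivative of a diagonal entry along the relevant curve.

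For the first claim, suppose \(\omega_\mathrm{SK}\) fails to have positive holomorphic sectional curvature. By \cite[Remark 2.3]{li-tosatti24}, some diagonal entry \(\tau_{ii}\) is then constant on an analytic open set \(U\subseteq B^\circ\), after a suitable choice of symplectic basis. I will need to check that the change of frame is allowed, i.e. that it preserves the form \(\cP=\binom{\mathbf{T}}{I_g}\); presumably that is how the Remark is phrased. \Cref{constant-diagonal-entry} then forces \(f\) to be isotrivial, which contradicts the hypothesis.

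For the second claim, assume \(\mathbf{MT}(X_b)=\mathrm{GSp}_{2g}\), so the period map is Hodge-generic. Positivity of bisectional curvature is equivalent, again by Li--Tosatti's computation, to the non-vanishing of expressions of the form \(\partial_v\partial_w\)-bilinear combinations of the entries. Its failure amounts to some off-diagonal entry \(\tau_{ij}\), in a suitable basis, being constant on an open set. So I would revisit the proof of \Cref{constant-diagonal-entry}, which the paper indicates also controls arbitrary constant entries.

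\textbf{Main obstacle: the off-diagonal case.} A constant entry is an algebraic condition on the period image, cut out by equations. I would pass to the Zariski closure of the image of the period map in \(\cA_{g,\eta}\), or rather to its preimage in \(\Sp_{2g}(\R)/U(g)\). The Ax--Schanuel/monodromy argument then says this closure is the special subvariety attached to \(\mathbf{MT}(X_b)\); for \(\mathrm{GSp}_{2g}\) this is the whole Siegel space. Since the image of \(\cP\) has dimension \(g\), I cannot use Zariski density of the image directly. Instead I would use that the algebraic monodromy is all of \(\Sp_{2g}\) and is normalized by the relation coming from the constant entry. Specifically, the set of monodromy elements preserving the locus \(\{\tau_{ij}=c\}\) must contain the whole monodromy group, but \(\Sp_{2g}\) does not stabilize any such hypersurface. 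This yields a contradiction, and hence positive bisectional curvature.
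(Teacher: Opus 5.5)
Your first claim is handled exactly as in the paper: Li--Tosatti's Remark~2.3 turns a failure of positivity into a locally constant diagonal entry, and \Cref{constant-diagonal-entry} then gives isotriviality. For the second claim your route is genuinely different. The paper gets it from the remark that the proof of \Cref{constant-diagonal-entry} works for any constant entry. By \eqref{eq-donagi-markman}, a constant \(\tau_{ij}\) makes a column of \(\mathbf{T}\) constant along the integral curves of \(\partial/\partial z_i\). This gives an atypical intersection and, via Bakker--Tsimerman, a proper weakly special subvariety containing the period image, which \(\mathrm{GSp}_{2g}\) forbids.

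You instead use only Zariski density of the monodromy in \(\Sp_{2g}\). This avoids Donagi--Markman and functional transcendence entirely, and it is more flexible. It rules out local constancy of \({}^tv\,\mathbf{T}\,w\) for any nonzero constant complex \(v,w\), since \(\tau\mapsto{}^tv\,\tau\,w\) is then a nonzero linear form on symmetric matrices. Moreover \(\partial({}^tv\,\mathbf{T}\,w)/\partial z_k=C(v,w,e_k)\) for the cubic form \(C_{ijk}=\partial\tau_{ij}/\partial z_k\), which is exactly what governs the bisectional curvature in the directions \(v,w\). So for this claim you never need the symplectic change of frame you worried about. That change is in general unavailable for complex \(v,w\); the issue remains for the first claim, in your proposal as in the paper. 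The cost is that your argument needs density in all of \(\Sp_{2g}\). It cannot replace \Cref{constant-diagonal-entry} in the product cases, where constant off-diagonal entries really occur (\Cref{example-K3n}).

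As written, however, your decisive sentence is wrong. Nothing forces a monodromy element to preserve the hypersurface \(\{\tau_{ij}=c\}\): the image lying in it only puts the image inside the intersection of all its monodromy translates. What monodromy does preserve is the lifted period image. The argument should run as follows.

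\begin{itemize}
\item Let \(\wt{\cP}\) be the lift of \(\cP\) to the universal cover \(\wt{B^\circ}\). By the identity theorem, \(\tau_{ij}\circ\wt{\cP}\equiv c\) on all of \(\wt{B^\circ}\).
\item By equivariance, \(\wt{\cP}(\wt{B^\circ})\) is stable under \(\rho(\pi_1(B^\circ))\). Hence its Zariski closure \(Z\) in the compact dual \(\mathrm{LG}(g,2g)\) lies in the closure of \(\{\tau_{ij}=c\}\) and is stable under the Zariski closure of the monodromy.
\item That closure contains \(\Sp_{2g}(\C)\). By Andr\'e, \(\mathrm{Mon}(V_\Q)^\circ\) is normal in \(\Sp_{2g}\), and it is nontrivial because \(f\) is not isotrivial; you assert full monodromy without this justification.
\item Since \(\Sp_{2g}(\C)\) acts transitively on \(\mathrm{LG}(g,2g)\), \(Z\) is everything, a contradiction.
\end{itemize}

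Your worry that the \(g\)-dimensionality of the image blocks a density argument is therefore misplaced. The image is not dense in \(\cA_{g,\eta}\), but its lift is Zariski dense in the compact dual, which is all you need; no Ax--Schanuel is involved.

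One caveat applies equally to the paper. The translation ``positive curvature \(\Leftrightarrow\) no locally constant entry'' only holds if ``positive'' means ``not identically zero on an open set along any pair of flat directions.'' Pointwise, \(R(v,\bar v,w,\bar w)\) is a positive multiple of \(\|C(v,w,\cdot)\|^2\). For \(g\geq 2\), the kernel of \(s\mapsto C_p(s,\cdot)\) on \(\mathrm{Sym}^2T_pB\) has dimension at least \(g(g-1)/2\), so it meets the \((2g-1)\)-dimensional cone of products \(v\cdot w\) away from \(0\). Thus some bisectional curvature vanishes at every point, and you should state the intended reading explicitly.
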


It turns out that the deformation space of a  polarized hyper-K\"ahler manifold \((X, H)\)
is a Shimura variety. We can construct it as follows: take 
\(L = H^\perp \subsetneq H^2(X, \Z)\) with respect to the Beauville--Bogomolov--Fujiki 
form \(q\), and define 
\[\mathcal{D}_H = \{[\sigma] \in \bbP L_\C \mid q(\sigma, \sigma) = 0, \quad
q(\sigma, \ol\sigma) > 0\}.\]
Then the deformation space of \(X\) is constructed as an open subset 
\begin{equation*}
    \mathcal{X} \subseteq \leftQ{\mathcal{D}_H}{\Gamma} = 
    \leftQ{\rightQ{\mathrm{SO}(2, b_2-3)}{\mathrm{SO}(2) \times \mathrm{SO}(b_2-3)}}{\Gamma},
\end{equation*}
where \(b_2 = \dim H^2(X, \C)\).

As such, to construct a deformation space of hyper-K\"ahler manifold, it suffices 
to exhibit one member. There are currently four known deformation types: the infinite 
families \(K3^{[g]}\) and \(\mathrm{Kum}_g\), occurring in every even dimension \(2g\),
and the two sporadic types \(OG6\) and \(OG10\), occurring in dimensions 6 and 10 
respectively. 

Every known deformation space \(\mathcal{X}\) contains a member \(X\) admitting a 
Lagrangian fibration \(f: X \to B\). Furthermore, the hyper-K\"ahler SYZ conjecture 
predicts that such a member exists in every deformation class of hyper-K\"ahler 
manifolds. Matsushita subsequently shows that a codimension one collection of 
deformations \(X'\) of \(X\) preserve the Lagrangian fibration \(f\) \cite{matsushita16}.

Thus, we have a notion of \(X\) being general among the members of its deformation 
class admitting a Lagrangian fibration. Our next result strengthens 
\Cref{mt-classification} in this setting, concluding that the very general fiber 
\(X_b\) is Hodge-generic.

\begin{theorem}[=\Cref{general-mt-classification}]
    Let \(\mathcal{X}\) be a deformation class of hyper-K\"ahler manifolds satisfying 
    \begin{enumerate}
        \item there exists \(X \in \mathcal{X}\) a hyper-K\"ahler manifold admitting a 
        Lagrangian fibration,
        \item the second Betti number of any member \(X \in \mathcal{X}\) is at least 7.
    \end{enumerate}
    Choose \(X \in \mathcal{X}\) general among those admitting Lagrangian fibrations,
    \(f: X \to B\). Then the Mumford--Tate group of the very general fiber \(X_b\) 
    is \(\mathrm{GSp}_{2g}\).
\end{theorem}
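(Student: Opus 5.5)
By \Cref{mt-classification}, the Mumford--Tate group of the very general fiber is already one of a short list. The plan is to rule out everything except \(k=1\) in the maximally varying case. First I will show that the general Lagrangian-fibered \(X\) is not isotrivial. Isotrivial fibrations should be confined to a proper closed (Noether--Lefschetz type) sublocus of the Matsushita codimension-one family \(\mathcal{D}_f \subseteq \mathcal{D}_H\) of deformations preserving \(f\). That sublocus would be cut out by extra Hodge classes forced on \(H^2(X)\) by the splitting of the fibers. This disposes of the CM and \(\GL_2\) cases, so we may take \(\mathbf{MT}(X_b) \sim (\mathbb{G}_m\times \Sp_{2g/k}^k)/\{\pm1\}\) and must show \(k=1\).

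The key step is to link the fiber's Hodge structure with \(H^2(X)\). Restriction gives a map \(H^2(X,\Q)\to H^2(X_b,\Q) = \wedge^2 H^1(X_b,\Q)\), and the variation of \(H^1(X_b)\) over \(B^\circ\) has monodromy group whose identity component is the derived group of \(\mathbf{MT}(X_b)\), namely \(\Sp_{2g/k}^k\). If \(k>1\), the decomposition \(H^1(X_b)\cong \bigoplus_i V_i\) is defined over a finite cover of \(B^\circ\). It produces monodromy-invariant classes \(\omega_i \in \wedge^2 V_i \subseteq H^2(X_b,\Q)\), namely the polarizations of the factors \(A_i\), which are linearly independent and of type \((1,1)\). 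By the global invariant cycle theorem, after passing to a finite cover if needed, these invariant classes come from \(H^2\) of a compactification. Up to a finite monodromy action permuting the factors, they should lift to Hodge classes of \(X\). One has to check that the invariant part of \(H^2\) of the fibers is spanned by the image of \(H^2(X)\), possibly after symmetrizing over the permutation. Since the image of \(H^2(X,\Q)\to H^2(X_b,\Q)\) is known to have rank one (spanned by the restriction of an ample class, by Matsushita/Voisin-type results), the \(\omega_i\) themselves cannot all lift. So the argument must instead use the variation of \(X\) within its deformation family.

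The more robust route is therefore to deform. The locus of \(X\) in \(\mathcal{D}_f\) whose very general fiber has \(k>1\) is a countable union of special (Hodge-theoretic) subvarieties. I will show each is a proper subvariety. For a given \(k>1\), the relative splitting \(A_1\times\cdots\times A_k\) over \(B\) yields a Hodge class on a fiber product or relative correspondence. Via the Lagrangian structure, where \(T_B \cong f_*\Omega_{X/B}\), this becomes a constraint on the holomorphic symplectic form, i.e.\ on the period \([\sigma]\in\mathcal{D}_f\). Concretely, the infinitesimal period map of \(f\) is given by a cubic form on \(T_B\) built from \(\sigma\) (the Donagi--Markman cubic). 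A splitting into \(k\) factors forces this cubic to decompose as a direct sum. I would show this is a closed condition on \(\sigma\) which fails somewhere in \(\mathcal{D}_f\). The intended witness is a very general member of a known family, for instance a Beauville--Mukai system on a K3 of Picard rank one, where fibers are Jacobians of very general curves and \(\mathbf{MT}=\mathrm{GSp}_{2g}\). This step needs a point of each relevant component of \(\mathcal{D}_f\), which is where the hypothesis \(b_2\ge 7\) enters.

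The main obstacle is turning ``the discrete invariant \(k\) is constant on the very general member'' into an honest Noether--Lefschetz argument without a known witness for every deformation class. My first attempt will exploit \(b_2\ge 7\) differently. On \(\mathcal{D}_f\), which has dimension \(b_2-3\ge 4\), the family of Lagrangian fibrations has a monodromy/Mumford--Tate group \(\mathrm{SO}(2,b_2-4)\)-like on \(H^2\), and \(k\) is deformation invariant away from a countable union of closed loci. Suppose \(k>1\) generically. The permutation action of the global monodromy on the \(k\) factors, together with the factor polarizations, would then give a nontrivial sub-Hodge structure of \(H^2(X)\) or \(H^4(X)\) stable under the full orthogonal monodromy. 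The orthogonal group has no such small invariant pieces once \(b_2-4\ge 3\), because the relevant representations are irreducible. Making this representation-theoretic comparison precise, via the LLV/Looijenga--Lunts decomposition of \(H^*(X)\) and pushing the fiber splitting into \(H^{2g}(X)\) through the class of a general fiber, is the hardest and least certain part of the plan.
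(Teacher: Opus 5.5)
There is a genuine gap: the proposal never rules out $k>1$, and once non-isotriviality is known that is the whole content of the theorem. You abandon the invariant-cycle route yourself, since the image of $H^2(X,\Q)\to H^2(X_b,\Q)$ has rank one. The Donagi--Markman/Hodge-locus route needs a point with $\mathbf{MT}(X_b)=\mathrm{GSp}_{2g}$ in every component of the Lagrangian-fibered deformations of every admissible deformation class. A Beauville--Mukai system supplies at most this for $K3^{[g]}$-type. The $\mathrm{SO}(2,b_2-4)$/LLV comparison is only a hope, with no identified sub-Hodge structure of $H^2(X)$ or $H^4(X)$ that the factor splitting would actually produce. In the paper, the hypothesis $b_2\ge 7$ is not used through any monodromy computation on $H^2(X)$; it is absorbed into the cited deformation-theoretic inputs. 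Your first step is also only heuristic: the claim that isotriviality is cut out by extra Hodge classes on $H^2(X)$ is not justified. The paper instead quotes \cite[Theorem 6.2]{kim-laza-martin24} for non-isotriviality of a general Lagrangian-fibered member.

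The missing idea is local, at the discriminant, rather than global on $H^2(X)$. Suppose $k>1$. Then over the finite Galois cover $B'\to B$ the variation splits over $\Q$ as $U_1\oplus\cdots\oplus U_k$, while $V_\C$ is irreducible on $B^\circ$ by \cite{varvak25}. So the covering group permutes the $U_i$ transitively. Since this group is a quotient of the monodromy group, it is generated by (conjugates of) local monodromies around components of $D$, so some local monodromy operator $\lambda$ permutes the factors nontrivially. At a general point of that component, a $c$-cycle of factors gives eigenvectors of $\lambda$ in the $F^1$-part of the limit mixed Hodge structure with eigenvalues $\zeta$ and $\bar\zeta$, for $\zeta$ a primitive $c$-th root of unity. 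Two disjoint cycles would give two independent such eigenvectors. But \Cref{local-monodromy-eigenvalues} allows at most one nontrivial eigenvalue, of multiplicity one, on that $F^1$-part. Hence $\lambda$ is a single transposition with $\lambda^2=\id$; this is \Cref{two-cycle-monodromy}. By the Hwang--Oguiso classification, the general singular fiber over that component is then of type $\mathrm{I}_0^*$. Lehn's theorem (\Cref{lehn-general-singular-fibers}) excludes this type for a general deformation preserving the fibration. This is where the generality of $X$ is really used: through the restriction on the types of general singular fibers, not through a Noether--Lefschetz locus or a witness in each component.
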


Once again, it is an obvious consequence that the very general fiber \(X_b\) is 
simple with no extra endomorphisms.

We prove these results by analyzing the monodromy of the variation of Hodge structures 
underlying \(f\). In the process of proving \Cref{mt-classification}, we also 
produce the following consequence.

\begin{theorem}[=\Cref{codim-one-discriminant}]
    Let \(X\) be a primitive symplectic variety, and \(f: X \to B\) a Lagrangian 
    fibration. Let \(D \subseteq B\) be the discriminant of \(f\). Then \(D\) has 
    codimension one.
\end{theorem}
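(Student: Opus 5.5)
We argue by contradiction. Suppose the discriminant \(D \subseteq B\) has codimension at least two. Then \(B^\circ = B \setminus D\) is the complement of a codimension-\(\geq 2\) analytic subset of the normal variety \(B\). The plan is to show that this forces the local system \(R^1f_*\Z\) on \(B^\circ\) to be trivial after a finite cover, which we will see is absurd.

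First, the monodromy. Near a general point of \(B\) the variety is smooth, and removing codimension-two subsets there does not change fundamental groups. So \(\pi_1(B^\circ)\) is a quotient of \(\pi_1\) of the smooth locus of \(B\), with the loops around the singularities of \(B\) adjoined. Since \(B\) is a normal projective variety with klt (in fact rationally connected, Fano-type) singularities, its smooth locus should have finite fundamental group up to the local groups. The cleanest route is to apply the results on fundamental groups of klt spaces, in the form stating that the smooth locus of a rationally connected klt variety has a finite-index subgroup generated by local contributions that are finite. Then the monodromy representation \(\rho: \pi_1(B^\circ) \to \Sp_{2g}(\Z)\) has finite image. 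Alternatively, one can note that the local monodromy around a klt singular point of \(B\) is finite, since local fundamental groups of klt singularities are finite, hence quasi-unipotent and of finite order.

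Second, finite monodromy implies isotriviality. Passing to the finite étale cover of \(B^\circ\) trivializing \(\rho\), the period map lifts to a map to the Siegel space \(\Sp_{2g}(\R)/U(g)\), a bounded domain. The cover extends to a finite cover of the compact normal space \(B\), because it is branched only in codimension \(\geq 2\) plus over \(B \setminus B^\circ\). The lifted period map therefore extends by Hartogs/Riemann extension across codimension-two loci to a holomorphic map from a compact space into a bounded domain, which is constant. Hence \(f\) is isotrivial with finite monodromy. Equivalently, \(\mathbf{MT}\) reduces to the generic Mumford--Tate group with trivial monodromy.

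Third, we rule out the isotrivial case with finite monodromy and no discriminant in codimension one. After a finite base change, \(X\) becomes birational to a quotient of \(A \times \wt B\) by a finite group, where \(A\) is the fiber. The symplectic form then pairs \(H^0(\Omega^1_A)\) with the cotangent directions of \(\wt B\). This gives holomorphic 1-forms on a smooth-in-codimension-two model, hence nonzero reflexive 1-forms on \(X\) or on a quasi-étale cover. This contradicts \(H^1(X,\mathcal{O}_X)=0\) together with primitivity: on a primitive symplectic variety, reflexive 1-forms vanish even on quasi-étale covers, by the Bakker--Lehn/Greb--Kebekus--Peternell results. More concretely, the translation action of \(A\) (via \(\sigma\) identifying \(f^*T^*_B\) with the vertical tangent) globalizes over \(B\) because there is no codimension-one degeneration. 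This yields nonzero vector fields on \(X\) and hence, via \(\sigma\), nonzero reflexive 1-forms.

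The main obstacle is the first step, controlling \(\pi_1(B^\circ)\) near the singularities of \(B\). If the klt fundamental group input is unavailable, the fallback is to work directly over the codimension-one smooth locus and use that any loop in \(B^\circ\) is contractible in \(B^{\mathrm{reg}}\) modulo finite local groups. This gives quasi-unipotent, hence virtually unipotent, monodromy, and polarizability then forces it to be finite.
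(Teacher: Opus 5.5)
\textbf{The third step fails, and no repair is possible at this generality.} You claim that reflexive $1$-forms vanish on every quasi-\'etale cover of a primitive symplectic variety. That is the defining property of irreducible symplectic varieties in the Greb--Kebekus--Peternell sense. Primitive symplectic varieties also include finite quotients of complex tori. Likewise, the fiberwise translation action descends to $X$ only if the finite monodromy fixes the corresponding vectors, and nothing forces this.

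Here is an explicit case where the isotrivial, codimension $\geq 2$ situation you want to exclude actually occurs:
\begin{itemize}
\item Let $\Gamma_0=Q_8\times\mu_3$ act on $W=\C^2$, with $Q_8\subset\mathrm{SU}(2)$ acting irreducibly and $\mu_3$ acting by scalars.
\item Identify $W$ with the quaternions, with complex structure given by right multiplication by $(i+j+k)/\sqrt{3}$. Then $\Gamma_0$ preserves the Hurwitz order $\Lambda$, and dually a lattice $\Lambda'\subset W^\vee$.
\item No non-trivial element of $\Gamma_0$ has eigenvalue $1$ on $W$ or on $W^\vee$. Also $(W\oplus W^\vee)^{\Gamma_0}=0$, $\Lambda^2W=\det$ is a non-trivial character, and $\mathrm{End}_{\Gamma_0}(W)=\C$.
\item Hence $X=(W/\Lambda\times W^\vee/\Lambda')/\Gamma_0$, with the form induced by the canonical pairing, has isolated symplectic quotient singularities, $h^1(\mathcal{O}_X)=0$, and a unique symplectic form. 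So it is primitive symplectic in the paper's sense.
\item The second projection is a Lagrangian fibration onto the surface $B=(W^\vee/\Lambda')/\Gamma_0$. Its singular fibers lie only over the finitely many points with non-trivial stabilizer, so $D$ has codimension two.
\end{itemize}
In this example your Steps 1 and 2 hold, since the monodromy is $\Gamma_0$. Step 3 is exactly what breaks: the quasi-\'etale cover has $1$-forms and vector fields, while $X$ has none. So the isotrivial case needs an additional hypothesis. Note that the paper's own proof reaches that case only through the Hwang--Oguiso dichotomy ``$D$ is empty or a divisor'', which is a result about symplectic manifolds, and then excludes $D=\emptyset$ by K\"unneth.

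\textbf{Your first step is also unsupported, and it carries the content of the non-isotrivial case.} Nothing makes $B$ of Fano type. In the example above, $B$ is a rational klt surface with $K_B$ torsion, and $\pi_1(B^{\mathrm{reg}})$ contains $\Z^4$ with finite index. So the finiteness statement you quote fails for rationally connected klt varieties. Moreover, $\pi_1(B^{\mathrm{reg}})$ being normally generated by loops with finite local monodromy does not bound the monodromy image. Groups generated by torsion elements, such as $\SL_2(\Z)$, can be Zariski dense. Hence your fallback ``quasi-unipotent, hence virtually unipotent'' does not follow.

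The paper never controls $\pi_1(B)$. In the maximally varying case it argues Hodge-theoretically. If the period map never reached the Baily--Borel boundary, its $g$-dimensional image would lie in a compact special subvariety of $\cA_{g,\eta}$. Such subvarieties are excluded case by case using Grushevsky--Mondello--Salvati Manni--Tsimerman and Satake's classification (\Cref{infinite-monodromy}). This yields a divisorial component of $D$ with infinite local monodromy. A topological argument on $B$ alone cannot see this, because compact subvarieties of $\cA_{g,\eta}$ with infinite monodromy do exist.
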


To the best of our knowledge, this was not previously known for primitive symplectic
varieties. Hwang \cite[Proposition 4.1]{hwang08} proved this result when \(X\) 
is a projective hyper-K\"ahler, but his argument does not actually need \(X\) to be 
projective. However, it relies on \(B\) being simply connected of Picard rank 1, and 
therefore fails for primitive symplectic varieties.

\subsection*{Proof outline}

We use the same notation as above: let \(f: X \to B\) be a Lagrangian fibration, 
\(X\) a primitive symplectic variety of dimension \(2g\), \(X_b\) a very general fiber.
Let \(D\) be the discriminant of \(f\), let \(B^\circ = B \smallsetminus D\).
Let \(V = R^1 f_* \Z_{X^\circ}\) be the variation of Hodge structures underlying the 
smooth fibers of \(f\), and let \(\cP: B^\circ \to \cA_{g, \eta}\) be its 
associated period map.

First, we outline the proof of \Cref{mt-classification}. The idea is as follows: 
our prior results in \cite{varvak25} handle the isotrivial cases, so we may 
assume that \(f\) is not isotrivial. By \cite{bakker25}, it varies maximally. 
Thus the image of \(\cP\) is \(g\)-dimensional in \(\cA_{g, \eta}\). Understanding 
the Mumford--Tate group of \(X_b\) amounts to understanding the image of \(\cP\).

Next we want to show that \(\cP(B^\circ)\) is Hodge-generic inside some product locus 
\(\cA_{h_1, \eta_1} \times \cdots \times \cA_{h_k, \eta_k} \subseteq \cA_{g, \eta}\).
In other words, we need to rule out \(\cP\) from landing in the more exotic special 
subvarieties of \(\cA_{g, \eta}\), such as the loci of real-multiplication 
Abelian varieties. To best describe our argument, recall that the period map 
extends to the Baily--Borel compactification of \(\cA_{g, \eta}\): 
\(\ol{\cP}: B \dashrightarrow \ol{\cA}_{g, \eta}\).

The boundary of \(\ol{\cA}_{g, \eta}\), consisting of degenerations of Abelian varieties, 
has a natural stratification by the formal logarithm rank of the local monodromy operator 
corresponding to the degeneration. In \Cref{infinite-monodromy}, we show that 
there is a component \(D_1 \subseteq D\) and a Zariski open subset 
\(D_1^\circ \subseteq D\) such that \(\ol\cP\) is defined on \(D_1^\circ\).
With \Cref{local-monodromy-eigenvalues}, we see that the \(\ol{\cP}\) sends \(D_1^\circ\) 
to the rank one stratum of the boundary. 

Now the argument is straightforward. We view the \(\Q\)-variation of Hodge structures 
\(V_\Q\) as a representation of the Mumford--Tate group \(\mathbf{MT}(X_b)\).
We need to rule out a number of representations. Suppose that \(V_\Q\) 
were irreducible but \(V_\C = V_\Q \otimes_\Q \C\) split as 
\(U_1 \oplus \cdots \oplus U_k\). Then the splitting must occur over some 
number field \(K/\Q\). Since the \(U_i\) comprise a single Galois orbit, 
they all degenerate simultaneously. In particular, this would force the fibers over 
\(D_1^\circ\) to be degenerations of rank at least \(k\). This forces \(k = 1\).

Thus any splitting of \(V_\C\) must occur over \(\Q\). In particular, this rules 
out the loci of Abelian varieties with special endomorphisms, such as 
the real-multiplication loci of \(\cA_{g, \eta}\). As such, we may prove results 
about \(V_\Q\) by working with \(V_\C\). By the classification of semisimple Lie groups, 
the complex derived Mumford--Tate group 
\([\mathbf{MT}(X_b)(\C), \mathbf{MT}(X_b)(\C)]\) is isogenous to a product of some 
\(\SL_{2h}(\C)\), \(\Sp_{2h}(\C)\), and \(\mathrm{SO}_{2h}(\C)\). We claim 
that they cannot act on \(V_\C\) by any tensor representations. Indeed, the argument 
is analogous to the one above: if \(W_1\) was a weight one variation which degenerated 
with rank one, \(W_1 \otimes W_2\) would degenerate with rank \(\dim W_2\).

Hence we must have that all but one isogeny factor of \(\mathbf{MT}(X_b)\) act 
trivially on \(V_\Q = U_1 \oplus \cdots \oplus U_k\). Now because the 
\(U_i\) correspond to families of Abelian varieties of dimension 
\(\frac{1}{2}\dim U_i\), we can rule out \(\Q\)-forms of 
\(\mathrm{SO}_{2h}\) and \(\mathrm{SL}_{2h}\) from occuring as isogeny factors 
of \(\mathbf{MT}(X_b)\). We therefore only have to worry about 
\(\mathbf{MT}(X_b) \sim \mathbb{G}_m \cdot (\Sp_{2h_1} \times \cdots \times \Sp_{2h_k}\)), 
as we had wanted to show. 

Finally, we need to claim that \(h_1 = \cdots = h_k\). By prior work \cite{varvak25}, 
we know that \(V_\C\) is irreducible as a monodromy representation. By Andr\'e's theorem, 
we can find a collection of automorphisms of \(V_\C\) permuting the irreducible factors 
\(U_i\) transitively. Hence if the \(U_i\) had different numerics, we could 
appeal to the fact that \(V_\Q\) varies maximally to rule them out, as needed.

The proof of the intermediate \Cref{infinite-monodromy} relies on work of 
\cite{grushevsky-mondello-manni-tsimerman25}. We adopt their style of argument: 
suppose that \(B^\circ\) lands inside of a compact subvariety of \(\cA_{g, \eta}\). 
(This is equivalent to \(\cP(B^\circ)\) not meeting the Baily--Borel boundary 
of \(\cA_{g, \eta}\).) Since the image is \(g\)-dimensional, \cite{grushevsky-mondello-manni-tsimerman25} 
show that it must land inside of a compact special subvariety of \(\cA_{g, \eta}\). 
We use the classification result of Satake, listed in \Cref{table-of-algebraic-forms}, 
to rule out every compact special subvarieties from containing the period image. 

Finally, the proof of \Cref{general-mt-classification} follows from a result 
of \cite{lehn2016}, which states that a general singular fiber of type 
\(\mathrm{I}_0^*\) cannot occur in a general deformation of a Lagrangian fibration 
\(f: X \to B\). We use the local mondromy operator permuting the irreducible factors 
of \(V_\Q = U_1 \oplus \cdots \oplus U_k\) to show that \(f\) has fibers of type 
\(\mathrm{I}_0^*\), ruling it out from being general.

Finally, we outline the structure of the paper. 
In \S2, we review preliminaries on variations of weight one Hodge structures, namely
Deligne's canonical extension, Mumford--Tate groups, and standard examples of Lagrangian 
fibrations. We also show that the groups in \Cref{mt-classification} are attainable. 
In \S3, we prove a crucial technical remark of \cite{bakker-schnell23} about how 
Lagrangian fibrations degenerate in codimension one. Then \S4 proves the classification 
\Cref{mt-classification}, as well as the discriminant result
\Cref{codim-one-discriminant}, and the application 
to Li--Tosatti, \Cref{constant-diagonal-entry}. Finally, \S5 studies the general 
deformation of a Lagrangian fibration and proves \Cref{general-mt-classification}.

\subsection*{AI useage}
This paper contains no AI output, but the author used Gemini 3.1 Pro for help with 
research, definitions, and review.

\subsection*{Acknowledgements}
I thank my advisor, Ben Bakker, for his support and guidance in writing this paper.
I'm also grateful to \.{I}zzet Co\c{s}kun, Daniel Litt, and Valentino Tosatti for 
helpful discussions.

\section{Preliminaries}

\subsection{Deligne's canonical extension}

Let \(f: X \to B\) be a family of Abelian varieties, \(D \subseteq B\) the discriminant 
locus of \(f\), \(B^\circ = B \smallsetminus D\), and \(f: X^\circ \to B^\circ\) the 
smooth part of \(f\), as above. We may consider the underlying \(\Z\)-variation of Hodge 
structures \(V = R^1 f_* \Z_{X^\circ}\). Let \(\cV = V \otimes_{\Z} \mathcal{O}_{B^\circ}\) 
be the underlying flat bundle, equipped with the flat connection 
\(\nabla: \cV \to \cV \otimes \Omega_{B^\circ}^1\).

Extending \(\cV\) to all of \(B\) as a flat bundle is impractical, since the VHS 
degenerates along \(D\). However, Deligne constructs a vector bundle \(\ol{\cV}\) 
extending \(\cV\) to \(D^\mathrm{ls}\) the log-smooth part of \(D\), 
along with a logarithmic connection 
\begin{equation*}
    \ol{\nabla}: \ol{\cV} \to \ol{\cV} \otimes \Omega_{B^\mathrm{ls}}^1(\log D^\mathrm{ls})
\end{equation*}
extending \(\nabla\). 
We will outline the local construction of \(\ol{\cV}\) on the smooth part of the 
discriminant \(D^\mathrm{sm}\); this will be enough for the purposes of this paper,
but the general case of \(D^\mathrm{ls}\) is completely analogous, albeit with more
variables. 

Let \(U \simeq \Delta^{g} \subseteq B^\circ\) be a polydisk which only meets 
\(D^\mathrm{sm} \subseteq D\), and let \(U^\circ \simeq \Delta^* \times \Delta^{g-1}\) be 
the complement of the discriminant in \(U\). Denote the restriction of \(\cV\) to 
\(U^\circ\) by \(\cV_U\). The monodromy group of the restricted VHS is generated 
by a single element, which we will denote by \(T \in \Aut(V_b)\) for some 
\(b \in U^\circ\). Let \(t_1, t_2, \dots, t_g\) be the natural coordinates
on \(U\), so that \(U^\circ\) is defined by the non-vanishing of \(t_1\). 

By the classical Monodromy Theorem, \(T\) is quasi-unipotent, so that we can write 
\(T = T_s \cdot T_u\), where \(T_s^m = I_{2g}\) and \(T_u\) is unipotent. In particular,
\(T\) admits a formal logarithm 
\begin{equation*}
    N = \log T = \log T_s + \log T_u = N_s + N_u,
\end{equation*}
where \(N_u\) is nilpotent, uniquely defined by 
\begin{equation*}
    N_u = (T_u - I_{2g}) - \frac{1}{2} (T_u - I_{2g})^2 + \cdots
\end{equation*}
and \(N_s\) is a matrix satisfying the property 
\begin{equation*}
    \exp(-2\pi i N_s) = T_s.
\end{equation*}

Since \(N_s\) is produced by diagonalizing \(T_s\) and applying 
\(z \mapsto \frac{\log z}{2\pi i}\), it depends only on a choice of logarithm branch cut.
As such, \(N_s\) is unique upon choosing a half-open unit interval \(I \subseteq \R\) 
in which the eigenvalues of \(N_s\) lie. We construct sections for the bundle 
\(\ol{\cV}_U\) as follows.

Let \(v = v(t_1, t_2, \dots, t_g) \in V_\C|_U \subseteq \cV_U\) be a flat multi-section of
\(\cV_U\), that is, a multi-section of the underlying \(\C\)-local system \(V_\C|_U\). We allow it
to be multivalued because moving around \(D\) sends \(v\) to \(T.v\). Define 
\begin{equation*}
    \wt{v}(t_1, t_2, \dots, t_g) =
    \exp\left(\frac{\log(t_1) N}{2\pi i}\right).v(t_1, \dots, t_g).
\end{equation*}
It is not hard to see that this new section is single-valued on \(U^\circ\), and 
one readily checks that \(\wt{v}\) has a removable singularity along \(D\). For a 
detailed overview, refer to \cite{cattani16}.

Finally, for a frame of flat multisections \(v_1, \dots, v_r \in \cV_U\), we define
the canonical extension \(\ol{\cV}_U = \bigoplus_{i=1}^r \mathcal{O}_U \wt{v}_i\).
It comes with an induced logarithmic connection 
\begin{equation*}
    \nabla: \wt{v}_j \mapsto \nabla \left(\exp\left(\frac{\log(t_1) N}{2\pi i}\right). v_j\right)
    = \frac{N}{2\pi i} \wt{v}_j \otimes \frac{dt_1}{t_1}.
\end{equation*}
It is a highly non-trivial, but nevertheless true fact that this construction 
globalizes to all of \(B^\mathrm{ls}\); that is, one glues \(\ol{\cV}_U\) along all the
analytic neighborhoods \(U\) of \(D^\mathrm{ls}\) the log-smooth locus of the 
discriminant to canonically produce an extension \(\ol{\cV}\) of \(\cV\).

Next, we turn our attention to the special case where \(V\) is a variation of Hodge 
structures coming from a Lagrangian fibration \(f: X \to B\). In this setting, we can describe 
Deligne's canonical extension in another way. Generalizing a result of Matsushita, 
Schnell \cite[\S 4]{schnell23} proves that the natural isomorphism 
\begin{equation*}
    \iota_{\sigma}: \cT_{B^\circ} \xrightarrow{\sim} F^1 \cV, \quad 
    w \mapsto \sigma(w, -)
\end{equation*}
extends to an isomorphism 
\begin{equation*}
    \iota_{\sigma}: \cT_{B^\mathrm{ls}} \xrightarrow{\sim} F^1 \ol{\cV},
\end{equation*}
where \(F^1 \ol{\cV} := \ol{\cV} \cap j_*(F^1 \cV)\), and \(\ol{\cV}\) is 
Deligne's canonical extension for which \(N_s\) has eigenvalues 
in \((-1, 0]\). Since the locus of \(D\) which fails to be log-smooth 
has codimension at least 2 in \(B\), we may further reflexivize both sheaves 
to produce an isomorphism \(\cT_B \simeq (F^1 \ol{\cV})^{\vee \vee}\); in 
abuse of notation, we will often drop the reflexivization on the right.

\subsection{Mumford--Tate groups of weight one Hodge structures}

Recall that the Deligne torus is the real algebraic group 
\(\mathbb{S} = \mathrm{Res}_{\C/\R} \mathbb{G}_{m, \C}\); the data of a 
weight one \(\Q\)-Hodge structure is the data of a vector space 
\(H_\Q\) together with a representation \(h: \mathbb{S} \to \GL(H_\R)\) 
such that the underlying weight representation 
\begin{equation*}
    \mathbb{G}_{m, \R} \xrightarrow{w} \mathbb{S} \xrightarrow{h} \GL(H_\R)
\end{equation*}
is defined over \(\Q\).

\begin{definition}
    The {\bf Mumford--Tate group} of a \(\Q\)-Hodge structure \(H_\Q\) is the 
    smallest algebraic subgroup \(G \subseteq \GL(H_\Q)\) whose real points
    contain the image of \(h\).
\end{definition}

For a brief introduction to Mumford--Tate groups and Mumford--Tate domains, we 
recommend \cite[Chapter 15]{CMP}. However, we will summarize the necessary basic 
results below.

If \(V\) is a \(\Q\)-VHS on a quasi-projective variety \(B^\circ\), there is 
a countable collection \(\{Z_j\}\) of closed subvarieties of \(B^\circ\) such that 
for any \(b, b' \in B^\circ \smallsetminus \bigcup_{j=1}^\infty Z_j\) 
we have \(\mathbf{MT}(V_b) = \mathbf{MT}(V_{b'})\). For this reason, we will abuse 
notation and write \(\mathbf{MT}(V)\) to denote the Mumford--Tate group of the 
Hodge structure \(V_b\) over the very general point \(b \in B^\circ\).

We note that if \(Z \subseteq B^\circ\) is a closed subvariety, then 
\(\mathbf{MT}(V|_Z) \leq \mathbf{MT}(V)\). A weight one polarized VHS admits a period map 
to a period domain \(\cP: B^\circ \to \cA_{g, \eta}\). Since the Mumford--Tate 
group only shrinks under specialization, we can make the following definition.

\begin{definition}
    Let \(H\) be a weight one polarized \(\Q\)-Hodge structure. We say that \(H\) is 
    {\bf Hodge-generic} if \(H\) has a maximal Mumford--Tate group in \(\cA_{g, \eta}\),
    that is, \(\mathbf{MT}(H) = \mathrm{GSp_{2g, \Q}}\).
\end{definition}

Notice that the VHS \(V\) naturally gives rise to two representations:
the Mumford--Tate group \(\mathbf{MT}(V_b)\) and the fundamental group \(\pi_1(B^\circ, b)\)
both act on the very general fiber \(V_b\). One can ask if these two representations 
are related. A theorem of Yves Andr\'e provides an affirmative answer.

\begin{theorem}[Andr\'e]
    Let \(V\) be a polarized \(\Q\)-VHS on a quasi-projective variety \(B^\circ\), 
    and let \(\rho: \pi_1(B^\circ, b) \to \GL(V_b)\) be the monodromy action on a 
    very general fiber. Take \(M\) to be the Zariski closure of the monodromy group 
    \(\im(\rho) \leq \GL(V_b)\), and let \(M^\circ \leq M\) be its identity component.
    Then \[M^\circ \trianglelefteq [\mathbf{MT}(V_b), \mathbf{MT}(V_b)].\]
\end{theorem}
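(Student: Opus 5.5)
The statement is Andr\'e's theorem. I would prove it in three steps: first that $M^\circ$ lies in $\mathbf{MT}(V_b)$, then that $\mathbf{MT}(V_b)$ normalizes $M^\circ$, and finally that $M^\circ$ lies in the derived group. Throughout I use that passing to a finite \'etale cover of $B^\circ$ replaces $\im(\rho)$ by a finite-index subgroup. This changes neither $M^\circ$ nor $\mathbf{MT}(V_b)$ at a very general point. So I may assume $M$ is connected whenever convenient.

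\emph{Step 1: $M^\circ \leq \mathbf{MT}(V_b)$.} By Chevalley's theorem, $\mathbf{MT}(V_b)$ is the stabilizer in $\GL(V_b)$ of the Hodge classes in the tensor spaces
\[
T^{m,n} = V_b^{\otimes m} \otimes (V_b^\vee)^{\otimes n}.
\]
(It is reductive, being the Mumford--Tate group of a polarizable structure, so fixing tensors, rather than lines, suffices.) Take a Hodge class $t \in T^{m,n}$ at a very general point $b$. Parallel transport $t$ along any path, and consider the locus where the transported class is Hodge. This locus is a closed analytic, in fact algebraic by Cattani--Deligne--Kaplan, subvariety of the universal cover, descending to a countable union of subvarieties of $B^\circ$. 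Since $b$ lies outside the countably many proper Hodge loci $\{Z_j\}$, this locus must be everything. Hence the $\pi_1(B^\circ,b)$-orbit of $t$ consists of Hodge classes in a fixed lattice of fixed Hodge norm, so it is finite. After a finite \'etale cover, $t$ becomes flat. Therefore $\im(\rho)$, and hence $M^\circ$, fixes every Hodge tensor, and so $M^\circ \leq \mathbf{MT}(V_b)$.

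\emph{Step 2: $\mathbf{MT}(V_b)$ normalizes $M^\circ$.} I would invoke Deligne's theorem of the fixed part and his semisimplicity theorem for polarizable VHS on quasi-projective bases. Semisimplicity makes the local system semisimple, so $M^\circ$ is reductive; in fact it is semisimple, since a polarized $\Z$-VHS has no nontrivial flat characters of infinite order. By Chevalley again, $M^\circ$ is the pointwise stabilizer of its invariants $W^{m,n} = (T^{m,n})^{M^\circ}$. After a finite cover, $W^{m,n}$ is the space of global flat sections of the tensor VHS. By the theorem of the fixed part, $W^{m,n}$ is a sub-Hodge structure of $T^{m,n}$, so it is $\mathbf{MT}(V_b)$-stable. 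Any $g \in \mathbf{MT}(V_b)$ therefore carries the set of tensors fixed by $M^\circ$ to itself, and so $g M^\circ g^{-1} = M^\circ$.

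\emph{Step 3: $M^\circ$ lies in the derived group.} Since $M^\circ$ is semisimple, it equals its own derived group, so $M^\circ = [M^\circ, M^\circ] \leq [\mathbf{MT}(V_b), \mathbf{MT}(V_b)]$. Being normal in $\mathbf{MT}(V_b)$ by Step 2, it is a fortiori normal in the derived subgroup.

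The main obstacle is Step 2, specifically the theorem of the fixed part over a non-compact base. This requires Schmid's asymptotics, or equivalently Deligne's argument via the canonical extension of \S2 and mixed Hodge theory, to show that flat global sections have Hodge-theoretic type. I would quote it rather than reprove it. Everything else is formal Tannakian bookkeeping.
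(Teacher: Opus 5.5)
The paper does not prove this statement; it only quotes Andr\'e. Your three steps are the classical argument for pure variations: Hodge tensors at a Hodge-generic point become flat after a finite cover, the fixed part theorem plus Chevalley gives normality, and semisimplicity of \(M^\circ\) puts it in the derived group. The argument is sound for a \(\Z\)-VHS, but two points need fixing.

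\textbf{The integral structure.} You use an integral structure twice: the ``fixed lattice'' in Step 1 and the ``polarized \(\Z\)-VHS'' in Step 2. The statement, however, only assumes a polarized \(\Q\)-VHS. This is a defect of the statement, not of your argument, because the hypothesis cannot be dropped. Here is a counterexample.
\begin{itemize}
\item On \(\C^*\), take the rank-two \(\Q\)-local system with monodromy \(A=\frac{1}{5}\begin{pmatrix}3&-4\\4&3\end{pmatrix}\in\mathrm{SO}_2(\Q)\). Make it pure of type \((0,0)\) and polarize it by \(x_1y_1+x_2y_2\).
\item Its Mumford--Tate group is trivial. But \(A\) has infinite order, since its eigenvalues \((3\pm 4i)/5\) are not algebraic integers, so \(M^\circ=\mathrm{SO}_2\).
\item Tensoring with \(H^1\) of a fixed non-CM elliptic curve gives a weight-one counterexample.
\end{itemize}
In this example \(e_1\) is a Hodge class with an infinite monodromy orbit of constant norm, which is exactly where Step 1 breaks without a lattice. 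So state the integrality hypothesis explicitly. It costs nothing in the paper, where \(V=R^1f_*\Z_{X^\circ}\). A smaller point: the pointwise stabilizer of all Hodge classes in the \(T^{m,n}\) is the special Mumford--Tate group, not \(\mathbf{MT}(V_b)\). This only strengthens Step 1.

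\textbf{Semisimplicity of \(M^\circ\).} Step 3 rests entirely on \(M^\circ\) being semisimple, and your justification ``no nontrivial flat characters of infinite order'' is too quick. The relevant characters are not defined over \(\Q\) and do not appear in \(V\) itself. Spell the argument out as follows.
\begin{itemize}
\item A nontrivial character \(\chi\) of \(M^\circ\) occurs on a line in some \(T^{m,n}_{\ol{\Q}}\).
\item After your finite cover, this line is a rank-one sub-local system on which the monodromy acts through \(\chi\). By Zariski density, this action has infinite image.
\item By Deligne's decomposition of a polarized VHS over \(\C\) into irreducible local systems carrying polarized \(\C\)-VHS structures, this line has unitary monodromy, so \(|\chi(\gamma)|=1\).
\item The same holds for every Galois conjugate \(\sigma\circ\chi\), which also occurs because \(T^{m,n}\) is defined over \(\Q\).
\item Each \(\chi(\gamma)\) is an algebraic integer, because \(\gamma\) preserves a lattice.
\item Kronecker's theorem now makes every \(\chi(\gamma)\) a root of unity of bounded order, contradicting infinite image.
\end{itemize}

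Alternatively, you can bypass semisimplicity as Andr\'e does. Take a faithful representation of the torus \(\mathbf{MT}(V_b)/[\mathbf{MT}(V_b),\mathbf{MT}(V_b)]\). It is an \(\mathbf{MT}(V_b)\)-stable summand of some \(\bigoplus T^{m,n}_b\), and its projector is a Hodge tensor. By your Step 1, it spreads out to a sub-VHS whose generic Mumford--Tate group is a torus. That sub-VHS therefore has constant period map. Its monodromy is then compact and integral, hence finite, so \(M^\circ\) maps trivially to the torus.
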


On the other hand, the group \(\mathbf{MT}(V_b)\) admits an action on \(\cA_{g, \eta}\). 
We refer to the orbit of \(V_b\) under this action as a Mumford--Tate subdomain. It 
can be shown that this orbit is a Hermitian locally symmetric submanifold. Consequently, 
the groups which can occur as \(\mathbf{MT}(V_b)\) are highly
constrained. Due to work of Satake \cite{satake67}, we have a full classification of 
the group \(\mathbf{MT}(V_b)\) and its action on \(V_b\), up to a central isogeny. 
These groups are presented in \Cref{table-of-real-forms}, along with some useful associated
invariants.

\begin{table}
    \centering
    \begin{tabular}{|c||c|c|}
        \hline && \\
        {\bf Real group } & {\bf Dimension of } & {\bf Dimension of } \\
        {\bf (up to isogeny)} & {\bf Mumford--Tate subdomain} & {\bf representation}\\ && \\
        \hline && \\
        \(\SL_2(\R)\) & 1 & 2 \\ && \\ \hline &&\\
        \(\mathrm{SU}(p, n-p)\) & \(p(n-p)\) & \(n\) \\ && \\ \hline &&\\
        \(\mathrm{SU}(1, n-1)\) & \(n-1\) & \(\binom{n}{c}\)\\ && \(c \in \{2, 3, \dots, n-2\}\) \\ && \\ \hline &&\\
        \(\mathrm{SO}^*(2r)\) & \(\binom{r}{2}\) & \(2r\) \\ && \\ \hline &&\\
        \(\Sp(r)\) & \(\binom{r+1}{2}\) & \(2r\) \\ && \\ \hline &&\\
        \(\mathrm{SO}(2p-2, 2)\) & \(2p-2\) & \(2^{p-1}\) \\ && \\ \hline &&\\
        \(\mathrm{SO}(2p-1, 2)\) & \(2p-1\) & \(2^p\) \\ && \\  
        \hline
    \end{tabular} \\
    \caption{Satake's classification of real forms of irreducible weight one Mumford--Tate representations.}
    \label{table-of-real-forms}
\end{table}

\subsection{Some examples of Lagrangian fibrations and their Mumford--Tate groups}

To place our results in context, we will consider some standard examples of Lagrangian
fibrations, and figure out the Mumford--Tate groups of their very general fibers. 

\begin{example}
Let \(S\) be an elliptically fibered K3 surface, and let \(f: S \to \bbP^1\) be the 
associated fibration. Then \(f\) is a Lagrangian fibration and the fibers of \(f\) 
are genus one curves. The image of the associated period map has dimension either 
one or zero.

In the former case, the period map dominates \(\cA_1\), so the Mumford--Tate group 
of the very general fiber \(\mathbf{MT}(S_b) = \GL_2\). In the latter case, 
every fiber of \(f\) is isogenous to some elliptic curve \(E\), so that 
\(\mathbf{MT}(S_b) = \mathbf{MT}(E)\) is one of \(\GL_2\) or 
\(\mathrm{Res}_{K/\Q} \mathbb{G}_{m, K}\), depending on whether \(E\) has CM. 
Here \(K\) is the CM field of \(E\). \qed
\end{example}

From this trivial example, it is clear that our \Cref{mt-classification} holds when 
\(X_b\) has dimension one. Put differently, the only special subvarieties of \(\cA_1\) 
are \(\cA_1\) and CM points. However, in higher dimensions, we see from 
\Cref{table-of-real-forms} that more exotic special subvarieties start to appear. 
Our claim is that despite their existence, Lagrangian fibrations tend to avoid them, 
behaving like the one-dimensional case. 

Next, we consider an immediate generalization of the preceding example.

\begin{example}[The \(K3^{[n]}\) fibration] \label{example-K3n}
As before, take \(S\) to be an elliptically fibered K3 surface, and \(\pi: S \to \bbP^1\) 
the associated Lagrangian fibration. Recall that for a smooth surface \(S\), 
the Hilbert scheme of \(g\) points \(X := S^{[g]}\) is also smooth. Note that 
the map \(g\) induces a proper morphism \(f := \pi^{[g]}: X \to (\bbP^1)^{[g]} = \bbP^g\).
One can readily see that \(X\) is hyper-K\"ahler and \(f\) is a Lagrangian fibration.

Notice that the general fiber of \(f\) has the form 
\(X_{(b_1, \dots, b_g)} = S_{b_1} \times \cdots \times S_{b_g}\). It is a product of 
distinct genus one curves, so it lies in the product locus 
\((\cA_1)^g \subseteq \cA_{g, \eta}\). Again, we have three cases to consider. 
If the period map associated to \(\pi: S \to \bbP^1\) had one-dimensional image, 
then the period map associated to \(f: X \to \bbP^g\) has \(g\)-dimensional image. 
In this case, it is forced to dominate \((\cA_1)^g\), so that the Mumford--Tate group of 
the very general fiber \(\mathbf{MT}(X_b) = (\mathbb{G}_m \times (\SL_2)^g) / \pm 1\).

On the other hand, if \(\pi\) was isotrivial with fiber \(E\), then \(f\) is also 
isotrivial, and there are two cases to consider. The general fiber of \(f\) is of the 
form \(E^g\), so we again have \(\mathbf{MT}(X_b) = \mathbf{MT}(E)\). 
This is equal to \(\GL_2\) when \(E\) is not CM, and 
\(\mathrm{Res}_{K/\Q} \mathbb{G}_{m, K}\) when \(E\) is CM with endomorphism field \(K\). 
\qed
\end{example}

If one is to believe the statement of \Cref{general-mt-classification}, the 
behavior of the general fiber observed in this example is unusual; for a generically 
chosen Lagrangian fibration, we would expect that the very general fiber is simple. 
Of course, one cannot ``write down'' a general Lagrangian fibration. Still, 
we can certainly produce a Lagrangian fibration exhibiting a more typical behavior. 

\begin{example}[The Beauville--Mukai system]
Let \(S\) be a K3 surface, and \(H\) a primitive ample class on \(S\), with \(H^2 = 2g-2\).
Then \(|H| \simeq \bbP^g\) and the curves \(C \in |H|\) have genus \(g\). We can 
take the relative compactified Jacobians of the curves \(C \in |H|\) as follows:
let \(\mathbf{v} = (0, H, 1-g) \in H^0(S, \Z) \oplus H^2(S, \Z) \oplus H^4(S, \Z)\) be 
the given Mukai vector, and let \(X := M_H(\mathbf{v})\) be the moduli space of 
\(H\)-stable sheaves on \(S\) with Mukai vector \(\mathbf{v}\). 

To see that this is indeed a relative Jacobian, observe that \(X\) comes with a 
natural morphism \(f: X \to |H|\), taking each pure sheaf \(F \in X\) to the curve 
\(C \in |H|\) on which it is supported. 
In this case, \(F\) is the pushforward of some line bundle \(L\) on \(C\). 
By Riemann--Roch, we have \(\chi(F) = \chi(L) = \deg L + 1 - g\). But 
\(\chi(F) = 1-g\) by construction, so that \(\deg L = 0\). Therefore, we can 
identify the fiber \(X_{[C]} \subseteq M_H(\mathbf{v})\) with \(\mathrm{Pic}^0(C)\).

One might have expected that we would need to take \(X\) to be a moduli space of 
semi-stable sheaves for it to be compact. However, if we pick \(H\) to be sufficiently 
general, there are no strictly semi-stable sheaves in \(M_H(\mathbf{v})^\mathrm{ss}\).
Hence \(X\) is compact, and even projective. We claim that \(X\) is a hyper-K\"ahler 
manifold of dimension \(2g\). We will not prove this fact, but we will at least exhibit 
the symplectic pairing. Let \(F \in X\) be a sheaf. The associated tangent space can by 
identified with \(T_{[F]} X = \mathrm{Ext}^1(F, F)\). We let the symplectic pairing be 
the following composition:
\begin{equation*}
    \mathrm{Ext}^1(F, F) \times \mathrm{Ext}^1(F, F) \xrightarrow{\cup} 
    \mathrm{Ext}^2(F, F) \xrightarrow{\mathrm{Tr}} H^2(S, \mathcal{O}_S) \simeq \C.
\end{equation*}

Next we consider the Mumford--Tate group \(\mathbf{MT}(X_{[C]})\) of the very general 
fiber. Note that since \(H\) is ample, the smooth curves \(C \in |H|\) vary maximally, 
and hence so do their Jacobians. Thus the period map associated to \(f\) has 
\(g\)-dimensional image. In particular, the isotrivial cases seen in the previous examples
do not occur here. 

We claim that \(\mathbf{MT}(X_b) = \mathrm{GSp}_{2g}\). By Andr\'e's theorem, 
it is enough to show that the algebraic monodromy group of \(f\) is all of
\(\Sp_{2g, \Q}\). This follows from classical Picard--Lefschetz theory; one takes 
Leftschetz pencils \(\mathbb{P}^1 \subseteq |H|\) and observes that their algebraic 
monodromy groups are generated by Picard--Lefschetz cycles 
\(x \mapsto \left<x, \delta\right> \delta\). Since the nodal curves in \(|H|\) form 
an irreducible component of the discriminant of \(f\), the Picard--Lefschetz cycles 
form a single monodromy orbit. Our desired result follows from the following 
classical theorem.

\begin{theorem}[\cite{deligne80}, Lemma \(4.4.2^a\)]
    Let \((V, \left<,\right>)\) be a finite-dimensional complex symplectic space, 
    and \(M \leq \Sp(V)\) an algebraic subgroup. Suppose there is an \(M\)-orbit 
    \(R \subseteq V\) such that 
    \begin{enumerate}
        \item \(R\) spans \(V\),
        \item \(M\) is the smallest algebraic subgroup with the property that 
        for every \(\delta \in R\),
        \[\{x \mapsto x + \lambda\left<x, \delta\right>\delta \mid \lambda \in \C\} \subseteq M.\]
    \end{enumerate}
    Then \(M = \Sp(V)\).
\end{theorem}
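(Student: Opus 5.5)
The plan is a Lie algebra argument: show that the Lie algebra \(\mathfrak{m}\) of \(M\) is all of \(\mathfrak{sp}(V)\), and then use that \(M\) is connected.

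\emph{Connectedness.} For each \(\delta \in R\), the group \(\{x \mapsto x + \lambda\left<x,\delta\right>\delta \mid \lambda \in \C\}\) is a one-parameter unipotent subgroup, isomorphic to \(\mathbb{G}_a\), and hence connected. By hypothesis (2), \(M\) is the smallest algebraic subgroup containing all of these. An algebraic group generated by a family of connected algebraic subgroups is connected, so \(M\) is connected. Consequently the orbit \(R = M.\delta_0\) is an irreducible, locally closed subvariety of \(V\). To conclude \(M = \Sp(V)\), it now suffices to show \(\mathfrak{m} = \mathfrak{sp}(V)\).

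\emph{Translating into \(\mathrm{Sym}^2 V\).} Use the standard \(\Sp(V)\)-equivariant isomorphism \(\mathrm{Sym}^2 V \xrightarrow{\sim} \mathfrak{sp}(V)\), given on products by
\begin{equation*}
uv \mapsto \bigl(x \mapsto \left<x,u\right>v + \left<x,v\right>u\bigr).
\end{equation*}
Differentiating the transvection group of \(\delta\) at \(\lambda = 0\) gives the endomorphism \(x \mapsto \left<x,\delta\right>\delta\), which is (up to the factor \(\tfrac12\)) the image of \(\delta^2\). So \(\mathfrak{m}\) contains the linear span \(W\) of \(\{\delta^2 : \delta \in R\}\). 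The subspace \(W\) is \(M\)-stable, because \(M\) preserves \(R\) and acts on \(\mathfrak{m}\) by the adjoint action.

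\emph{Obtaining mixed products.} Take \(\delta, \delta' \in R\) with \(c := \left<\delta, \delta'\right> \neq 0\). Applying the transvections along \(\delta'\) to \(\delta\) gives the elements \(\delta + \lambda c\,\delta'\), which lie in \(R\) for all \(\lambda \in \C\). Hence
\begin{equation*}
(\delta + \lambda c\,\delta')^2 = \delta^2 + 2\lambda c\,\delta\delta' + \lambda^2 c^2\,\delta'^2 \in W
\end{equation*}
for all \(\lambda\). Comparing coefficients of this polynomial in \(\lambda\) shows \(\delta\delta' \in W\).

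\emph{Main obstacle: pairs with \(\left<\delta,\delta'\right> = 0\).} I would handle these by a density argument. The map \(R \times R \to \mathrm{Sym}^2 V\), \((\delta,\delta') \mapsto \delta\delta'\), is a morphism, and the preimage of the linear subspace \(W\) is Zariski closed. The locus \(\{\left<\delta,\delta'\right> \neq 0\}\) is Zariski open in \(R \times R\). It is nonempty: if \(\left<\cdot,\cdot\right>\) vanished identically on \(R \times R\), then, since \(R\) spans \(V\) by hypothesis (1), it would vanish on \(V \times V\), contradicting nondegeneracy (we may assume \(V \neq 0\)). Since \(R \times R\) is irreducible, this nonempty open set is dense. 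Therefore \(\delta\delta' \in W\) for all \(\delta, \delta' \in R\).

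\emph{Conclusion.} Because \(R\) spans \(V\), the products \(\delta\delta'\) with \(\delta,\delta' \in R\) span \(\mathrm{Sym}^2 V\). Thus \(W = \mathrm{Sym}^2 V \cong \mathfrak{sp}(V)\), so \(\mathfrak{m} = \mathfrak{sp}(V)\). Since \(M\) is a connected algebraic subgroup of \(\Sp(V)\) with the full Lie algebra, \(M = \Sp(V)\).
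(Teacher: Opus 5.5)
The paper does not prove this statement. It quotes it from Deligne (Weil II, Lemma $4.4.2^a$) and uses it as a black box in the Beauville--Mukai example, so there is no internal argument to compare yours against.

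Your proof is correct and complete as written; it is the standard Lie-algebraic argument. The key steps all check out:
\begin{enumerate}
\item Minimality of $M$ forces connectedness. The quickest way to see this: $M^\circ$ already contains every transvection group, so $M = M^\circ$. Hence $R$ and $R\times R$ are irreducible.
\item Stability of $R$ under the transvections in $M$ gives $\delta\delta' \in W$ whenever $\left<\delta,\delta'\right> \neq 0$.
\item The locus where $\left<\delta,\delta'\right> \neq 0$ is a nonempty open subset of $R\times R$, because $R$ spans $V$ and the form is nondegenerate. Density then extends the conclusion to all pairs.
\item A closed subgroup of the connected group $\Sp(V)$ with Lie algebra $\mathfrak{sp}(V)$ must be all of $\Sp(V)$.
\end{enumerate}

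The single-orbit hypothesis enters in exactly two places: the transvection-stability of $R$ and its irreducibility. The $M$-stability of $W$ that you record is never used later and can be dropped.
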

\qed
\end{example}

So far, we have only produced Lagrangian fibrations coming from hyper-K\"ahler manifolds. 
However, if we allow \(X\) to be singular, we can exhibit more exotic Mumford--Tate groups.

\begin{proposition} \label{attainability}
    Let \(h, k \in \Z_{\geq 1}\) be any positive integers, and set \(g = hk\).
    There exists a primitive symplectic variety \(X\) of dimension \(2g\), 
    and a Lagrangian fibration \(f: X \to B\), such that the Mumford--Tate group 
    of the very general fiber 
    \[\mathbf{MT}(X_b) = (\mathbb{G}_m \times (\mathrm{Sp}_{2h})^k) / \pm 1. \]
\end{proposition}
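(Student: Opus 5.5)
Start with a Beauville--Mukai system of dimension \(2h\) and take a product, then pass to a quotient so that the result is primitive. Concretely, let \(Y \to \bbP^h\) be a Beauville--Mukai system attached to a very general polarized K3 surface \((S, H)\) with \(H^2 = 2h-2\). By the preceding example, the very general fiber is \(\mathrm{Pic}^0(C)\) for a smooth \(C \in |H|\), and its Mumford--Tate group is \(\mathrm{GSp}_{2h}\). The product \(Y^k \to (\bbP^h)^k\) is a Lagrangian fibration of a \(2g\)-dimensional holomorphic symplectic manifold. However, it fails to be primitive, since \(\sigma\) is not unique up to scaling. The standard remedy is to take the quotient by the symmetric group: \(X := Y^k/\mathfrak{S}_k\) (or \(Y^{(k)}\)), fibered over \(B := (\bbP^h)^k/\mathfrak{S}_k\). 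Then \(X\) is a symplectic variety, since finite quotients of symplectic manifolds by symplectic actions have rational Gorenstein (hence log terminal) singularities. It is \(\Q\)-factorial, being a finite quotient of a smooth variety, and \(H^1(X,\mathcal{O}_X) = H^1(Y^k,\mathcal{O})^{\mathfrak{S}_k} = 0\). The invariant holomorphic 2-forms on \(Y^k\) are spanned by \(\sum_i \mathrm{pr}_i^*\sigma_Y\), so \(\sigma\) is unique up to scaling. One must also verify that these forms extend to a resolution; this follows from the Beauville/Greb--Kebekus--Peternell extension for quotient singularities. Hence \(X\) is primitive symplectic, and \(f\) is Lagrangian because fibers of the product are Lagrangian and the quotient map is étale over a dense open set.

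The very general fiber of \(f\) is the image of \(Y_{b_1} \times \cdots \times Y_{b_k}\) with the \(b_i\) distinct and very general, and the quotient map is an isomorphism on this fiber. Thus \(X_b \cong J(C_1)\times\cdots\times J(C_k)\) for very general, mutually independent curves \(C_i \in |H|\). Next compute the Mumford--Tate group of the product. It sits inside \(\prod_i \mathrm{GSp}_{2h}\) with a common similitude character, that is, inside \((\mathbb{G}_m \times \Sp_{2h}^k)/\pm1\). To get equality, use Andr\'e's theorem: the algebraic monodromy of the family over \((\bbP^h)^k\) minus discriminants is \(\Sp_{2h}^k\), because the fundamental group of the product surjects onto the product of the fundamental groups and each factor has monodromy Zariski-dense in \(\Sp_{2h}\) by the Deligne lemma argument in the example. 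Passing to the finite-index subgroup of \(\pi_1(B^\circ)\) coming from \(\pi_1\) of the cover leaves the identity component unchanged. Therefore \(\Sp_{2h}^k \trianglelefteq [\mathbf{MT},\mathbf{MT}] \subseteq \Sp_{2h}^k\). Since \(\mathbf{MT}\) contains the scalar weight cocharacter, we get \(\mathbf{MT}(X_b) = (\mathbb{G}_m\times \Sp_{2h}^k)/\pm1\).

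The case \(h=1\) needs adjusting, since the Beauville--Mukai construction with \(g=1\) gives an elliptic K3 and should be taken with non-isotrivial fibration. The case \(k=1\) is just the example itself. The main obstacle is the primitivity of \(X\): checking \(\Q\)-factoriality and log-terminality of the symmetric quotient, and especially that \(h^{2,0}\) of a resolution is one, i.e. that reflexive 2-forms on \(X\) are exactly the \(\mathfrak{S}_k\)-invariant ones. The alternative concern, that the fiber over a point of the branch locus is singular, does not matter, since only the very general fiber enters.
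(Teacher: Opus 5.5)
Your proposal is correct and is essentially the paper's own construction: take a Beauville--Mukai system \(Y \to \bbP^h\) with Hodge-generic fibers and pass to the symmetric power \(X = Y^{(k)} \to (\bbP^h)^{(k)}\), whose very general fiber is a product of \(k\) very general Jacobians. The difference is one of detail. The paper simply asserts that distinct Hodge-generic factors force \(\mathbf{MT}(X_b) = (\mathbb{G}_m \times \Sp_{2h}^k)/\pm 1\), and it takes primitivity of \(Y^{(k)}\) for granted. Your Andr\'e-theorem argument (algebraic monodromy \(\Sp_{2h}^k\) for the product family, unchanged on the finite \'etale cover over \(B^\circ\)) and your verification of primitivity actually justify those steps.
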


In particular, the groups appearing in \Cref{mt-classification} are all attainable.

\begin{proof}
Let \(Y\) be a \(2h\)-dimensional hyper-K\"ahler manifold admitting a Lagrangian fibration 
\(\pi: Y \to S\) with \(\mathbf{MT}(Y_s) = \mathrm{GSp}_{2h}\). (For example, you can 
take \(\pi\) to be an appropriate Beauville--Mukai system.) Take the \(k\)-th symmetric 
power of \(Y\), \(X := Y^{(k)} = Y^k/\mathfrak{S}_k\). This is a primitive symplectic 
variety. Furthermore, the induced fibration 
\begin{equation*}
    f = \pi^{(k)}: X \to S^{(k)}
\end{equation*}
is Lagrangian. But by construction, the very general fiber of \(f\) has the form 
\(X_{(s_1, \dots, s_k)} = Y_{s_1} \times \cdots \times Y_{s_k}\), where each 
\(Y_{s_i}\) is distinct and Hodge--generic. This forces 
\(\mathbf{MT}(X_{(s_1, \cdots, s_k)}) = (\mathbb{G}_m \times (\mathrm{Sp}_{2h})^k) / \pm 1\), 
as needed.
\end{proof}

We conclude this section with the following observation.
The proof above show that each \(\mathbb{G}_m \cdot (\Sp_{2h})^k\) is attainable as 
the Mumford--Tate group of a {\it primitive symplectic} Lagrangian fibration. However, 
we can ask the following specialization of \Cref{attainability}: 
\begin{question}
    For which positive integers \(h\) and \(k\) does there exist a hyper-K\"ahler 
    manifold \(X\) of dimension \(2hk\) and a Lagrangian fibration \(f: X \to B\) with 
    \[\mathbf{MT}(X_b) = (\mathbb{G}_m \times (\mathrm{Sp}_{2h})^k) / \pm 1? \]
\end{question}

Notice that the argument in \Cref{attainability} no longer works in this setting. 
Indeed, when \(k \geq 2\), the symmetric power \(Y^{(k)}\) is not smooth.
Of course, we may try to find a crepant resolution for \(Y^{(k)}\), like
\(Y^{[k]} \to Y^{(k)}\) when \(h = 1\). However, when \(h \geq 2\), 
the cohomology of the base \(S^{(k)}\) is no longer equal to that of \(\bbP^n\). 
But by Matsushita \cite{matsushita16}, the base of a hyper-K\"ahler Lagrangian fibration 
must have the same intersection cohomology as projective space. Thus, 
if there were a resolution of \(Y^{(k)}\) by a hyper-K\"ahler, it would need to be 
indeterminate.

We expect that the only attainable values of \((h, k)\) are \((g, 1)\) and \((1, g)\), 
where \(g\) is any positive integer. These values were shown to be attainable in the 
examples above. However, we do not currently have an argument to rule out the 
intermediate cases.

\section{General singular fibers in a Lagrangian fibration}

For this section, we work with local analytic Lagrangian degenerations. To that end, 
let \(U \simeq \Delta^g\) be a polydisk, \(D \subseteq U\) an irreducible codimension 
one subvariety such that 
\(U^\circ := U \smallsetminus D \simeq \Delta^{g-1} \times \Delta^*\).

Let \((X, \sigma)\) be a holomorphic symplectic manifold of dimension \(2g\) equipped 
with a proper surjective morphism \(f: X \to U\), such that 
\(X_b\) is Lagrangian for all \(b \in U\), and \(X_b\) is smooth for \(b \in U^\circ\)
and singular when \(b \in D\). Let \(X^\circ\) be the preimage of \(U^\circ\) in \(X\). 
Set \(V = R^1 f_* \Z_{X^\circ}\) to be the variation of Hodge structures underlying \(f\).
Finally, let \(T \in \Sp_{2g}(\Z)\) be a generator for the monodromy group of \(f\) 
acting on the cohomology of a general fiber \(X_b\).

\begin{lemma}[\cite{bakker-schnell23}, Remark 6.8] \label{local-monodromy-eigenvalues}
    \(T\) has at most two non-trivial eigenvalues \(\zeta_i, \ol{\zeta}_i\), which 
    are the primitive \(i\)-th roots of unity for an integer \(i\) dividing six.
    If \(T\) has infinite order, \(i\) divides two.
\end{lemma}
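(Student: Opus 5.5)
The plan is to use the extension of the isomorphism \(\iota_\sigma: \cT_{B^\circ} \simeq F^1\cV\) across \(D\) (Schnell's result recalled in \S2.1) to control the eigenvalues of the residue of \(\ol{\nabla}\). Then we exploit that \(T\) is integral, so that its characteristic polynomial is a product of cyclotomic factors.

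\textbf{Step 1: few eigenvalues \(\ne 1\).} Fix coordinates \(t_1,\dots,t_g\) on \(U\) with \(D = \{t_1 = 0\}\), and take Deligne's extension \(\ol{\cV}\) with \(N_s\) having eigenvalues in \((-1,0]\). The residue \(\mathrm{Res}\,\ol\nabla = N/2\pi i\) acts on the fiber \(\ol{\cV}|_{0}\), and \(\exp(-2\pi i\,\mathrm{Res})\) is conjugate to \(T\). Along \(D\) the Hodge filtration extends, giving \(0 \to F^1\ol{\cV} \to \ol{\cV} \to \ol{\cV}/F^1 \to 0\) with \(F^1\ol{\cV} \simeq \cT_U\). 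Griffiths transversality gives \(\ol\nabla(F^1) \subseteq \ol{\cV}\otimes\Omega^1(\log D)\). The key computation is that under \(\iota_\sigma\), the residue composed with \(F^1 \to \ol{\cV}/F^1 \simeq (F^1)^\vee\) (via the polarization) is the cubic form / second fundamental form. Closedness of \(\sigma\) forces this to kill all vectors \(\partial_{t_j}\) with \(j \ge 2\), since only the \(dt_1/t_1\) component has a pole. Hence \(\mathrm{Res}\) is supported on a rank-one piece of \(F^1\) together with its dual line in \(\ol{\cV}/F^1\). Concretely, I would show that \(\mathrm{Res}\) preserves the subspace spanned by \(\partial_{t_2},\dots,\partial_{t_g}\) and the corresponding annihilated quotient, acting by \(0\) there. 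Consequently \(N\) has rank at most two and \(T\) acts as the identity on a codimension-\(2\) subspace (the subspace on which \(N_s=0\)). This is the step I expect to be the main obstacle: making precise that \(\sigma\)-compatibility forces the semisimple part \(N_s\), not just the nilpotent part, to be concentrated on a \(2\)-dimensional block. I would try to do it by extending \(\iota_\sigma\) at a general point of \(D\), and using that the vector fields \(\partial_{t_j}\), \(j\ge 2\), are tangent to \(D\) and hence lift to flows preserving the degeneration.

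\textbf{Step 2: arithmetic.} \(T \in \Sp_{2g}(\Z)\), so the non-trivial eigenvalues of \(T_s\) form a union of Galois orbits of roots of unity, closed under inversion. By Step 1 there are at most two of them, so they are \(\zeta,\ol\zeta\) with \(\varphi(i)\le 2\), i.e.\ \(i \in \{1,2,3,4,6\}\). We exclude \(i=4\) as follows: the two eigenvalues \(\zeta,\ol\zeta\) correspond to \(N_s\)-eigenvalues \(a, 1-a\) shifted into \((-1,0]\). One of these eigenvectors lies in \(F^1\) and the other in the quotient. The symmetry \(\ol{\cV}/F^1 \simeq (F^1)^\vee\) pairs eigenvalue \(\lambda\) with \(-\lambda\) modulo integers, so the weights satisfy \(a \in \frac16\Z\), matching the Kodaira list \(\mathrm{II},\mathrm{III},\mathrm{IV}\) and their starred versions. (Alternatively I would restrict to a transverse disc and compare with the elliptic-curve case.) If this exclusion fails, I would fall back on the statement up to \(i \mid 12\); the paper's ``divides six'' should then come from the \(\Q\)-structure as above.

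\textbf{Step 3: infinite order.} If \(T\) has infinite order, then \(N_u \ne 0\). Since \(N_u\) commutes with \(T_s\) and preserves the \(2\)-dimensional block from Step 1 on which \(T_s\) may be nontrivial, consider the two cases. If \(N_u\) acts nontrivially on that block, then \(T_s\) commutes with a nonzero nilpotent \(2\times 2\) matrix, so \(T_s\) is scalar there and equals \(\pm1\). Otherwise \(N_u\) lives on the \(T_s\)-fixed part; but the rank bound from Step 1 (total residue of rank \(\le 2\)) then forces \(T_s\) to be trivial. In either case \(i \mid 2\).
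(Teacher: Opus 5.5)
Your Steps 2 and 3 (Galois orbits of roots of unity with \(\phi(i)\le 2\), and the Jordan-block argument for infinite order) match the last paragraph of the paper's proof. Step 1, however, is the actual content of the lemma, and it has a genuine gap.

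The mechanism you propose is the residue composed with \(F^1\ol{\cV}\to\ol{\cV}/F^1\ol{\cV}\), i.e.\ the polar part of the cubic form, which by symmetry sits only in the \(dt_1^{\otimes 3}\) slot. This sees only \(\mathrm{Gr}_F\) of the residue. Since \(T_s\) preserves the limit Hodge filtration, that graded piece is induced by \(N_u\) alone. It bounds the rank of \(N_u\), but it vanishes identically when \(T\) has finite order. So it says nothing about the eigenvalues, which live in the \(F^1\to F^1\) block of the residue.

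The missing idea is that \(\iota_\sigma\) is compatible with brackets: for holomorphic vector fields \(u,w\),
\[
\iota_\sigma([u,w]) = \ol\nabla_u\,\iota_\sigma(w) - \ol\nabla_w\,\iota_\sigma(u).
\]
This holds on \(U^\circ\) because the \(1\)-forms \(w\mapsto\langle\gamma,\iota_\sigma(w)\rangle\), for \(\gamma\) a flat section of \(V^\vee\), are closed: they are differentials of \(\int\sigma\) over the \(2\)-chains swept out by \(\gamma\). It extends across \(D\) when \(u,w\) are logarithmic.

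Take \(u=t_1\partial_{t_1}\) and \(w=\partial_{t_j}\), \(j\ge 2\). Then the bracket vanishes, \(\ol\nabla_w\iota_\sigma(u)=t_1\ol\nabla_{\partial_{t_j}}\iota_\sigma(\partial_{t_1})\) vanishes on \(D\), and \(\ol\nabla_u\iota_\sigma(w)|_D\) is the residue applied to \(\iota_\sigma(w)|_D\). So the residue kills \(\iota_\sigma(\partial_{t_2}),\dots,\iota_\sigma(\partial_{t_g})\), a hyperplane in the fiber \(F^1_\infty\) of \(F^1\ol\cV\) at a point of \(D\).

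The residue is invertible on every generalized eigenspace with nonzero eigenvalue, and these eigenspaces split \(F^1_\infty\). Hence at most one dimension of \(F^1_\infty\) carries a nontrivial eigenvalue of \(T\). The nontrivial eigenvalues on \(\ol\cV/F^1\ol\cV\) mirror those on \(F^1_\infty\), which gives the bound of two.

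This is what the paper's computation of \([h_j,h_k]\) encodes: the term \(\frac{\alpha_k}{2\pi i}\frac{d\varphi(h_j)}{\varphi}h_k\) must be holomorphic, and since some \(h_\lambda\) is transverse to \(D\), \(\alpha_k=0\) for \(k\neq\lambda\). Your remark about vector fields tangent to \(D\) points the right way, but without the bracket identity it does not constrain \(N_s\).

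Second, your exclusion of \(i=4\) cannot work, because \(i=4\) actually occurs. Take an elliptic K3 surface with a Kodaira fiber of type \(\mathrm{III}\) or \(\mathrm{III}^*\), which is the case \(g=1\) of this setting. Its local monodromy has order \(4\) with eigenvalues \(\pm\sqrt{-1}\), so \(a=1/4\notin\frac16\Z\); you yourself list \(\mathrm{III}\) among the types supposedly matching \(a\in\frac16\Z\). Moreover, the duality \(\ol\cV/F^1\simeq(F^1)^\vee\) pairing \(a\) with \(-a\) imposes no condition on \(a\) at all.

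The paper's proof only derives \(\phi(i)\le 2\), i.e.\ \(i\in\{1,2,3,4,6\}\), and that is the correct content of the lemma. ``Dividing six'' in the statement should be read this way, so your Step 2 should simply stop at the totient bound.
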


\begin{proof}
Let \(N = \log T = N^{ss} + N^u\). Deligne's canonical extension extends a local frame 
\(v_1, \dots, v_{2g} \in H^0(U^\circ, \cV)\) to a frame 
\(\wt{v}_1, \dots, \wt{v}_{2g} \in H^0(U, \ol{\cV})\), 
\(\wt{v}_j = \exp(\frac{zN}{2\pi i}). v_j\), for \(z\) a local coordinate on the 
universal cover of \(U^\circ\). We can choose the \(v_j\) in such a way that \(N\) 
has the form 
\begin{equation*}
    N = \begin{pmatrix}
        D & S \\ 0 & D'
    \end{pmatrix},\qquad S = \begin{pmatrix}
        I_g & 0 \\ 0 & 0
    \end{pmatrix}
\end{equation*}
This frame has the advantage that it behaves well with respect to the flat connection:
if we set \(\varphi\) to be the local defining equation of \(D\), 
\begin{equation*}
    \nabla \wt{v}_k = \frac{N(\wt{v}_k)}{2\pi i} \frac{d\varphi}{\varphi}
    = \frac{\alpha_k}{2\pi i} \frac{d\varphi}{\varphi} \wt{v}_k + \frac{\beta_k}{2\pi i} \frac{d\varphi}{\varphi} \wt{v}_{g+k}.
\end{equation*}

The limit mixed Hodge structures of \(\cV\) define a locally split 
sub-bundle \(\left<\wt{v}_1, \dots, \wt{v}_g\right>|_D \subseteq \ol{\cV}|_D\).
After possibly shrinking the open subset \(U\), we may lift this to a Deligne frame 
\(h_1, \dots, h_g, h_{g+1}, \dots, h_{2g} \in H^0(U, \ol{\cV})\), 
\begin{equation*}
    F^1 \ol{\cV} = \left<h_1, \dots, h_g \right> \subseteq \ol{\cV},
\end{equation*}
agreeing with \(\left<\wt{v}_1, \dots, \wt{v}_g\right>\) on \(D\).

Now we have two exact sequences 
\begin{equation*}
\begin{tikzcd}
\left< \wt{v}_{g+1}, \dots, \wt{v}_{2g} \right> \arrow{r} \arrow[equal]{d} & \ol{\cV} \arrow{r} \arrow[equal]{d} & \ol{\cV} / \left< \wt{v}_{g+1}, \dots, \wt{v}_{2g} \right> \arrow{d}{g} \\
\left< \wt{v}_{g+1}, \dots, \wt{v}_{2g} \right> \arrow{r} & \ol{\cV}  \arrow{r} & \ol{\cV} / \left< h_{g+1}, \dots, h_{2g} \right>
\end{tikzcd}
\end{equation*}
Thus the two splittings differ by an element of 
\(\Hom(\ol{\cV} / \left< \wt{v}_{g+1}, \dots, \wt{v}_{2g} \right>, \left< \wt{v}_{g+1}, \dots, \wt{v}_{2g} \right>)\).
In other words, the isomorphism \(g\) has the form
\begin{equation*}
    h_j = \wt{v}_j + \sum_{k=g+1}^{2g} g_{jk} \wt{v}_k,\qquad j = 1,\dots, g.
\end{equation*}

Recall from \cite{bakker-schnell23} and our summary above the isomorphism 
\(\cT_U \simeq F^1 \ol{\cV} = \left<h_1,\dots, h_g\right>\).
This isomorphism is compatible with Lie bracket, so we have 
\begin{align*}
    [h_j, h_k] &= \nabla_{h_j} \wt{v}_k - \nabla_{h_k} \wt{v}_j 
    +  \sum_{\ell = g+1}^{2g} h_j(g_{k\ell})\wt{v}_\ell + g_{k \ell} \nabla_{h_j} \wt{v}_\ell   
    - h_k(g_{j\ell})\wt{v}_\ell - g_{j \ell} \nabla_{h_k} \wt{v}_\ell\\
    &= \frac{\alpha_k}{2\pi i} \frac{d\varphi(h_j)}{\varphi} \wt{v}_k - 
    \frac{\alpha_j}{2\pi i} \frac{d\varphi(h_k)}{\varphi} \wt{v}_j + 
    \frac{\beta_k}{2\pi i} \frac{d\varphi(h_j)}{\varphi} \wt{v}_{g+k} - 
    \frac{\beta_j}{2\pi i} \frac{d\varphi(h_k)}{\varphi} \wt{v}_{g+j} + \\
    &\qquad + \sum_{\ell = g+1}^{2g} \left( h_j(g_{k\ell}) - h_k(g_{j\ell}) + 
    \frac{\alpha_\ell}{2\pi i} \left(g_{k\ell}\frac{d\varphi(h_j)}{\varphi} - g_{j\ell}\frac{d\varphi(h_k)}{\varphi}\right) \right) \wt{v}_\ell \\
    &= \frac{\alpha_k}{2\pi i} \frac{d\varphi(h_j)}{\varphi} h_k - \frac{\alpha_j}{2\pi i} \frac{d\varphi(h_k)}{\varphi} h_j 
    + \frac{\beta_k}{2\pi i} \frac{d\varphi(h_j)}{\varphi} \wt{v}_{g+k} - 
    \frac{\beta_j}{2\pi i} \frac{d\varphi(h_j)}{\varphi} \wt{v}_{g+j} + \\
    &\qquad + \sum_{\ell = g+1}^{2g} \left( g_{j\ell} - g_{k\ell} + 
    h_j(g_{k\ell}) - h_k(g_{j\ell}) + \frac{\alpha_\ell}{2\pi i} 
    \left(g_{k\ell}\frac{d\varphi(h_j)}{\varphi} - g_{j\ell}\frac{d\varphi(h_k)}{\varphi}\right) \right) \wt{v}_\ell \\
    &= \frac{\alpha_k}{2\pi i} \frac{d\varphi(h_j)}{\varphi} h_k -
    \frac{\alpha_j}{2\pi i} \frac{d\varphi(h_k)}{\varphi} h_j.
\end{align*}
The last equality follows from the fact that the \(\wt{v}_\ell\) are not contained in 
\(\cT_U\) when \(\ell \geq g+1\).

Now, for \([h_j, h_k]\) to be holomorphic, we need either \(\alpha_k = 0\) or 
\(\varphi\) divides \(d\varphi(h_j)\). The latter is equivalent to \(h_j\) being tangent to \(D\).
But \(D\) is of codimension one, so at least one of the \(h_j\) is not tangent 
to \(D\), say \(h_\lambda\). This forces \(\alpha_k = 0\) for all \(k \neq \lambda\).

Notice additionally that \(g_{jk}\) all vanish on \(D\). Thus, when restricting 
to \(D\), the higher \(\wt{v}_\ell\) terms in the expression above yield 
\begin{align*}
    0 &= \frac{\beta_k}{2\pi i} \frac{d\varphi(h_j)}{\varphi} \wt{v}_{g+k} - 
    \frac{\beta_j}{2\pi i} \frac{d\varphi(h_k)}{\varphi} \wt{v}_{g+j} +
    \sum_{\ell = g+1}^{2g} \left(h_j(g_{k\ell}) - h_k(g_{j\ell}) \right) \wt{v}_\ell
\end{align*}
The sum on the right is holomorphic along \(D\), and \(\varphi\) does not divide
\(d\varphi(h_\lambda)\). Hence \(\beta_k = 0\) for all \(k \neq \lambda\). Hence we may 
lose no generality in writing that \(\lambda=1\); that is, the only possibly non-trivial
eigenvalues are \(\alpha_1\) and the only possibly non-semisimple eigenvector is 
\(\wt{v}_1\).

Observe that \(T\) is now an integer matrix with at most two non-trivial eigenvalues. 
By Grothendieck, all eigenvalues of \(T\) are roots of unity; if \(\zeta_i\) is a
primitive \(i\)-th root of unity, all other primitive \(i\)-th roots of unity 
are in its Galois orbit, and thus also occur as eigenvalues of \(T\). Thus 
if \(\phi\) is Euler's totient function, we must have \(\phi(i) \leq 2\). This 
forces \(i = 1,2,3,4,\) or \(6\). Furthermore, when \(T\) has infinite order,
there is only one non-trivial Jordan block, forcing \(\zeta_i = \ol{\zeta}_i\), 
or equivalently \(\zeta_i = \pm 1\).
\end{proof}

\section{Classification of Lagrangian Mumford--Tate groups}

\subsection{Existence of infinite local monodromy}

Now we work in the compact setting. Let \(X\) be a primitive symplectic variety of 
dimension \(2g\) which admits a maximally varying Lagrangian fibration \(f: X \to B\).
Take \(V\) to be the underlying variation of Hodge structures on \(B^\circ\).

\begin{proposition} \label{infinite-monodromy}
There exists an analytic neighborhood \(U \simeq \Delta^g \subseteq B\) meeting 
the discriminant \(D \subseteq B\) in a codimension one subvariety 
\(D_U := D \cap U \subseteq U\), such that the local monodromy of \(f\) around 
\(D_U\) is infinite.
\end{proposition}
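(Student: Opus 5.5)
We argue by contradiction, following the outline in the introduction. Suppose that around every codimension one component of the discriminant the local monodromy of \(f\) is finite. We first reduce to finite monodromy everywhere in codimension one. Near a general point of a divisorial component \(D_i \subseteq D\) we are in the local situation of \Cref{local-monodromy-eigenvalues}, so the generator \(T\) has finite order dividing \(6\). After a finite base change branched along \(D\) (or, equivalently, passing to a finite-index subgroup of \(\pi_1\) of a neighborhood), the local monodromy becomes trivial around every divisorial component. Components of \(D\) of codimension at least \(2\) impose no local monodromy, since removing them does not change \(\pi_1\) of a small neighborhood.

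The consequence we want is that the period map extends across \(D\), at least outside a codimension \(2\) locus. On the finite cover \(B' \to B\), the VHS has trivial local monodromy at the generic points of \(D'\). By Borel's extension theorem, or by the removable singularity theorem for period maps with finite local monodromy, the map \(\cP': B'^\circ \to \cA_{g,\eta}\) extends holomorphically over the codimension one part of \(D'\), with image in the interior \(\cA_{g,\eta}\) and not on the Baily--Borel boundary. Since the remaining locus has codimension \(\geq 2\) and \(\ol{\cA}_{g,\eta}\) is projective, the extension \(\ol{\cP}: B' \dashrightarrow \ol{\cA}_{g,\eta}\) meets the boundary at most along the indeterminacy. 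One then argues, using properness of \(B'\) and the fact that boundary strata are closed, that the closure of the image avoids the boundary. Concretely, the preimage of the boundary under the graph would have to be a divisor, by a Hartogs/Levi-type argument for the ample Hodge bundle, and that divisor is ruled out by the previous step. Hence \(Z := \ol{\cP(B^\circ)}\) is a compact \(g\)-dimensional subvariety of \(\cA_{g,\eta}\); it is \(g\)-dimensional because \(f\) varies maximally \cite{bakker25}.

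Now invoke \cite{grushevsky-mondello-manni-tsimerman25}: a compact subvariety of \(\cA_{g,\eta}\) of dimension \(g\) must lie in a compact special subvariety. So \(\mathbf{MT}(X_b)\) is anisotropic modulo center, of dimension \(\geq g\) on its Hermitian domain, and acts on a \(2g\)-dimensional weight one representation. We then go through Satake's list (\Cref{table-of-real-forms} and its \(\Q\)-forms) and compare the dimension of the domain with the dimension of the representation. Each possibility is eliminated in one of three ways:
\begin{enumerate}
\item it is forced to contain \(\Sp_{2g}\) or a product of \(\Sp\) factors, which are isotropic over \(\Q\) and hence give noncompact quotients;
\item its domain dimension is less than \(g\);
\item it requires a \(\Q\)-form coming from a division algebra or a CM/totally real field. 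In that case one also uses the Lagrangian structure \(\cT_B \simeq F^1\ol{\cV}\) to rule out unitary Shimura varieties. For \(\mathrm{SU}(p,q)\)-type, for example, the tangent space of the domain is \(\Hom(F^1,\ol{F^1})\) restricted to eigenspaces, and this has the wrong form to be isomorphic to the symmetric \(\mathrm{Sym}^2\)-type tangent map forced by the Lagrangian condition, since the derivative of \(\cP\) lands in \(\mathrm{Sym}^2 F^{1\vee}\) with injective image of rank \(g\).
\end{enumerate}
This contradiction shows that some divisorial component carries infinite local monodromy, and a general point of that component gives the desired polydisk \(U\).

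The main obstacle is this last case analysis: carefully checking each compact special subvariety (especially quaternionic \(\Sp\)-type and unitary-type loci of dimension exactly \(g\)) against the constraint coming from the Lagrangian structure. The extension step, namely showing that the image closure avoids the boundary when only codimension \(\geq 2\) indeterminacy remains, is the secondary delicate point.
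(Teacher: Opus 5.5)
Your skeleton matches the paper's. Without infinite codimension-one monodromy the period image would be a compact \(g\)-dimensional subvariety; \cite{grushevsky-mondello-manni-tsimerman25} then places it in a compact special subvariety \(S\); and every such \(S\) is excluded using Satake's list. The gap is that this last step is the real content of the proof, it is not carried out, and your trichotomy does not cover the hard cases.

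\begin{itemize}
\item Branch (1) is only correct for split symplectic factors. Quaternionic inner forms \(\mathrm{SU}(\bbH^r,\eta)\) with \(\eta\) Hermitian (type III.2) need not be isotropic; already \(\mathrm{Nm}_1(\bbH)\) gives compact Shimura curves. For \(r\ge 2\) their isotropy is an arithmetic fact you do not supply.
\item Branch (2) cannot substitute for it. For type III.2\(_r\) with \(V_\Q=\bbH^r\) one has \(g=2r\le r(r+1)/2=\dim S\) once \(r\ge 3\). For type II\(_r\) (\(\mathrm{SO}^*(2r)\), \(\eta\) skew-Hermitian) one has \(g=2r\le r(r-1)/2=\dim S\) once \(r\ge 5\).
\item Branch (3) addresses only unitary groups, so these quaternionic cases are left uncovered.
\end{itemize}

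The paper disposes of exactly these cases with \Cref{anisotropic-rep-classification}. Since \(H\) is non-compact at every real place (which the paper deduces from maximal variation), the Hasse principle for quadratic, Hermitian and skew-Hermitian forms (cases (q), (uF), (uH), (uD), (skH)) makes \(H\) isotropic unless the form has very small rank. Only under that bound do the dimension counts from \Cref{table-of-algebraic-forms} give contradictions.

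Branch (3) also has no force as written. That \(d\cP\) lands in \(\mathrm{Sym}^2F^{1\vee}\) is just Griffiths transversality plus the polarization, true for every weight one VHS. The tangent space of a unitary subdomain is itself a subspace of \(\mathrm{Sym}^2F^{1\vee}\), so a \(g\)-dimensional image inside it is not ``of the wrong form''. The genuinely Lagrangian input is that, via \(\iota_\sigma\), \(d\cP\) is a totally symmetric cubic \(c\) on \(F^1\); this is the Donagi--Markman condition \eqref{eq-donagi-markman}.

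With that input your idea can be pushed through, and it would be a real alternative to the paper's arithmetic:
\begin{itemize}
\item If a CM field acts, the subdomain's tangent space consists of symmetric forms pairing \(F^1_\tau\) only with \(F^1_{\bar\tau}\). Total symmetry of \(c\) would then require \(\tau=\bar\tau\), so \(c=0\).
\item For the quaternionic types, \(F^1=F^1_W\otimes\C^2\), and the tangent space consists of forms \(S\otimes s\) with \(s\) a fixed non-degenerate form on \(\C^2\). Symmetry again kills \(c\).
\end{itemize}
In both cases this contradicts that \(\cP\) is immersive at a general point. But it has to be stated and checked type by type, and that is precisely what you leave as the ``main obstacle''.

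Finally, your compactness step is not justified. On a log resolution, the locus sent to the Baily--Borel boundary is a union of components with infinite local monodromy. These components can be exceptional over codimension-two points of \(B'\), where \(B'\) or \(D'\) is singular and the local fundamental group of the complement is not generated by meridians of divisorial components. So they are not ruled out by trivial codimension-one monodromy. The paper also takes this implication for granted, but your Hartogs/Levi sentence does not prove it.
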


Before proving this proposition, we remark on an important consequence. 
For one, recall from Hwang--Oguiso \cite{hwang-oguiso09} that \(D \subseteq B\) 
is either empty or codimension one. In the case where \(X\) is primitive symplectic, 
we can rule out the former case entirely.

\begin{corollary} \label{codim-one-discriminant}
    Let \(X\) be a primitive symplectic variety, and let \(f: X \to B\) be a Lagrangian
    fibration. Then the discriminant locus \(D \subseteq B\) of \(f\) has codimension one.
\end{corollary}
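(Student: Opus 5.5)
By Hwang--Oguiso \cite{hwang-oguiso09}, the discriminant \(D \subseteq B\) is either empty or of codimension one. It therefore suffices to show that \(D \neq \emptyset\), and we split into the two cases of the classification: \(f\) maximally varying, or \(f\) isotrivial.

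\emph{Maximally varying case.} Here \Cref{infinite-monodromy} applies directly. It produces a neighborhood \(U\) on which \(D_U = D \cap U\) is a codimension one subvariety carrying infinite local monodromy. In particular \(D \neq \emptyset\), and we are done.

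\emph{Isotrivial case.} Suppose, for contradiction, that \(D = \emptyset\), so that \(f\) is smooth over all of \(B\). We aim to contradict either \(H^1(X, \mathcal{O}_X) = 0\) or the uniqueness of \(\sigma\).

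The plan is as follows.
\begin{enumerate}
    \item Since \(f\) is a smooth isotrivial family of Abelian varieties, the period map is constant. Hence \(V\) is a local system whose monodromy preserves a single Hodge structure, so the monodromy lies in a compact arithmetic group and is therefore finite.
    \item After a finite étale cover \(B' \to B\), the local system \(V\) becomes trivial. The fibration \(X' = X \times_B B' \to B'\) is then a smooth family of Abelian varieties with trivial monodromy and constant period, hence (up to a torsor twist) isotrivial with fiber \(A\).
    \item Consequently \(R^1 f'_* \mathcal{O}_{X'} \simeq \mathcal{O}_{B'}^{\,g}\) is trivial. The Leray spectral sequence then contributes classes to \(H^1(X', \mathcal{O}_{X'})\) coming from \(H^0(B', R^1 f'_*\mathcal{O})\), provided the differential \(d_2\) to \(H^2(B', \mathcal{O}_{B'})\) does not kill them. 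Alternatively, one argues topologically: \(b_1(X') \geq 2g\) should follow from a section-free Leray argument, because the Euler class of a torus fibration obstructs at most part of the fiber cohomology.
    \item To descend to \(X\), use \(\cT_B \simeq F^1\cV\) from Schnell: \(\cT_B\) is a flat bundle with finite monodromy. Then \(B\) is smooth with a flat tangent bundle, so a finite étale cover of \(B\) is a complex torus by Kähler flatness. This contradicts \(B\) being rationally connected, or at least having \(H^1(B, \mathcal{O}) = 0\) and \(\pi_1\) of \(B\) finite-ish.
\end{enumerate}

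Concretely, I expect the cleanest route is the last one. If \(D = \emptyset\), then \(B = B^\circ\) is smooth, and \(\cT_B \simeq F^1\cV\) carries a flat holomorphic structure (since \(F^1\) is flat when the period map is constant). Therefore all Chern classes of \(B\) vanish, and \(B\) is étale covered by a torus. But \(H^1(B, \mathcal{O}_B) \hookrightarrow H^1(X, \mathcal{O}_X) = 0\), and the torus cover yields holomorphic \(1\)-forms on a finite étale cover of \(X\). The same vanishing holds for \(\tilde X\) by the Beauville--Bogomolov type decomposition for primitive symplectic varieties (\(\tilde q(X) = 0\)). This gives a contradiction.

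The main obstacle is this final step: justifying that primitive symplectic varieties have vanishing augmented irregularity, i.e. no finite quasi-étale cover acquires holomorphic \(1\)-forms. I would cite \cite{horing-peternell19} for this, or otherwise obtain it from the uniqueness of \(\sigma\), since pulling back forms from a torus base would produce extra symplectic forms.
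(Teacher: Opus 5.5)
\textbf{The isotrivial case has a genuine gap, at the step you flag.} Your maximally varying case is exactly the paper's, since it is just \Cref{infinite-monodromy}.

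The fact you propose to cite is false as stated. The singular Kummer surface $A/\{\pm 1\}$ is primitive symplectic in the sense used here: it is $\Q$-factorial with canonical singularities, $H^1(\mathcal{O})=0$, and $h^{2,0}=1$. Yet its quasi-\'etale cover $A$ has $q(A)=2$. H\"oring--Peternell give vanishing augmented irregularity only for the irreducible symplectic factors of their decomposition. Those factors are \emph{defined} by a condition on all quasi-\'etale covers, and a primitive symplectic variety need not be one of them.

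Restricting to \'etale covers, uniqueness of $\sigma$ does not rescue the argument. Let $\Gamma$ be the Galois group of $X'=X\times_B B'\to X$. Then $H^0(X,\Omega^{[p]}_X)=H^0(X',\Omega^{[p]}_{X'})^\Gamma$, so uniqueness of $\sigma$ constrains only $\Gamma$-invariant forms on $X'$. You have already shown that $B'$ contributes no invariant $1$-forms, since $q(B)=0$. There need be no invariant $2$-forms from $B'$ either: the free $(\Z/2)^2$-quotients of $E_1\times E_2\times E_3$ of Hantzsche--Wendt type have $h^{1,0}=h^{2,0}=0$.

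Concretely, nothing in your argument excludes a free quotient $X=(A\times B')/\Gamma\to B'/\Gamma$ of the following kind. Let $\Gamma$ act on $H^0(B',\Omega^1)$ through an irreducible representation of real type, and on $H^0(A,\Omega^1)$ through its dual. Such an $X$ has $H^1(X,\mathcal{O}_X)=0$ and $h^{2,0}(X)=1$, the invariant form is Lagrangian on the fibers, and $D=\emptyset$. Your Leray and Euler-class sketches hit the same descent problem, because they also produce classes only on $X'$.

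What would close your argument is an input forcing $\pi_1(B)$ to be finite. Then no finite \'etale cover of $B$ could be a positive-dimensional torus. This holds when $X$ is hyper-K\"ahler, but it is not among the hypotheses. The paper indicates that simple connectivity of $B$ is not available for primitive symplectic varieties.

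By contrast, the paper produces the contradiction on $X$ itself rather than on a cover. It asserts that when $D=\emptyset$ the isotrivial fibration is a product $X\simeq X_b\times B$. Then by K\"unneth, $H^1(X_b,\C)\neq 0$ sits inside $H^1(X,\C)$, contradicting $H^1(X,\mathcal{O}_X)=0$. That assertion tacitly requires the finite monodromy of $V$ to be trivial, which is exactly what your cover $B'\to B$ was introduced to arrange. So your analysis isolates the point where additional input is needed, but as written your proposal does not supply it.
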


\begin{proof}
Since \(X\) is primitive symplectic, \(f\) is either maximally varying or isotrivial. 
By \Cref{infinite-monodromy}, the discriminant of \(f\) has a codimension one component 
whenever \(f\) varies maximally, so we may take \(f\) to be isotrivial. 
Now if \(D = \emptyset\), \(f\) is the trivial fibration and \(X \simeq X_b \times B\).
But by Kunneth, we must now have \(H^1(X, \C) \neq 0\), a contradiction.
\end{proof}

\begin{proof}[Proof of \Cref{infinite-monodromy}]

Our proof will use methods of \cite{grushevsky-mondello-manni-tsimerman25} 
to analyze the period image of \(V\).
Since \(V_\Q\) is irreducible, the period image lands in an 
indecomposable special subvariety of \(\cA_{g, \eta}\). By \cite[Theorem 4.5]{grushevsky-mondello-manni-tsimerman25}, 
the \(g\)-dimensional period image is not Hodge-generic inside of any special indecomposable 
non-compact subvariety. Thus we only need to show that the period image is not
contained in a compact special subvariety.

Let \(G \subseteq [\mathbf{MT}(V_\Q), \mathbf{MT}(V_\Q)]\) be the derived Mumford--Tate 
group of \(V\). By \cite[Corollary 8.5]{borel-tits65}, an algebraic group over a 
number field is anisotropic (contains no positive-dimensional split tori) if and only if 
it contains no non-trivial unipotent elements. By Schmid's nilpotent orbit theorem, 
for \(G\) this is equivalent to defining a compact special subvariety in \(\cA_{g, \eta}\).

Note that \(G\) is anisotropic if and only if its \(\Q\)-simple factors are, since 
a unipotent element in a factor induces a unipotent element in \(G\) and vice versa. Hence 
we may assume without loss of generality that \(G\) is a \(\Q\)-simple algebraic group.
By the classical theorem \cite[Corollary 2.7]{grushevsky-mondello-manni-tsimerman25}, 
we can write \(G = \mathrm{Res}_{K/\Q} H\), for \(K\) a totally real field and
\(H\) a geometrically simple \(K\)-algebraic group. But again, \(G\) is anisotropic 
if and only if \(H\) is anisotropic.

We will prove that \(H\) is not anisotropic. This will show that the image of the 
period map is not contained in a compact special subvariety. To do this, we 
will apply the results of \cite{scharlau85}. Let \(U \subseteq V_K\) the irreducible 
\(K\)-representation induced by the factor \(H \leq G_K\).

\begin{lemma}[\cite{grushevsky-mondello-manni-tsimerman25}, \S 5.1] \label{anisotropic-rep-classification}
    Suppose that \(H = \Aut(U, \eta)^\mathrm{ad}\) is non-compact at all real places. 
    Assume that one of the following conditions holds:
    \begin{enumerate}
    \item[(q)] \(U\) is a \(K\)-vector space, \(\eta\) is symmetric bilinear, 
        and \(\dim_K U \geq 5\). 
    \item[(uF)] \(U\) is an \(L\)-vector space for \(L/K\) a quadratic extension 
        with involution \(*\), \(\eta\) is a \(*\)-hermitian form, and \(\dim_L U \geq 3\).
    \item[(uH)] \(U\) is a right \(\mathbb{H}\)-module for \(\mathbb{H}/K\) a quaternion 
        division algebra with involution \(*\), \(\eta\) is a \(*\)-hermitian form, and 
        \(\dim_\mathbb{H} U \geq 2\).
    \item[(uD)] \(U\) is a right \(\mathbb{D}\)-module, for \(\mathbb{D}\) a non-commutative
        division algebra over \(L\) a quadratic extension of \(K\) with involution \(*\),
        \(\eta\) is a \(*\)-hermitian form, and \(\dim_\mathbb{D} V \geq 3\).
    \item[(skH)] \(U\) is a right \(\mathbb{H}\)-module, for \(\mathbb{H}\) a 
        quaternion division algebra over \(K\) with canonical involution \(*\), 
        \(\eta\) is a skew-hermitian form, and \(\dim_{\mathbb{H}} U \geq 4\).
    \end{enumerate}
    Then \(H\) is not anisotropic.
\end{lemma}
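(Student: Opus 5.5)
The plan is to reduce the isotropy of \(H\) to the isotropy of the form \(\eta\), and then to check that isotropy place by place. First, for each of the classical types listed, \(H = \Aut(U,\eta)^{\mathrm{ad}}\) is \(K\)-isotropic (contains a non-trivial \(K\)-split torus) if and only if \(\eta\) has positive Witt index, i.e.\ there is a nonzero \(u \in U\) with \(\eta(u,u) = 0\). Indeed, such a \(u\) together with a hyperbolic partner \(u'\) gives a cocharacter \(\mathbb{G}_m \to H\) acting by \(t, t^{-1}\) on \(u, u'\) and trivially on \(\langle u,u'\rangle^\perp\). Conversely, a split torus yields weight spaces that are totally isotropic. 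So it suffices to show \(\eta\) is isotropic over \(K\).

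Second, I would invoke the Hasse principle for isotropy of (skew-)hermitian forms over number fields, as collected in \cite{scharlau85}, Chapter 10. These are Hasse--Minkowski for quadratic forms, Landherr's theorem for hermitian forms over quadratic extensions and over division algebras with involution of the second kind, and the theorems of Kneser--Tsukamoto for hermitian and skew-hermitian forms over quaternion algebras. In each case, \(\eta\) is isotropic over \(K\) as soon as \(\eta_v\) is isotropic over \(K_v\) for every place \(v\). The proof then splits into archimedean and non-archimedean places.

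At a real place \(v\), the hypothesis that \(H\) is non-compact at \(v\) means exactly that \(\eta_v\) is indefinite. For the skew-hermitian case this means \(H(K_v)\) is \(\mathrm{SO}^*(2r)\) with \(r\) large rather than compact. An indefinite form over \(\R\), \(\C\), or the Hamilton quaternions is isotropic, so \(\eta_v\) is isotropic there. At a finite place \(v\) I would use the dimension bounds:
\begin{enumerate}
\item In case (q), every quadratic form of dimension \(\geq 5\) over a \(p\)-adic field is isotropic.
\item In case (uF), if \(v\) splits in \(L\) the group is of type \(\GL\) and is isotropic once \(\dim \geq 2\). If \(v\) is inert or ramified, the trace form is a quadratic form of dimension \(2\dim_L U \geq 6\), hence isotropic, and an isotropic trace form forces an isotropic hermitian form.
\item In case (uH), a hermitian form over a local quaternion algebra (split or not) corresponds to a quadratic form of dimension \(4\dim_{\bbH} U \geq 8\), hence is isotropic.
\item In case (uD), split places again give type \(\GL\). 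At non-split places, a central division algebra over a local field carrying an involution of the second kind must be split, so by Morita equivalence we land in case (uF) with dimension \(\geq 3\).
\item In case (skH), Tsukamoto's local theory shows that a skew-hermitian form over a local quaternion division algebra of dimension \(\geq 4\) is isotropic, and in the split case Morita reduces it to a quadratic form of dimension \(\geq 8\).
\end{enumerate}

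The main obstacle I expect is making the local-global step rigorous in cases (skH) and (uD). There the Hasse principle for \emph{classification} can fail, and one must be careful that it still holds for \emph{isotropy}. For (skH), I would first try to prove isotropy directly: use the known fact that skew-hermitian forms of \(\bbH\)-dimension \(\geq 4\) over number fields are isotropic exactly when they are isotropic at all real places (Kneser's Hasse principle for the simply connected groups of type \(D_n\)), rather than a general Hasse principle. For (uD), I would reduce via Landherr to a statement about hermitian forms over \(L\) after verifying the local splitting claims.
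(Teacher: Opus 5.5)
The paper gives no argument for this lemma beyond citing \cite[\S 5.1]{grushevsky-mondello-manni-tsimerman25}, which rests on the local--global theorems of \cite{scharlau85}. Your route is therefore the natural one. Your reduction to the Witt index and your treatment of (q), (uF), (uH) are fine. In those three cases \(\eta(x,x)\) lies in \(K\), so isotropy of \(\eta\) is isotropy of the \(K\)-quadratic form \(x \mapsto \eta(x,x)\) on \(U\), of dimension at least \(5\), and Hasse--Minkowski alone does the job.

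The gap is in (uD) and (skH), where you use the wrong local notion of isotropy. When \(\mathbb{D}_v\) (resp.\ \(\mathbb{H}_v\)) splits, your local checks only show that the Morita-reduced form \(h_v\) over \(L_w\) (resp.\ \(K_v\)) is isotropic. These checks are the reduction to (uF) or to a quadratic form, and indefiniteness at real places. But a global isotropic vector \(u\) spans a free line \(u\mathbb{D}\). Its completion at \(v\) is a totally isotropic subspace of \(h_v\) of dimension \(\delta = \deg \mathbb{D}\) (resp.\ \(2\)). So the correct local condition is that \(h_v\) have Witt index \(\geq \delta\) (resp.\ \(\geq 2\)) at every place. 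This is the condition in Landherr's theorem plus the existence theorem, or in Harder's Hasse principle for varieties of parabolics of a fixed type; the naive principle you invoke is false. At finite places your dimension bounds happen to give the stronger condition. A \(p\)-adic hermitian form of dimension \(r\delta \geq 3\delta\) has Witt index \(\geq r\delta/2 - 1 \geq \delta\), and a \(p\)-adic quadratic form of dimension \(2r \geq 8\) has Witt index \(\geq r-2 \geq 2\). At a real place, however, non-compactness of \(H\) only gives Witt index \(\geq 1\).

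In (uD) this cannot be repaired: the clause is false as written.
\begin{itemize}
\item Take \(K = \Q\), \(L\) imaginary quadratic, and \(\mathbb{D}\) a quaternion division algebra over \(L\) with an involution of the second kind. For example, take \(Q_0 \otimes_\Q L\) with \(Q_0\) definite and ramified at a prime split in \(L\).
\item Take \(\eta\) of rank \(3\) whose reduced real form has signature \((1,5)\).
\item Then \(H(\R) = \mathrm{PU}(1,5)\) is non-compact.
\item An isotropic \(u \in U\) would give a \(2\)-dimensional isotropic subspace of a form of signature \((1,5)\), so \(H\) is \(\Q\)-anisotropic.
\end{itemize}
This is the familiar fact that arithmetic lattices in \(\mathrm{SU}(n,1)\) built from non-commutative division algebras are cocompact. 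The clause needs \(\min(p_v,q_v) \geq \delta\) at every real place.

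In (skH) you also misidentify the real group. At a real place where \(\mathbb{H}\) splits, \(H(K_v)\) is \(\mathrm{SO}(p,2r-p)\), not \(\mathrm{SO}^*(2r)\). Non-compactness allows \(p = 1\), which again falls short: a rank-\(4\) form with real group \(\mathrm{SO}(1,7)\) is anisotropic. The ``Kneser'' fact you propose fails for the same reason. Suppose instead the non-compact real forms are Hermitian, i.e.\ \(\mathrm{SO}(2,2r-2)\) or \(\mathrm{SO}^*(2r)\), as in the Mumford--Tate setting. Then the required Witt-index condition holds at real places, and your (skH) argument goes through with the corrected local--global principle.
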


In our setting, \(V\) varies maximally, so \(H\) must be non-compact at all real places. 
Thus we may rule out all cases one-by-one. We roughly follow the analysis of 
\cite[\S 7.2]{grushevsky-mondello-manni-tsimerman25}. Satake classified all possible 
real forms of \(H\) up to isogeny, and we analyze each case. Let \(S\) denote the 
special subvariety associated to \(G\). 

For our analysis, we recall \cite[Table 3]{grushevsky-mondello-manni-tsimerman25},
providing a classification and numerics for the derived form of the group \(H\); 
see \Cref{table-of-algebraic-forms}. Now let us work by cases.

\begin{table}
\resizebox{1 \textwidth}{!}{
\begin{tabular}{ |c|c | c  | c |c|c|}
  \hline 
  &\textbf{Group $H^{\mathrm{der}}$} & \textbf{Non-compact}  &  
  \textbf{Irreducible}
  & \textbf{Non-self-dual/}\\
 &\textbf{over $K$} & \textbf{Hermitian} &  
 \textbf{complex linear} &   \textbf{/Symplectic/}\\
  &\textbf{(up to isogeny)} & \textbf{symmetric space}  & 
  \textbf{representation $U$} & \textbf{/Orthogonal}\\
  \hline\hline 
\begin{tabular}{c}
  (A$_1$)
\end{tabular} &  
  \begin{tabular}{c}
  $\textrm{Nm}_1(\mathbb{H})$\end{tabular} & 
  \begin{tabular}{c}
  $\mathrm{SU}_{1,1}/\mathrm{S}(\mathrm{U}_1\times\mathrm{U}_{1})$ \\ $\dim=1$
  \end{tabular} & 
  \begin{tabular}{c} (dual) standard $\C^2$\\ $\dim=2$\end{tabular} &
  Symp\\  
\hline 
  \begin{tabular}{c}
  (D$_4$)
\end{tabular} &  
  \begin{tabular}{c}
  ??? \\

  \end{tabular} & 
  \begin{tabular}{c}
  $\mathrm{SO}^*_8/\mathrm{U}_4=\mathrm{SO}_{6,2}/(\mathrm{SO}_6\times\mathrm{SO}_2)$
 \\ $\dim=6$
  \end{tabular} & 
  \begin{tabular}{c} standard \\ $\dim=8$\end{tabular} &
  Orth\\  
\hline 
\begin{tabular}{c}
  (I)$_{p,r\delta-p}$\\
  $r\delta\geq 3$
\end{tabular} &  
  \begin{tabular}{c}
  $\mathrm{SU}(\mathbb{D}^r,\eta)$\\ $\eta$ Hermitian\end{tabular} & 
  \begin{tabular}{c}
  $\mathrm{SU}_{p,r\delta-p}/\mathrm{S}(\mathrm{U}_p\times\mathrm{U}_{r\delta-p})$ \\ $\dim=p(r\delta-p)$
  \end{tabular} & 
  \begin{tabular}{c} (dual) standard $\C^{r\delta}$\\ $\dim=r\delta$\end{tabular} &
  NSD\\  
\hline 
\begin{tabular}{c}
  (I')$_{r\delta-1,1}^c$\\
  $r\delta\geq 4$\\
  $2\leq c\leq r\delta-2$\\
\end{tabular} & 
\begin{tabular}{c}
$\mathrm{SU}(\mathbb{D}^r,\eta)$\\ $\eta$ Hermitian\end{tabular} & 
\begin{tabular}{c}
$\mathrm{SU}_{r\delta-1,1}/\mathrm{S}(\mathrm{U}_{r\delta-1}\times\mathrm{U}_1)$ \\
$\dim=r\delta-1$
  \end{tabular}  & \begin{tabular}{c}$\bigwedge^c\C^{r\delta}$ \\ 
  $\dim=\binom{r\delta}{c}$\end{tabular}
& $\begin{cases} c\neq \frac{r\delta}{2} & \textrm{NSD}\\ c=\frac{r\delta}{2}\textrm{ even} & \textrm{Orth }\\  c=\frac{r\delta}{2}\textrm{ odd} & \textrm{Symp }  \end{cases}$\\  
\hline 
\begin{tabular}{c}
  (II)$_r$ \\ 
  $r\geq 2$ with $r\neq 4$ \end{tabular} &
\begin{tabular}{c}
$\mathrm{SU}(\mathbb{H}^r,\eta)$ \\
$\eta$ skew-Hermitian
\end{tabular}
& \begin{tabular}{c}
$\mathrm{SO}^*_{2r}/\mathrm{U}_r$\\
$\dim=\frac{r(r-1)}{2}$
\end{tabular}
&
\begin{tabular}{c}
standard\\
$\dim=2r$
\end{tabular} 
&
Orth
\\
\hline
\begin{tabular}{c}
  (III.1)$_r$ 
  \\ $r\geq 2$
\end{tabular} &
\begin{tabular}{c}
$\Sp(K^{2r},\eta)$ \\
$\eta$ symplectic
\end{tabular}
& \begin{tabular}{c}
$\Sp_{r}/\mathrm{U}_r$\\
$\dim=\frac{r(r+1)}{2}$
\end{tabular}
&
\begin{tabular}{c}
standard\\
$\dim=2r$
\end{tabular} &
Symp
\\
\hline \begin{tabular}{c}
  (III.2)$_r$ 
    \\ $r\geq 2$
\end{tabular} &
\begin{tabular}{c}
$\mathrm{SU}(\mathbb{H}^r,\eta)$ \\
$\eta$ Hermitian
\end{tabular}
& \begin{tabular}{c}
$\Sp_r/\mathrm{U}_r$\\
$\dim=\frac{r(r+1)}{2}$
\end{tabular}
&
\begin{tabular}{c}
standard\\
$\dim=2r$
\end{tabular} &
Symp
\\
\hline 
\begin{tabular}{c}
  (IV.1)$_{2p}$ 
  \\ $p\geq 3$, $p\neq 4$
\end{tabular} &
\begin{tabular}{c}
$\mathrm{SO}(K^{2p},\eta)$ \\
$\eta$ symmetric
\end{tabular}
& \begin{tabular}{c}
$\mathrm{SO}_{2p-2,2}/(\mathrm{SO}_{2p-2}\times\mathrm{SO}_2)$\\
$\dim=2p-2$
\end{tabular}
&
\begin{tabular}{c}
spin${^\pm(W)}$, $W$ isotropic\\
$\dim=2^{p-1}$
\end{tabular} 
&
$\begin{cases}p\equiv 2(4) & \textrm{Symp}\\ p\equiv 0(4) & \textrm{Orth}\\  p\equiv 1,3(4) &  \textrm{NSD}\end{cases}$
\\
\hline
\begin{tabular}{c}
  (IV.1)$_{2p+1}$ 
     \\ $p\geq 2$
\end{tabular} &
\begin{tabular}{c}
$\mathrm{SO}(K^{2p+1},\eta)$ \\
$\eta$ symmetric
\end{tabular}
& \begin{tabular}{c}
$\mathrm{SO}_{2p-1,2}/(\mathrm{SO}_{2p-1}\times\mathrm{SO}_2)$\\
$\dim=2p-1$
\end{tabular}
&
\begin{tabular}{c}
spin${(W)}$, $W$ isotropic\\
$\dim=2^p$
\end{tabular} 
&
$\begin{cases}p\equiv 0,3(4) &\textrm{Orth}\\ p\equiv 1,2(4) & \textrm{Symp}\end{cases}$
\\
\hline \begin{tabular}{c}
(IV.2)$_{2r}$ 
\\ $r\geq 3$ with $r\neq 4$
\end{tabular} &
\begin{tabular}{c}
$\mathrm{SU}(\mathbb{H}^r,\eta)$ \\
$\eta$ skew-Hermitian
\end{tabular}
& \begin{tabular}{c}
$\mathrm{SO}_{2r-2,2}/(\mathrm{SO}_{2r-2}\times\mathrm{SO}_2)$\\
$\dim=2r-2$
\end{tabular}
&
\begin{tabular}{c}
spin$^{\pm}{(W)}$, $W$ isotropic\\
$\dim=2^{r-1}$
\end{tabular} 
&
$\begin{cases}r\equiv 2(4) & \textrm{Symp}\\ r\equiv 0(4) & \textrm{Orth}\\  r\equiv 1,3(4) &  \textrm{NSD}\end{cases}$
\\
\hline \end{tabular}
}\\

\medskip

\caption{Satake's classification, 
\cite[Table 3]{grushevsky-mondello-manni-tsimerman25}} 
\label{table-of-algebraic-forms}
\end{table}

\begin{description}
    \item[\(H\) has type \(\mathrm{A}_1\)] We must have \(H = \mathrm{Nm}(\mathbb{H})\)
    and \(\dim U = 2\) for all \(i\). 
    Write \(\dim V = 2g = k \dim U = 2kh\). If \(k = 1\), \(H = \SL_{2, K}\) is not 
    anisotropic. Otherwise, \(g \leq \dim S = g/h\) yields a contradiction.
    \item[\(H\) has type \(\mathrm{D}_4\)] \(U_\C\) is a standard orthogonal representation of 
    dimension \(h = 8\). Since \(U\) is not symplectic, its dual must also occur 
    in the decomposition of \(V\), so \(\dim V = 16k\), while \(\dim S = 6\). 
    Thus \(g = 8k \leq 6\), a contradiction.
    \item[\(H\) has type \(\mathrm{I}\)] We are in (uF) and (uD) of \Cref{anisotropic-rep-classification},
    so \(H = \mathrm{SU}(\mathbb{D}^r, \eta)\) with \(r \leq 2\) and \(\dim_K U \leq 4\).
    Here \(\mathbb{D}\) has degree \(\delta \geq 2\) over a CM extension \(L/K\). 
    We may identify \(U_\C \simeq M_{\delta \times \delta}(\C)^{\oplus r}\) as 
    \(H(\C)\)-representations. Moreover, \(U\) is not self-dual, so its dual 
    occurs in the decomposition of \(V_\C\). In particular, 
    \begin{equation*}
        \dim V = 2g = 2k \dim U = 2 k \delta^2 r.
    \end{equation*}
    On the other hand, \(\dim S\) is maximized when the signature of \(H(\C)\) is 
    \((\frac{r\delta}{2}, \frac{r \delta}{2})\). Even in this case,
    \begin{equation*}
        g = k \delta^2 r \leq \dim S \leq k \cdot \frac{\delta r}{2} \cdot \frac{\delta r}{2} = 
        \frac{1}{4} k \delta^2 r^2,
    \end{equation*}
    so that \(r \geq 4\), a contradiction.
    \item[\(H\) has type \(\mathrm{I}'\)] \(H\) is the same as in type \(I\), and 
    \(U \simeq \bigwedge^c \C^{r \delta}\). Now \(\dim U = \binom{r \delta}{c}\), while 
    \(\dim S = r\delta - 1\). Since \(U\) is not self-dual, we cannot have 
    \(g \leq \dim S\) even when \(c = 1\).
    \item[\(H\) has type \(\mathrm{II}\)] Here \(H = \mathrm{SU}(\mathbb{H}^r, \eta)\) with 
    \(r \geq 2\) and \(\eta\) skew-hermitian. We are in the case (skH) of 
    \Cref{anisotropic-rep-classification}, so \(2 \leq \dim_\mathbb{H} U \leq 3\). 
    Note again that \(U\) is not self-dual, so that its dual also occurs in the 
    decomposition of \(V_\C\).
    Thus \(\dim V = 2g = 2k \dim U = 4kr\). On the other hand, \(\dim S = k\frac{r(r-1)}{2}\). 
    Thus \(g = 2kr \leq k\frac{r(r-1)}{2}\), so that \(4 \leq r-1\), a contradiction.
    \item[\(H\) has type \(\mathrm{III}\)] There are two possibilities here. Either 
    \(H = \Sp(K^{2r}, \eta)\), or \(H = \mathrm{SU}(\mathbb{H}^r, \eta')\). The 
    former case is always anisotropic, so we need not consider it. When \(r \geq 2\), 
    we are in case (uH) of \Cref{anisotropic-rep-classification}, so \(S\) is 
    non-compact. When \(r = 1\), \(H\) has type \(A_1\), which we have already disproved.
    \item[\(H\) has type \(\mathrm{IV}\)] \(H\) must be a form of \(\mathrm{SO}_n^\mathrm{ad}\), 
    for \(n \geq 5\). This puts us into the cases (q) and (skH) of \Cref{anisotropic-rep-classification}. 
    In either case, \(H\) is anisotropic.
\end{description}

\end{proof}

\subsection{Splitting the monodromy group also splits the representation}

Let \(S\) be the special Mumford--Tate group 
of \(V_\Q\), and let \(\Lambda = \mathrm{im}(\rho: \pi_1(B^\circ, b) \to \Sp_{2g, \Q})\). 
Define its Zariski closure \(\mathrm{Mon}(V_\Q) = \ol{\Lambda}\); it admits an exact sequence 
\begin{equation*}
    1 \to \mathrm{Mon}(V_\Q)^\circ \to \mathrm{Mon}(V_\Q) \to \Gamma \to 1,
\end{equation*}
where \(\Gamma\) is the component group of \(\mathrm{Mon}(V_\Q)\). By Andr\'e, 
\(\mathrm{Mon}(V_\Q)^\circ\) is a normal subgroup of \(S\). 

\begin{lemma}
    There exists a branched Galois cover of normal projective varieties \(\pi: B' \to B\) 
    such that \(\mathrm{Mon}(\pi^* V_\Q) = \mathrm{Mon}(V_\Q)^\circ\).
\end{lemma}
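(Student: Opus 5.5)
The plan is to take the finite-index subgroup $\Lambda^\circ := \Lambda \cap \mathrm{Mon}(V_\Q)^\circ$ of $\Lambda$ and pull it back along $\rho$. Since $\mathrm{Mon}(V_\Q)^\circ$ has finite index in $\mathrm{Mon}(V_\Q)$ and $\Lambda$ is Zariski dense in $\mathrm{Mon}(V_\Q)$, the composite
\[
\pi_1(B^\circ, b) \to \Lambda \to \Gamma
\]
is surjective onto the finite group $\Gamma$. Its kernel $K := \rho^{-1}(\Lambda^\circ)$ is therefore a finite-index normal subgroup of $\pi_1(B^\circ, b)$, with quotient $\Gamma$. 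By covering space theory, $K$ corresponds to a finite connected Galois étale cover $\pi^\circ: B'^\circ \to B^\circ$ with deck group $\Gamma$.

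Next we compactify. By Grauert--Remmert (the Riemann existence theorem in the analytic setting), the finite étale cover $\pi^\circ$ extends uniquely to a finite branched cover $\pi: B' \to B$ with $B'$ normal. Concretely, $B'$ is the normalization of $B$ in the function field of $B'^\circ$, and $B'^\circ = \pi^{-1}(B^\circ)$. The $\Gamma$-action extends to $B'$ by normality and uniqueness, so $\pi$ is Galois with $B'/\Gamma = B$. Since $B$ is projective, $B'$ is projective as well, because a finite cover of a projective variety is projective: the pullback of an ample bundle under a finite map is ample. This holds in the setting of the paper, where $B$ is the projective base of a Lagrangian fibration of a primitive symplectic variety. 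If only Kählerness of $B$ were available, one would instead argue that $B'^\circ$ is algebraic by Riemann existence and normalize, but I will use projectivity of $B$ as given.

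It remains to compute the monodromy. The pullback $\pi^*V_\Q$ is a VHS on $B'^\circ$, and its monodromy representation is $\rho$ restricted to $\pi_1(B'^\circ, b') = K$. Its image is exactly $\Lambda^\circ$. We claim the Zariski closure of $\Lambda^\circ$ is $\mathrm{Mon}(V_\Q)^\circ$. The closure $\overline{\Lambda^\circ}$ lies in $\mathrm{Mon}(V_\Q)^\circ$, and $\Lambda$ is a finite union of cosets of $\Lambda^\circ$, so $\overline{\Lambda}$ is a finite union of translates of $\overline{\Lambda^\circ}$. Hence $\overline{\Lambda^\circ}$ has finite index in $\mathrm{Mon}(V_\Q)$, so it contains the identity component. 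Equality follows.

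The main obstacle is the compactification step, specifically ensuring that the extended cover is normal and projective (rather than just analytic) and that the Galois action extends. I would handle it by defining $B'$ as the normalization of $B$ in $\C(B'^\circ)$, which makes all three properties formal.
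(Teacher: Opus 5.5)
Your proposal is correct and is essentially the paper's argument: take the finite Galois étale cover of $B^\circ$ corresponding to $\rho^{-1}(\Lambda\cap\mathrm{Mon}(V_\Q)^\circ)$ and normalize $B$ in its function field. You also supply details the paper leaves implicit, namely surjectivity onto $\Gamma$ (hence connectedness of the cover) and the check that $\overline{\Lambda^\circ}=\mathrm{Mon}(V_\Q)^\circ$, and you state the normalization step the right way round, whereas the paper's text reverses it.
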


\begin{proof}
Construct \(\pi: B'^\circ \to B^\circ\) as follows: let \(\Lambda^\circ\) be the preimage 
of \(\mathrm{Mon}(V_\Q)^\circ\) under the natural inclusion. It is the preimage of a normal 
subgroup, and is therefore normal in \(\Lambda\). We have an exact sequence 
\begin{equation*}
    1 \to \Lambda^\circ \to \Lambda \to \Gamma \to 1.
\end{equation*}
Define \(B'^\circ\) to be the \'etale cover corresponding to the surjection 
\(\Lambda \to \Gamma\); that is, \(\Lambda^\circ = \pi_1(B'^\circ, b')\). Because
\(\Lambda^\circ\) is normal in \(\Lambda\), the cover is Galois.
Now we may define \(B'^\circ\) to be the normalization of \(B'^\circ\) in the function 
field \(K(B)\).
\end{proof}

To understand the monodromy of \(\pi^* V_\Q\), we will use Andr\'e's theorem, which 
implies that \(\mathrm{Mon}(V_\Q)^\circ \trianglelefteq S\). The remainder of this 
section will be dedicated to understanding the action of \(S\) on \(V_\Q\).


\begin{proposition}
    There exists an isomorphism of \(\C\)-Hodge structures 
    \(\pi^* V_\C = U_1 \oplus \cdots \oplus U_k\), where \(U_i\) has the compatible 
    structure of an irreducible \(\mathfrak{h}_i\)-representation and 
    \(\mathfrak{h}_j\) acts trivially on \(U_i\) for \(j \neq i\).
\end{proposition}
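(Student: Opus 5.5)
After pulling back along the Galois cover \(\pi: B' \to B\) of the preceding lemma, the algebraic monodromy group \(M := \mathrm{Mon}(\pi^* V_\Q)\) is connected. By Andr\'e's theorem it is normal in the derived Mumford--Tate group. The Mumford--Tate group of \(\pi^*V_\Q\) is the same as that of \(V_\Q\), since \(\pi\) is finite and does not change the very general fiber. The plan is to decompose \(\pi^*V_\C\) using the structure of the semisimple Lie algebra \(\mathfrak{m} = \mathrm{Lie}(M)_\C\).

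\emph{First}, I will observe that \(M\) is semisimple. It is the Zariski closure of a monodromy group of a polarized VHS, so by Deligne's semisimplicity theorem \(\pi^*V_\Q\) is a semisimple \(M\)-representation. Moreover \(M^\circ = M\) is normal in a reductive group and sits inside the derived group, so it is semisimple. I then write \(\mathfrak{m} = \mathfrak{h}_1 \oplus \cdots \oplus \mathfrak{h}_k\) as a sum of simple ideals over \(\C\).

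\emph{Second}, I decompose \(\pi^* V_\C\) into isotypic pieces. By semisimplicity, \(\pi^*V_\C = \bigoplus_\alpha W_\alpha\) with \(W_\alpha\) irreducible, and each \(W_\alpha \simeq W_\alpha^{(1)} \otimes \cdots \otimes W_\alpha^{(k)}\) with \(W_\alpha^{(j)}\) an irreducible \(\mathfrak{h}_j\)-module. Since the local system is flat and \(M\) is the monodromy group, the \(M\)-isotypic decomposition is a decomposition into complex sub-local systems. By Deligne's theorem again, each isotypic component (a \(\mathrm{Hom}_M\)-graded piece) carries an induced \(\C\)-Hodge structure. This gives a splitting as \(\C\)-VHS.

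\emph{Third}, and this is the main obstacle, I must show that each \(W_\alpha\) is acted on nontrivially by exactly one \(\mathfrak{h}_j\). Equivalently, I must rule out tensor products \(W^{(1)} \otimes W^{(2)}\) with both factors nontrivial, and also pair up indices so that distinct summands correspond to distinct simple factors. I would follow the strategy in the outline and use \Cref{infinite-monodromy} together with \Cref{local-monodromy-eigenvalues}. Near a general point of the component \(D_1\), the local monodromy \(T\) has infinite order, and \(N = \log T_u\) has rank exactly one, since there is a single nontrivial Jordan block. After passing to \(B'\) the monodromy \(T\) becomes a power \(T^m\), so the rank stays one. Now \(N \in \mathfrak{m}\) decomposes as \(N = \sum N_j\) with \(N_j \in \mathfrak{h}_j\). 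If \(N_j \neq 0\) acts on a tensor product \(W^{(j)} \otimes W'\) with \(\dim W' \geq 2\), then \(N\) has rank at least \(\dim W' \geq 2\), a contradiction.

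So I need at least one \(N_j\) to be nonzero, and I need the factors to be reachable by \(N\). For this I use the monodromy conjugates of \(N\): the group \(M\) is generated as an algebraic group by the conjugates of unipotent local monodromies, together with other elements. Because the \(M\)-orbit of \(N\) spans an ideal of \(\mathfrak{m}\), I would first pass to the ideal generated by \(N\). I expect the remaining step to need an argument that this ideal is all of \(\mathfrak{m}\). I would try to get it from maximal variation: the factors of \(\mathfrak{m}\) not containing any conjugate of \(N\) would give a sub-VHS with a period map avoiding the rank-one boundary. That sub-VHS would then have to be isotrivial by a \Cref{infinite-monodromy}-type argument, contradicting \(g\)-dimensional variation.

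Once every simple factor acts on each nontrivial \(W_\alpha\) through at most one tensor factor, I group the \(W_\alpha\) according to which \(\mathfrak{h}_j\) acts. Setting \(U_j\) to be that block, I argue it is irreducible by the same rank-one argument. Two non-isomorphic irreducible summands under the same \(\mathfrak{h}_j\) would force \(N_j\) to have rank at least two, since \(N_j\) acts nontrivially on each of them, the Galois or monodromy conjugacy making the actions uniform. This yields \(\pi^*V_\C = U_1 \oplus \cdots \oplus U_k\) as claimed.
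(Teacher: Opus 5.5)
There is a genuine gap, and it sits exactly at the step you flag. You need every simple ideal \(\mathfrak{h}_j\) of \(\mathfrak{m}\) to contain a nonzero rank-one element, and neither of your proposed routes supplies one. Your own rank count shows that the logarithm \(N\) attached to \(D_1\) lies in a \emph{single} simple ideal \(\mathfrak{h}_{j_0}\): two nonzero projections \(N_j, N_{j'}\) would give rank at least two. So the span of the \(M\)-conjugates of \(N\) is \(\mathfrak{h}_{j_0}\), and ``the ideal generated by \(N\) is all of \(\mathfrak{m}\)'' is false as soon as \(k \geq 2\). That is precisely the case of interest (cf.\ \Cref{example-K3n}). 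The fallback does not work either. A simple ideal acting through one tensor factor of \(W^{(1)} \otimes W^{(2)}\) cuts out no sub-VHS of \(\pi^* V\). Moreover, \Cref{infinite-monodromy} relies on having a weight one VHS of rank \(2g\) with \(g\)-dimensional period image, so that \cite[Theorem 4.5]{grushevsky-mondello-manni-tsimerman25} applies; a piece of \(V\) inherits none of this. In any case that proposition produces one infinite-monodromy component, not one per factor.

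The missing input is the irreducibility of \(V_\C\) as a local system on \(B^\circ\) \cite{varvak25}, which the paper feeds into Clifford's theorem. The full monodromy \(\Lambda = \im(\rho)\) normalizes \(M\), so it permutes the simple ideals of \(\mathfrak{m}\). By Clifford it also permutes the \(M\)-isotypic components of \(\pi^*V_\C\) transitively; none is trivial, since the \(M\)-invariants are \(\Lambda\)-stable and \(M \neq 1\). Let \(\mathfrak{a}\) be the sum of the \(\Lambda\)-translates of \(\mathfrak{h}_{j_0}\) and \(\mathfrak{b}\) the complementary ideal; both are \(\Lambda\)-stable. On the irreducible summand \(W = W_{\mathfrak{a}} \otimes W_{\mathfrak{b}}\) where \(N\) acts non-trivially, rank one forces \(\dim W_{\mathfrak{b}} = 1\). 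So \(\mathfrak{b}\) kills the isotypic component of \(W\), and by \(\Lambda\)-stability and transitivity it kills all of \(\pi^* V_\C\). Thus \(\mathfrak{b} = 0\), and every \(\mathfrak{h}_j\) has the form \(\mathrm{Ad}(\gamma)\mathfrak{h}_{j_0}\) with \(\gamma \in \Lambda\), hence contains the rank-one element \(\gamma N \gamma^{-1}\). Your rank argument now finishes with no ``uniformity'' input: a nonzero element of a simple Lie algebra acts non-trivially on every non-trivial irreducible module. Hence each \(\mathfrak{h}_j\) acts non-trivially on exactly one irreducible summand, counted with multiplicity, and is the only simple ideal acting there. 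State the multiplicity-one conclusion explicitly, since your last paragraph only treats non-isomorphic summands. Repaired this way, your route bypasses the paper's Hodge-cocharacter level analysis and its appeal to maximal variation to remove the multiplicity spaces \(B_i\). The paper's own choice of \(T\) ``such that \(N_i\) acts non-trivially'' needs this same transitivity. Finally, the \(\mathfrak{h}_i\) are later used as factors of the derived Mumford--Tate algebra, so add one line. Since the \(U_i\) are pairwise non-isomorphic, the commutant of \(\mathfrak{m}\) in \(\mathrm{End}(\pi^*V_\C)\) is abelian, and Andr\'e's theorem then forces \(\mathfrak{m}\) to be the entire derived Mumford--Tate Lie algebra.
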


\begin{proof}
We have an isomorphism of complex Lie algebras 
\(\mathfrak{s}_\C = \mathfrak{h}_1 \oplus \cdots \oplus \mathfrak{h}_k\) coming from the 
isogeny decomposition. In particular, the decomposed Lie algebra acts on \(V_\C\) by 
\[d\rho: \mathfrak{h}_1 \oplus \cdots \oplus \mathfrak{h}_k \to \mathrm{Aut}(V_\C).\]
Let \(U\) be an irreducible factor of \(\pi^* V_\C\); we can write it as 
a tensor product \(U = W_1 \otimes \cdots \otimes W_k\), where \(W_i\) is an 
irreducible \(\mathfrak{h}_i\)-representation. 

Let \(\mu: \mathbb{G}_{m, \C} \to S(\C)\) be the Hodge cocharacter. Since 
\(V_\Q\) is weight 1, \(\mu(z)\) acts by \(z\) or \(0\). Consequently, 
\(d\mu: \C \to \mathfrak{s}_\C \to \mathrm{Aut}(V_\C)\) acts by matrices with
eigenvalues in \{0, 1\}. Denote by \(\mu_i: \C \to \mathfrak{h}_i \to \Aut(W_i)\)
the projection to the \(i\)-th factor. Suppose for \(z \in \C\) and \(w_i \in W_i\) 
we have \(d\mu(z).w_i = w_i\); then 
\begin{equation*}
    d\mu(z).(w_1 \otimes \cdots \otimes w_k) = 
    (\lambda_1 + \cdots + \lambda_k).(w_1 \otimes \cdots \otimes w_k),
\end{equation*}
where \(\lambda_j\) are the eigenvalues of \(d\mu(z)\) acting on \(W_j\). 
Since \(\lambda_i = 1\) and \(\lambda_1 + \cdots + \lambda_k \in \{0, 1\}\), 
we must have \(\lambda_j = 0\) for all \(j \neq i\). 

\begin{lemma}
    \(\mathfrak{h}_j\) acts trivially on \(W_j\) for \(j \neq i\). 
\end{lemma}

\begin{proof}
\(S\) is the smallest \(\Q\)-algebraic group whose real points contain the 
image of the Hodge cocharacter. As such, if \(X \in \mathfrak{s}_\C\) is a generator 
of the image of \(d\mu\), its projection to \(\mathfrak{h}_j\) must be
\(X_j \neq 0\) for all \(j=1,\dots, k\). Furthermore, \(\mathfrak{h}_j\) is 
simple for all \(j\), so that \([\mathfrak{h}_j, \mathfrak{h}_j] = \mathfrak{h}_j\).
In particular, \(X_j\) is traceless on \(W_j\).

Now if \(W_j\) is non-trivial, \(X_j\) must have at least one positive integer 
eigenvalue and at least one negative integer eigenvalue. Let \(\lambda_j \geq 1\) 
be the largest eigenvalue of \(X_j\) acting on \(W_j\). By the same argument, 
\(\lambda_i \geq 1\). This forces \(X\) to have an eigenvalue of at least two 
on \(U\), a contradiction. Thus the image of the Deligne torus \(X_j\) acts 
trivially on \(W_j\) for \(j \neq i\).

Note that we have not yet shown that \(\mathfrak{h}_j\) acts trivially, only that 
the representation \(W_j\) has level zero. Indeed, one may consider Mumford's 
famous example of a non-trivial family of Abelian 4-folds with Mumford--Tate group isogenous 
to \(\SL_2(\R) \times \mathrm{SU}(2)^2\) \cite{mumford69}. To rule out such examples, 
we will need to leverage some actual geometry native to Lagrangian fibrations.

Let \(D_1 \subseteq B\) be a component of the discriminant with infinite local 
monodromy; such a component exists by \Cref{infinite-monodromy}. Let \(T\) be a 
local monodromy operator for this component; its logarithm \(N\) belongs to 
\(\mathfrak{h}_1 \oplus \cdots \oplus \mathfrak{h}_k\). Since \(\mathfrak{h}_i\) 
acts with level one on \(W_i\), we may choose \(T\) such that the projection 
\(N_i \in \mathfrak{h}_i\) acts non-trivially. By \Cref{local-monodromy-eigenvalues}, 
\(N\) has rank one, so that all other projections \(N_j = 0\), for \(j \neq i\). 

Now observe that every other \(W_j\) is forced to be one-dimensional. Indeed, 
suppose \(\dim W_1 \geq 2\); then for every basis vector \(w_{1, \alpha} \in W_1\),
\begin{equation*}
    N. (w_{1, \alpha} \otimes \cdots \otimes w_i \otimes \cdots \otimes w_k) = 
    w_{1, \alpha} \otimes \cdots \otimes N_i . w_i \otimes \cdots \otimes w_k
\end{equation*}
provides a distinct linearly independent vector in \(\im(N)\), a contradiction.

Finally, since the action of \(\mathfrak{h}_j\) on \(W_j\) is forced to be given by 
characters, it factors through the Abelianization 
\(\mathfrak{h}_j/[\mathfrak{h}_j, \mathfrak{h}_j]\). But \(\mathfrak{h}_j\) is 
simple, so that this is the trivial group. Hence only \(\mathfrak{h}_i\) acts 
non-trivially on \(W_1 \otimes \cdots \otimes W_k\), as needed.
\end{proof}

Thus we may write \(\pi^* V_\C = (B_1 \otimes U_1) \oplus \cdots \oplus (B_k \otimes U_k)\); 
however, this is only a decomposition of infinitesimal representations. Next, we will show that 
the \(B_i \otimes U_i\) are in fact representations of \(S(\C)\). To that end, write 
\begin{equation*}
    \mathfrak{k}_i = \mathfrak{h}_1 \oplus \cdots \oplus \wh{\mathfrak{h}_i}
    \oplus \cdots \oplus \mathfrak{h}_k;
\end{equation*}
this allows us to describe \(B_i \otimes U_i\) as \(\mathrm{Ann}(\mathfrak{k}_i)\). We 
now claim that for all \(g \in S(\C)\), \(g B_i \otimes U_i = B_i \otimes U_i\). To see this, 
choose any \(X \in \mathfrak{k}_i\) and \(u_i \in B_i \otimes U_i\); by Lie theory, 
\begin{equation*}
    0 = g.(X.u_i) = g \frac{d}{dt}\Big|_{t=0} e^{tX} g^{-1} g . u_i = 
    (\mathrm{Ad}(g) X). (g . u_i).
\end{equation*}
But \(\mathrm{Ad}\) is a continuous action of \(S(\C)\) a connected group
on the discrete set of simple ideals of \(\mathfrak{s}_\C\). Hence the action does not 
permute the simple ideals, so that \(\mathrm{Ad}(g) \mathfrak{h}_i = \mathfrak{h}_i\), 
and analogously for \(\mathfrak{k}_i\). This implies that 
\(\mathrm{Ad}(g) X \in \mathfrak{k}_i\), so that 
\(g.u_i \in \mathrm{Ann}(\mathfrak{k}_i) = B_i \otimes U_i\), as needed.

Now we claim that \(\mathfrak{h}_1 = \cdots = \mathfrak{h}_k\). 
Indeed, by \cite{varvak25}, \(V_\C\) is irreducible, so by Clifford's theorem 
\cite{clifford37}, \(\Gamma\) permutes the \(B_i \otimes U_i\) factors
transitively; that is, \((B_i \otimes U_i)^\gamma = B_j \otimes U_j\), for all
\(\gamma \in \Gamma\). But recall that \(U_i = \mathrm{Ann}(\mathfrak{k}_i)\), so 
conjugation by \(\gamma\) induces an outer isomorphism 
\(\mathfrak{k}_i = \mathfrak{k}_j\), forcing \(\mathfrak{h}_i = \mathfrak{h}_j\), 
as needed.

Finally, we claim that \(B_i = \C\) for all \(i\). Indeed, \(V_\C\) varies maximally, 
so \(\pi^* V_\C\) also varies maximally, meaning that no power of a Hodge structure 
shows up in the very general fiber. This shows that 
\(\pi^* V_\C = U_1 \oplus \cdots \oplus U_k\), as needed.
\end{proof}

Notice that \(S(\C)\) is still merely isogenous to a power of a simple group.
We lose no generality in taking \(H^k\) to be the universal cover of \(S(\C)\);
thus \(\pi^* V_\C = U_1 \oplus \cdots \oplus U_k\) acquires the structure of a 
\(H^k\)-representation. Furthermore, the \(i\)-th factor \(H_i\) acts non-trivially 
on \(U_i\) and trivially on \(U_j\) for \(j \neq i\).

\begin{proposition}
    Let \(G = \mathbf{MT}(V_\Q)\) be the full Mumford--Tate group, and \(\wt{G}\) its 
    universal cover, \(\wt{G} = Z \times H_1 \times \cdots \times H_k\). 
    There exists an intermediate extension \(G' \twoheadrightarrow G\) 
    for which the Deligne torus \(h: \mathbb{S} \to G\) lifts to 
    \(h': \mathbb{S} \to G'\). Furthermore, we may write \(G'\) as a diagonal quotient
    \[G' = \wt{G}/(\underline{\zeta}) = 
        \frac{Z \times H_1 \times \cdots \times H_k}{(\zeta_0, \zeta_1, \dots, \zeta_k)}\] 
    for \(\zeta_0 \in Z\) and \(\zeta_i \in Z(H_i)\). 
\end{proposition}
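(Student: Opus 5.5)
The plan is to reduce the lifting problem to a statement about cocharacter lattices of a maximal torus, and to read off the kernel of \(\wt{G} \to G'\) as a finite cyclic group generated by the class of the Hodge cocharacter. First I fix the normalization of \(\wt{G}\). Take \(Z = Z(G)^\circ\), the connected center of \(G\), itself (a torus, so no cover is needed there), and take \(H_i\) to be the simply connected covers of the simple factors of \(S = G^{\mathrm{der}}\) found in the preceding proposition. The multiplication map \(\wt{G} = Z \times H_1 \times \cdots \times H_k \to G\) is then a central isogeny with finite central kernel \(K\). Choose a maximal torus \(T \subseteq G_\C\) containing the image of the Hodge cocharacter \(\mu\), and let \(\wt{T}\) be its preimage in \(\wt{G}_\C\). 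For any central isogeny of tori \(\wt{T} \to T\) we have \(X_*(\wt{T}) \subseteq X_*(T)\) with finite quotient. Intermediate quotients \(\wt{G} \to \wt{G}/K' \to G\) correspond bijectively to intermediate lattices \(X_*(\wt{T}) \subseteq L \subseteq X_*(T)\). Moreover, \(\ker(\wt{T} \to \wt{T}/K') \simeq L / X_*(\wt{T})\) via \(\lambda \mapsto \lambda(e^{2\pi i})\), where \(\lambda(e^{2\pi i})\) means the image of \(1\) under the exponential of the Lie-algebra element \(2\pi i\, d\lambda\).

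Next I define \(L := X_*(\wt{T}) + \Z\mu\) and \(G' := \wt{G}/K'\) with \(K' \leftrightarrow L/X_*(\wt{T})\). By construction \(\mu\) lies in \(X_*(T') = L\), so \(\mu\) lifts to a cocharacter \(\mu'\) of \(G'_\C\). The group \(K' \simeq L/X_*(\wt{T})\) is generated by the class of \(\mu\), so it is cyclic, generated by \(\underline{\zeta} := \tilde\mu(e^{2\pi i})\), where \(\tilde\mu\) is any rational lift of \(\mu\) to \(X_*(\wt{T}) \otimes \Q\). Since \(K'\) is central in \(\wt{G}\), decomposing \(\tilde\mu = (\tilde\mu_0, \tilde\mu_1, \dots, \tilde\mu_k)\) along the factors gives \(\zeta_0 \in Z\) and \(\zeta_i \in Z(H_i)\). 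This is exactly the asserted diagonal quotient \(\wt{G}/(\underline{\zeta})\).

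It remains to promote the lift of \(\mu\) to a lift of the real homomorphism \(h: \mathbb{S} \to G_\R\). Over \(\C\), \(h\) is determined by the pair \((\mu, \ol{\mu})\), so I must check that \(\ol{\mu} \in L\) as well, and that \(L\) is stable under complex conjugation so that \(G'\) and the lift are defined over \(\R\). I would use the weight: \(\mu \cdot \ol{\mu} = w^{-1}\) (up to the sign convention), and \(w\) is a central cocharacter factoring through \(Z = Z(G)^\circ\). Hence \(w \in X_*(Z) \subseteq X_*(\wt{T})\), giving \(\ol{\mu} \equiv -\mu \bmod X_*(\wt{T})\). So \(L\) is conjugation-stable, and \(\mu'\) together with \(\ol{\mu'}\) assembles into \(h': \mathbb{S} \to G'_\R\) lifting \(h\).

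The main obstacle I expect is rationality: \(G'\) should be a \(\Q\)-group, so \(K'\) must be stable under \(\mathrm{Gal}(\ol{\Q}/\Q)\), not just complex conjugation, and Galois conjugates \(\sigma\mu\) need not be congruent to \(\pm\mu\) modulo \(X_*(\wt{T})\). My first attempt would exploit the structure from the preceding proposition, namely that the \(H_i\) are isomorphic and permuted transitively. The aim is to show that the image of \(\mu\) in \(X_*(T)/X_*(\wt{T})\), a subgroup of the finite Galois module \(K\), generates a Galois-stable cyclic subgroup. Concretely, the projection of \(\mu\) to each \(H_i^{\mathrm{ad}}\) is the same minuscule (symplectic) coweight up to the permutation, so its class in \(Z(H_i)\) is \(-1\) in every factor. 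If that fails, the fallback is to replace \(L\) by the Galois-saturation of \(X_*(\wt{T}) + \Z\mu\) and argue directly that the resulting kernel is still generated by the single diagonal element \((\zeta_0, -1, \dots, -1)\).
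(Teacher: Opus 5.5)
Your construction is the paper's. The paper likewise splits \(d\mu = X_0 + X_1 + \cdots + X_k\) along \(\wt G = Z \times H_1 \times \cdots \times H_k\), sets \(\zeta_i = \exp(2\pi i X_i)\), and quotients by the cyclic group generated by \(\underline{\zeta}\); your lattice \(L = X_*(\wt T) + \Z\mu\) is a rigorous form of its remark that \(\mu\) lifts exactly when \(\exp(2\pi i\, d\mu) = 1\) in \(G'\).

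The paper works only in \(\wt G(\C)\), so your weight-cocharacter check of stability under complex conjugation goes beyond what the paper does. The paper does not address the \(\Q\)-rationality you worry about either. Your proposed fix (symplectic coweights and \(Z = \mathbb{G}_m\), hence \(\underline{\zeta} = (-1, \dots, -1)\)) only becomes available after the main theorem, where \(H_i \cong \Sp_{2h}\) is proved, so you would have to defer that part until then.
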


\begin{proof}

In order for the Deligne torus to lift to an intermediate cover \(G' \to G\), 
we must have \(h': \mathbb{S} \to G'\) acting by a torus representation on \(U_i\).
In other words, \(dh' = X_0 + X_1 + \cdots + X_k\) must have integer eigenvalues.
This is the same as requiring \(\exp(2\pi i\cdot dh') = 1 \in G'\).
Thus we define \(\zeta_i = \exp(2\pi i X_i)\), \(i = 0, \dots, k\), and 
take \(\underline{\zeta} = (\zeta_0, \zeta_1, \dots, \zeta_k) \in \wt{G}(\C)\). 
By construction, the Deligne torus will lift to \(G' = \wt{G}/\underline{\zeta}\).
\end{proof}

To better understand why we generally cannot lift the Deligne torus to the universal 
cover \(\wt{G}\), consider the following example.

\begin{example}
Suppose \(G = \GL_2 \times_{\mathbb{G}_m} \GL_2\), acting on \(U_1 \oplus U_2\) by the 
standard representation. Then \(\wt{G}(\C) = \mathbb{G}_m(\C) \times \SL_2(\C)^2\). 
We can decompose the Hodge co-character as 
\begin{equation*}
    \mu = \begin{pmatrix}
        z & 0 & 0 & 0 \\
        0 & 1 & 0 & 0 \\
        0 & 0 & z & 0 \\
        0 & 0 & 0 & 1
    \end{pmatrix},\quad d\mu = \begin{pmatrix}
        1 & 0 & 0 & 0 \\
        0 & 0 & 0 & 0 \\
        0 & 0 & 1 & 0 \\
        0 & 0 & 0 & 0
    \end{pmatrix} = X_0 + X_1 + X_2.
\end{equation*}
Write the trace \(\mathrm{Tr}(X|_{U_1}) = \mathrm{Tr}((X_0 + X_1)|_{U_1}) 
= \mathrm{Tr}(X_0|_{U_1}) = 1\).
But \(X_0\) is central, so it is diagonal and acts by multiplication by \(1/2\).
Consequently,
\begin{equation*}
    X_0 = \begin{pmatrix}
        1/2 & 0 & 0 & 0 \\
        0 & 1/2 & 0 & 0 \\
        0 & 0 & 1/2 & 0 \\
        0 & 0 & 0 & 1/2
    \end{pmatrix},\  X_1 = \begin{pmatrix}
        1/2 & 0 & 0 & 0 \\
        0 & -1/2 & 0 & 0 \\
        0 & 0 & 0 & 0 \\
        0 & 0 & 0 & 0
    \end{pmatrix},\  X_2 = \begin{pmatrix}
        0 & 0 & 0 & 0 \\
        0 & 0 & 0 & 0 \\
        0 & 0 & 1/2 & 0 \\
        0 & 0 & 0 & -1/2
    \end{pmatrix}.
\end{equation*}
Now if we exponentiate these generators, we get 
\begin{equation*}
    \zeta_0 = -1 \in \mathbb{G}_m,\quad \zeta_1 = \zeta_2 = -I_2 \in Z(\SL_2).
\end{equation*}
Notice that \(G' = \wt{G}/(\ul{\zeta}) = G\), so the Deligne torus lifts no further
than \(G\). 
\qed
\end{example}


Now take the derived subgroup of \(G'\):
\begin{equation*}
    [G', G'] = [\wt{G}/N, \wt{G}/N] = \frac{[\wt{G}, \wt{G}] N}{N} = 
    \frac{[\wt{G}, \wt{G}]}{[\wt{G}, \wt{G}] \cap N},
\end{equation*}
where \(N = \left<\underline{\zeta}\right>\). Notice that all the \(\zeta_i\) are 
central, so that the intersection \([\wt{G}, \wt{G}] \cap N\) is trivial. Since 
\([\wt{G}, \wt{G}] = H_1 \times \cdots \times H_k\), we get the following corollary.


\begin{corollary}
    Let \(U_1' \oplus \cdots \oplus U_k'\) be the induced representation of \(G'\)
    on \(\pi^* V_\C = U_1 \oplus \cdots \oplus U_k\). 
    The derived subgroup \([G', G'] = H_1 \times \cdots \times H_k\) is a 
    power of a simple group, and acts on 
    \(U_1' \oplus \cdots \oplus U_k'\) with each \(H_i\) acting non-trivially
    on \(U_i'\) and trivially on \(U_j'\) for \(j \neq i\). 
\end{corollary}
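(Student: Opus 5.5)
The corollary follows by combining the displayed computation of \([G', G']\) with the preceding proposition. First, I would check that the central subgroup \(N = \left<\underline{\zeta}\right>\) meets \([\wt{G}, \wt{G}] = H_1 \times \cdots \times H_k\) trivially. The cleanest route is through the \(Z\)-coordinate: an element of \(N\) lies in the derived subgroup only if its \(Z\)-component is trivial. So I would verify that \(\zeta_0^m = 1\) forces \(\zeta_i^m = 1\) for all \(i\). Here I would use that the weight cocharacter acts by the scalar \(1\) on all of \(V_\C\), so \(X_0\) acts by a scalar. Consequently \(\zeta_0 = \exp(2\pi i X_0)\) has order exactly the denominator of that scalar, namely \(2\).

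Next I would check the same relation on each \(U_i\). There the total \(dh'\) has eigenvalues in \(\{0,1\}\), so \(\exp(2\pi i(X_0 + X_i)) = 1\) on \(U_i\). Since \(U_i\) is a faithful irreducible representation of the simply connected factor \(H_i\), this gives \(\zeta_i = \zeta_0^{-1}\) as an element of \(Z(H_i)\). Hence whenever \(\zeta_0^m = 1\) we also get \(\zeta_i^m = 1\), and so \([\wt{G}, \wt{G}] \cap N = 1\). This justifies the identification \([G', G'] \cong H_1 \times \cdots \times H_k\).

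Second, the action on \(U_1' \oplus \cdots \oplus U_k'\) is simply the action of \(\wt{G}\) on \(\pi^* V_\C\) descended through \(G'\). Therefore the statements "\(H_i\) acts non-trivially on \(U_i'\) and trivially on \(U_j'\) for \(j \neq i\)" are inherited directly from the preceding proposition, where \(\mathfrak{h}_j\) annihilates \(U_i\) for \(j \neq i\). That \(H_1, \dots, H_k\) are copies of a single simple group comes from the earlier argument that \(\mathfrak{h}_1 = \cdots = \mathfrak{h}_k\), via transitivity of \(\Gamma\) on the factors. Simple connectivity then makes the \(H_i\) isomorphic as groups, not merely as Lie algebras.

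The main obstacle is the first step. The computation in the text asserts the trivial intersection merely because the \(\zeta_i\) are central, which is not sufficient on its own, since \(Z(H_i)\) is itself central in \(H_i\). To close this gap I would use the scalar nature of \(X_0\), as in the \(\GL_2 \times_{\mathbb{G}_m} \GL_2\) example. If that argument fails in some case, I would fall back on the weaker isogeny statement: \([G', G']\) is a quotient of \(H_1 \times \cdots \times H_k\) by a finite central subgroup. That weaker version suffices for the classification up to isogeny.
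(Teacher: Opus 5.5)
You are right that the paper's one-line justification is insufficient. The element \(\underline{\zeta}^m = (\zeta_0^m, \zeta_1^m, \dots, \zeta_k^m)\) lies in \([\wt{G}, \wt{G}] = 1 \times H_1 \times \cdots \times H_k\) as soon as \(\zeta_0^m = 1\), and centrality of the \(\zeta_i\) says nothing about their orders.

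Your repair, however, has a gap of the same kind. The identity \(\exp(2\pi i (X_0 + X_i))|_{U_i} = 1\) only determines how \(\zeta_i\) acts on \(U_i\), i.e. \(\zeta_i\) modulo \(\ker(H_i \to \GL(U_i))\). To get \(\zeta_i^m = 1\) in \(H_i\) you need \(U_i\) to be a faithful representation of the simply connected group \(H_i\). You assert this without proof, and it fails for cases in Satake's list that have not been excluded at this point.

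For example, take \(H_i = \SL_6\) acting on the irreducible symplectic representation \(U_i = \bigwedge^3 \C^6\). This is type \(\mathrm{I}'\) in \Cref{table-of-algebraic-forms}, with \(r\delta = 6\), \(c = 3\), and real form \(\mathrm{SU}(5,1)\). Then \(X_i = \mathrm{diag}(\tfrac16, \dots, \tfrac16, -\tfrac56)\) acts on \(U_i\) with eigenvalues \(\pm\tfrac12\). So \(X_0 = \tfrac12\) and \(\zeta_0 = -1\) has order \(2\), whereas \(\zeta_i = e^{\pi i/3} I_6\) has order \(6\) in \(Z(\SL_6)\). Your relation \(\zeta_i|_{U_i} = \zeta_0^{-1}|_{U_i}\) holds, but it sees \(\zeta_i\) only modulo the kernel \(\mu_3\) of \(\SL_6 \to \GL(\bigwedge^3 \C^6)\). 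Consequently \(\underline{\zeta}^2 = (1, e^{2\pi i/3} I_6, \dots)\) is a non-trivial element of \(N \cap [\wt{G}, \wt{G}]\), and \([G', G']\) is the quotient of \(\prod_i H_i\) by a central subgroup of order \(3\).

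So the equality \([G', G'] = H_1 \times \cdots \times H_k\) cannot be proved at this stage by any argument. It becomes true only once one knows that \(H_i = \Sp_{2h}\) acts by its standard, faithful representation, where \(\zeta_i = -I\). The paper establishes that only in the subsequent theorem.

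What your proposal does correctly establish is the fallback. Namely, \([G', G'] = (H_1 \times \cdots \times H_k)/(N \cap [\wt{G}, \wt{G}])\), where this intersection is finite and central. Your second paragraph is also fine: each \(H_i\) acts non-trivially on \(U_i'\) and trivially on \(U_j'\), and the \(H_i\) are isomorphic. This isogeny version is what survives, and it is compatible with \Cref{mt-classification}, which is stated up to isogeny.

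One smaller point: your reason for \(X_0\) being scalar is a non sequitur. \(X_0\) is the central part of \(d\mu\), not of the weight cocharacter, and it acts on \(U_i\) by \(\dim F^1 U_i / \dim U_i\). It equals \(\tfrac12\) here only for a further reason: \(F^1 U_i\) is isotropic in the \(\psi\)-orthogonal summand \(U_i\). Your argument never actually needs this anyway; it only uses that \(\zeta_0^m = 1\) implies \(\zeta_0^m|_{U_i} = 1\).
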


\subsection{Proof of the main theorem}

Now we are ready to prove the main theorem. 

\begin{theorem}
    Let \(X\) be a primitive symplectic variety of dimension \(2g\), and let 
    \(f: X \to B\) be a Lagrangian fibration. Let \(G\) be the derived Mumford--Tate 
    group of \(f\). 
    \begin{equation*}
        G = \begin{cases}
            (\Sp_{2g/k})^k & : f \text{ varies maximally,} \\
            1 & : f \text{ is isotrivial with fiber isogenous to } E^g\\
            & \qquad \text{ for  } E \text{ a CM elliptic curve,}\\
            \SL_2 & : \text{ otherwise.}
        \end{cases}
    \end{equation*}
\end{theorem}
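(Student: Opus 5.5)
We split into the isotrivial and maximally varying cases; since \(X\) is primitive symplectic, \(f\) is one or the other, by \cite{bakker25}. In the isotrivial case we use the results of \cite{varvak25}. There the very general fiber is isogenous either to \(E^g\) for an elliptic curve \(E\), or to a fiber whose Mumford--Tate group is \(\GL_2\) acting diagonally. For a CM curve \(E\) the Mumford--Tate group is the torus \(\mathrm{Res}_{K/\Q}\mathbb{G}_{m,K}\), whose derived group is trivial. Otherwise the derived group is \(\SL_2\). So this case reduces to quoting the earlier paper; the only care needed is to check that the diagonal embedding \(E^g\) gives \(\mathbf{MT}(E^g)=\mathbf{MT}(E)\).

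Now assume \(f\) varies maximally. First we pass to the Galois cover \(\pi: B' \to B\) of the Lemma, so that the algebraic monodromy is connected. The Mumford--Tate group of the very general fiber is unchanged under this finite cover. By the Corollary above, after passing to the lift \(G'\), the derived group is \([G',G'] = H_1\times\cdots\times H_k\), with each \(H_i\) acting only on \(U_i'\), and the \(H_i\) are mutually isomorphic. Each \(U_i\) is a weight one \(\C\)-sub-VHS of \(\pi^*V_\C\). We set \(h=\dim U_i/2\), which is independent of \(i\) because \(\Gamma\) permutes the \(U_i\) transitively. Then \(g = hk\).

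Next we identify \(H_i\) with \(\Sp_{2h}\). Each \(U_i\) carries a polarization, namely the restriction of \(\eta\). This needs the \(U_i\) to be mutually \(\eta\)-orthogonal, and that holds because the \(H_i\) act on them by distinct nontrivial factors. So \(H_i\subseteq \Sp(U_i)\). By Andr\'e's theorem the monodromy is normal in \(S\). By \Cref{infinite-monodromy} some \(H_i\) contains a rank one unipotent (by \Cref{local-monodromy-eigenvalues}), which is a transvection, and by transitivity every \(H_i\) does. Then \(H_i(\C)\) is a simple group with an irreducible symplectic representation \(U_i\) containing a transvection. A transvection orbit spans \(U_i\) by irreducibility, so Deligne's Lemma \(4.4.2^a\) gives \(H_i=\Sp(U_i)\).

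Finally we descend back to \(\Q\). The derived Mumford--Tate group is a \(\Q\)-form of \(\Sp_{2h}^k\). As in the proof outline, the Galois-orbit/rank argument shows the factors \(U_i\) are defined over \(\Q\); otherwise a single degeneration would have rank \(\ge 2\), contradicting \Cref{local-monodromy-eigenvalues}. Hence \(G=(\Sp_{2g/k})^k\) over \(\Q\).

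The main obstacle is this last descent. The earlier propositions work over \(\C\) and only up to isogeny, so one must argue carefully that \(\Gal(\bar\Q/\Q)\) cannot permute the \(U_i\) nontrivially. I would try to argue it using the fact that the \(\Q\)-simple factors of \(G\) are restrictions of scalars \(\mathrm{Res}_{K/\Q}\), combined with the rank one degeneration along \(D_1\).
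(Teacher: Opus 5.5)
Your proof is correct, but at the key step it takes a different route from the paper. The paper works in this order:
\begin{enumerate}
\item It shows that the splitting \(\pi^*V=U_1\oplus\cdots\oplus U_k\) is defined over \(\Q\), using the Galois orbit together with rank-one degeneration.
\item It asserts that \(H(\C)\) is of type \(\SL_{2h}\), \(\Sp_{2h}\) or \(\mathrm{SO}_{2h}\), and discards \(\SL\) and \(\mathrm{SO}\) because \(U\) comes from abelian varieties.
\item It excludes nontrivial inner forms \(\Sp_r(\mathbf{D})\), since isotropic \(\mathbf{D}\)-subspaces would force the monodromy logarithm along \(D_1\) to have rank at least \(\dim_\Q\mathbf{D}\).
\end{enumerate}

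You instead identify the factors over \(\C\) directly. The unipotent part \(T_u\) of the infinite local monodromy is a symplectic transvection in \(\mathrm{Mon}^\circ\subseteq S^{\mathrm{der}}\). Rank one puts it in a single \(H_i\), and monodromy conjugation spreads it to every \(H_i\). Deligne's lemma \cite{deligne80} then gives \(H_i(\C)=\Sp(U_i)\). For hypothesis (2) of that lemma, say explicitly that the \(H_i\)-conjugates of the transvection group generate a nontrivial connected normal subgroup, hence all of \(H_i\) by simplicity.

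Your route has two advantages.
\begin{itemize}
\item It excludes in one stroke every other simple group with an irreducible symplectic representation, such as \(\wedge^3\) of \(\SL_6\) or the symplectic spin representations in Satake's list. The paper dispatches these only by the brief remark about abelian varieties.
\item It makes the inner-form step unnecessary. Once \(U_i\) is \(\Q\)-rational, \(H_i\) is a connected \(\Q\)-subgroup of \(\Sp(U_{i,\Q},\eta)\) with the same complexification, hence equal to it and isomorphic to \(\Sp_{2h,\Q}\). You should write out this last step rather than just asserting \(G=(\Sp_{2g/k})^k\) over \(\Q\).
\end{itemize}

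The descent you flag as the main obstacle is exactly the paper's argument, and in your setup it is short. The line \(\im(T_u-1)\) is a \(\Q\)-line \(\Q\delta\), since the Jordan decomposition of \(T\in\Sp_{2g}(\Z)\) is rational, and it lies in a single \(U_i\). Every Galois conjugate of \(U_i\) is again some \(U_j\), because the decomposition is \(\mathrm{Ann}\) of sums of simple ideals of the \(\Q\)-Lie algebra \(\mathfrak{s}\). That conjugate contains \(\delta\), so it equals \(U_i\). The \(\Q\)-rational monodromy permutes the \(U_j\) transitively and commutes with Galois, so all the \(U_j\) are defined over \(\Q\).

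The paper's route has the merit of staying inside the Satake classification framework already used for \Cref{infinite-monodromy}, and of treating the \(\Q\)-form question head-on through parabolic subgroups. Your route trades that for a single classical group-theoretic fact that the paper itself already quotes for the Beauville--Mukai example.
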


\begin{proof}
We first deal with the latter two cases. By \cite{bakker25}, if \(f\) does not vary 
maximally, it is isotrivial, so we can refer to \cite{kim-laza-martin24}. Namely, 
the fiber of \(f\) is isogenous to \(E^n\), for \(E\) an elliptic curve. 
Now \(G\) is \(\SL_2\) unless \(E\) is CM, in which case \(G\) is trivial. Thus 
we may assume that \(f\) varies maximally. 

As before, take \(\pi: B' \to B\) be the cover over which 
\(\pi^* V_\Q = U_1 \oplus \cdots \oplus U_k\) splits, and let \(\Gamma\) be the automorphism 
group of the cover, acting transitively on the \(U_i\). We claim that each of the 
\(U_i\) is an isotypic factor; that is, \(U_i \ncong U_j\) for \(i \neq j\). To see that this holds,
suppose that \(U_1 \cong U_2\). By \Cref{infinite-monodromy}, there exists a component 
\(D_1\) of the discriminant which has non-finite local monodromy. Without loss of 
generality, suppose that \(U_1\) degenerates along \(D_1\). Then \(U_2\) also degenerates 
along \(D_1\). But by \Cref{local-monodromy-eigenvalues}, every general degeneration is 
of nilpotent rank one, a contradiction.

Hence we must have \(U_i \ncong U_j\) for \(i \neq j\). We now claim that the 
splitting is defined over \(\Q\). Indeed, the splitting occurs over some number field 
\(K\), so we have a \(\Q\)-irreducible factor \(W \subseteq V_\Q\) which splits as 
\(W_K = U_1 \oplus \cdots \oplus U_\ell\). But now the Galois group \(\mathrm{Gal}(K/\Q)\) 
transitively permutes the \(U_1, \dots, U_\ell\). Now as above, suppose
\(U_1\) degenerates over \(D_1 \subseteq D\). The limit variation of mixed Hodge structures
of \(V\) over \(D_1\) is defined over \(\Q\), while that of \(U_1\) is not; thus 
any element of the Galois orbit of \(U_1\) must degenerate over \(D_1\) like \(U_1\).
But by \Cref{local-monodromy-eigenvalues}, at most one of the \(U_1\) can degenerate, 
forcing \(\ell = 1\), and therefore \(K = \Q\).

Now any irreducible factor of \(G\) is defined over \(\Q\). Set \(U = U_1\) and \(H = H_1\),
and consider the action of \(H\) on \(U\). The group \(H\) is geometrically simple, 
so that \(H(\C)\) is one of \(\SL_{2h}(\C)\), \(\Sp_{2h}(\C)\), or \(\mathrm{SO}_{2h}(\C)\).
However, we may rule out \(\SL_{2h}(\C)\) and \(\mathrm{SO}_{2h}(\C)\), since 
\(U\) comes from a family of Abelian varieties of dimension \(h\).

Recall that \(\Sp_{2h}(\C)\) has no non-trivial outer automorphisms. Thus 
\(H\) is an inner form of \(\Sp_{2h}\). Since \(U\) comes from a family of 
Abelian varieties (up to isogeny), \(H(\R) = \Sp_{2h}(\R)\); the only alternative 
would be \(\Sp(p, q)\), which is non-Hermitian when \(q \neq 0\), 
for which \(U\) is isotrivial. Now \(H\) must be of the form \(\Sp_r(\mathbf{D})\), for 
\(\mathbf{D}\) a division algebra with center \(\Q\), so that the fibers of \(U\) have 
the form \(U_b = \mathbf{D}^r\). Maximal parabolic subgroups of \(H\) are in bijection 
with totally isotropic \(\mathbf{D}\)-subspaces \(W \subseteq U_b\) via orbit--stabilizer. 
In particular, since \(\dim_\Q \mathbf D = d > 1\), the rank of the local monodromy logarithm of 
\(U\) around \(D_1\) is at least \(d\), contradicting \Cref{local-monodromy-eigenvalues}.
Therefore \(H = \Sp_{2h}\), and \(G = \Sp_{2h}^k\), concluding the proof. 
\end{proof}

Note that this allows us to compute the center of the special Mumford--Tate group. 
Since special endomorphisms are ruled out, the only endomorphisms which can 
commute with the special Mumford--Tate group are rational scalars. 
If we take \(c\) to be such a scalar, it satisfies 
\begin{equation*}
    c^2 \psi(v, w) = \psi(c I_{2g}.v, c I_{2g}.w) = \psi(v, w),
\end{equation*}
with respect to the polarization \(\psi\), forcing \(c^2 = 1\) and \(c = \pm 1\). 
However, since the special Mumford--Tate group is generated by the derived Mumford--Tate 
group and the identity component of the center, it is forced to be equal to \(G\).
This allows us to easily compute the actual Mumford--Tate group.

\begin{corollary} \label{mt-classification}
    Let \(X\) be a primitive symplectic variety of dimension \(2g\), and let 
    \(f: X \to B\) be a Lagrangian fibration. The Mumford--Tate group of 
    \(f\) is isogenous to 
    \begin{equation*}
        \mathbf{MT}(X_b) \sim \begin{cases}
            (\mathbb{G}_m \times \Sp_{2g/k}^k) / \{\pm 1\} & : f \text{ varies maximally, } 
            k \text{ divides g,}\\
            \mathrm{Res}_{K/\Q} \mathbb{G}_{m, K} & : 
            f \text{ is isotrivial with fiber isogenous to } E^g \text{ for } \\
            & \quad  E \text{ a CM elliptic curve with endomorphism field } K\\
            \GL_2 & : \text{ otherwise.}
        \end{cases}
    \end{equation*}
\end{corollary}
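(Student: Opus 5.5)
The plan is to deduce \Cref{mt-classification} from the preceding theorem on the derived Mumford--Tate group. We combine it with the remark just above the statement: the special Mumford--Tate group is generated by its derived subgroup and the identity component of its center. Throughout we use that \(\mathbf{MT}(X_b) = \mathbb{G}_m \cdot \mathbf{SMT}(X_b)\), where the \(\mathbb{G}_m\) is the weight homothety. Indeed, the Deligne torus is generated by the weight cocharacter together with the unit circle, whose image lies in \(\mathbf{SMT}\). So it suffices to identify the special Mumford--Tate group in each case and then adjoin scalars.

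\emph{Maximally varying case.} By the preceding theorem the derived group is \(\Sp_{2h}^k\) with \(h = g/k\). Here \(k\) divides \(g\) because the factors \(U_i\) are permuted transitively, hence all have dimension \(2h\), and \(\dim V = 2g = 2hk\). The center of \(\mathbf{SMT}(X_b)\) commutes with the derived group, which acts irreducibly and non-isomorphically on the \(U_i\). By Schur's lemma and the fact that the splitting is defined over \(\Q\), the center acts by a rational scalar on each \(U_i\). Transitivity of \(\Gamma\), together with preservation of the polarization \(\psi\), forces \(c^2 = 1\) as in the displayed computation. The identity component of the center is therefore trivial, so \(\mathbf{SMT}(X_b) = \Sp_{2h}^k\). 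Adjoining the homotheties gives \(\mathbf{MT}(X_b) = \mathbb{G}_m \cdot \Sp_{2h}^k\). The intersection \(\mathbb{G}_m \cap \Sp_{2h}^k\) consists of scalars \(c\) with \(c^2=1\), namely \(\{\pm 1\}\) embedded diagonally, which yields \((\mathbb{G}_m \times \Sp_{2g/k}^k)/\{\pm1\}\).

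\emph{Isotrivial cases.} By \cite{bakker25} and \cite{kim-laza-martin24}, the fiber is isogenous to \(E^g\), so \(\mathbf{MT}(X_b) = \mathbf{MT}(E^g) = \mathbf{MT}(E)\) acting diagonally, as in \Cref{example-K3n}. If \(E\) has no CM, this is \(\GL_2\). If \(E\) has CM by \(K\), it is the torus \(\mathrm{Res}_{K/\Q}\mathbb{G}_{m,K}\), consistent with the derived group being trivial. This reduces to the classical computation of Mumford--Tate groups of elliptic curves.

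The main obstacle is the center computation in the maximally varying case. One must make sure that no nontrivial torus, for instance one acting by different characters on different \(U_i\), centralizes \(\Sp_{2h}^k\) inside \(\mathbf{SMT}\). My approach is to use that \(\mathrm{End}_{\mathbf{SMT}}(V_\Q)\) equals the Hodge endomorphism algebra. The only non-scalar candidates are the projectors onto the \(U_i\), and those are \(\Gamma\)-permuted and so not monodromy-invariant on \(B\). The remaining candidate is the diagonal scalars, which the polarization cuts down to \(\pm1\). The result is stated only up to isogeny, so any ambiguity in the finite central quotient is harmless.
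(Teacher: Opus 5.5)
Your overall route is the paper's: take the derived group \(\Sp_{2h}^k\) from the preceding theorem, use the polarization to show the identity component of the center of the special Mumford--Tate group is trivial, and adjoin the weight homotheties. The isotrivial cases are handled the same way. The problem is the step you yourself flag as the main obstacle: your reason for discarding the non-scalar part of the commutant is wrong.

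\(\mathrm{End}_{\mathbf{SMT}}(V_\Q)\) is the commutant of the Mumford--Tate group of the single Hodge structure \(V_b\). It has nothing to do with invariance under the monodromy of \(B\). The \(U_i\) are \(\Q\)-sub-Hodge structures of \(V_b\); this is exactly why \(X_b\) is isogenous to \(A_1\times\cdots\times A_k\). So the projectors onto the \(U_i\) \emph{are} Hodge endomorphisms of \(V_b\), and \(\mathrm{End}_{\mathbf{SMT}}(V_\Q)\supseteq\Q^k\). Monodromy of \(B\) merely permutes these projectors, and they are fixed by \(\pi_1(B'^\circ)\). You are conflating endomorphisms of the VHS over \(B\) with Hodge endomorphisms at a very general point. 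For the same reason, ``transitivity of \(\Gamma\)'' does not force a single central element to act by the same scalar on every \(U_i\). \(\Gamma\) normalizes the center and only permutes the tuple \((c_1,\dots,c_k)\).

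The conclusion is still true; the fix is to apply the polarization factor by factor, not just to diagonal scalars. Since \(\psi\) is \(\Sp_{2h}^k\)-invariant, the induced map \(U_i\to U_j^\vee\) is equivariant. Because \(U_j^\vee\cong U_j\) and \(U_i\ncong U_j\) for \(i\neq j\), the decomposition \(V=U_1\oplus\cdots\oplus U_k\) is \(\psi\)-orthogonal and each \(\psi|_{U_i}\) is non-degenerate. A central element acting by \(c_i\) on \(U_i\) preserves \(\psi\) only if \(c_i^2=1\) for each \(i\) separately. Hence the center lies in \(\{\pm1\}^k\), its identity component is trivial, and \(\mathbf{SMT}(X_b)=\Sp_{2h}^k\). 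The rest of your argument then goes through. The paper's own one-line remark, that only rational scalars commute with the special Mumford--Tate group, passes over the same projectors; the factorwise computation is what actually closes the step.
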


\subsection{An application to a question of Li--Tosatti} 

We will first need to make our notation a bit more precise. When we refer to 
``the'' special K\"ahler metric \(\omega_\mathrm{SK}\), we will simply fix one, 
since they are unique up to scaling. For any sufficiently small analytic neighborhood 
\(U \subseteq B^\circ\) there is a local coordinate system \(z_1, \dots, z_g\) 
such that the entries of the period matrix \(\mathbf{T}\) satisfy the 
Donagi--Markman symmetry condition \cite{donagi-markman94}:
\begin{equation} \label{eq-donagi-markman}
    \frac{\partial \tau_{ij}}{\partial z_k} = \frac{\partial \tau_{ik}}{\partial z_j}, 
    \quad 1 \leq i, j, k \leq g.
\end{equation}

\begin{proposition} \label{constant-diagonal-entry}
    Let \(X\) be primitive symplectic, and \(f: X \to B\) be a Lagrangian fibration 
    with period map \(\cP: B^\circ \to \cA_{g, \eta}\) and period matrix \(\mathbf{T}\), written as
    \begin{equation*}
        \cP(b) = \begin{pmatrix}
            \mathbf{T}(b) \\ I_g
        \end{pmatrix},\qquad \mathbf{T} = \begin{pmatrix}
            \tau_{11} & \cdots & \tau_{1g} \\ 
            \vdots & \ddots & \vdots \\ 
            \tau_{1g} & \cdots & \tau_{gg}
        \end{pmatrix}
    \end{equation*}
    Suppose that \(\mathbf{T}\) has a constant diagonal entry \(\tau_{ii}\)
    in some analytic neighborhood \(U \subseteq B^\circ\). Then \(f\) is isotrivial.
\end{proposition}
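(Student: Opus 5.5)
We argue by contradiction: assume \(f\) is not isotrivial, so it varies maximally, and \Cref{mt-classification} gives \(\mathbf{MT}(X_b) \sim (\mathbb{G}_m \times \Sp_{2h}^k)/\{\pm 1\}\) with \(g = hk\). The plan is to show that a locally constant diagonal entry \(\tau_{ii}\) forces the period image into a proper special subvariety of this Mumford--Tate domain, or else forces the derivative of the period map to degenerate.

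The first step uses the Donagi--Markman symmetry \eqref{eq-donagi-markman}. Since \(\partial \tau_{ij}/\partial z_k\) is symmetric in all three indices, the differential of \(\cP\) is a cubic form \(c = \sum c_{ijk}\, dz_i dz_j dz_k\) with \(c_{ijk} = \partial_k \tau_{ij}\). If \(\tau_{ii}\) is constant on \(U\), then \(c_{iik} = 0\) for all \(k\). By symmetry this also gives \(\partial_i \tau_{ik} = 0\) for all \(k\). In particular, the vector field \(\partial_{z_i}\) satisfies \(\partial_i \tau_{ii} = 0\), so the \(i\)-th row of \(\mathbf{T}\) is constant in the \(z_i\) direction.

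The second step turns this into a statement about the Hodge structure. The row \(\tau_{i\bullet}\) records the periods of a fixed flat class \(\gamma_i\) paired against a varying frame. The idea is to compare \(\partial_{z_i}\), which maps to \(F^1\cV\) under \(\iota_\sigma\), with the flat class \(e_i\). The vanishing \(\partial_i\tau_{i k} = 0\) should say that the Higgs field \(\theta_{\partial_{z_i}}\) kills the class dual to \(e_i\). Equivalently, the quadratic form \(c(\partial_{z_i}, \partial_{z_i}, -)\) vanishes, so \(\partial_{z_i}\) is an isotropic direction of the cubic. I would then propagate this by analytic continuation. Constancy of \(\tau_{ii}\) on \(U\) persists on the universal cover of \(B^\circ\), so monodromy transforms of the condition also hold. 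Using the large monodromy (algebraic monodromy \(\Sp_{2h}^k\) after the finite cover \(B' \to B\)), I would aim to show that the resulting Zariski-closed condition on the period image is invariant under a normal subgroup of the monodromy. That would cut the image down to a proper subvariety of dimension less than \(g\), contradicting maximal variation.

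The third step is a cleaner variant, and the one I would try first. Constancy of \(\tau_{ii}\) means the section \(\gamma_i\) has constant "self-period" \(\psi(\gamma_i, \text{Im}^{-1}\ldots)\). More concretely, the function \(b \mapsto \langle \gamma_i, \omega_i(b)\rangle\) is constant, where \(\omega_i = \iota_\sigma(\partial_{z_i})\). Monodromy around the infinite-order component \(D_1\) from \Cref{infinite-monodromy} acts as \(T = \exp(N)\) with \(N\) of rank one by \Cref{local-monodromy-eigenvalues}. The nilpotent orbit theorem gives \(\mathbf{T} \sim \frac{\log t}{2\pi i} N + \text{holomorphic}\) near \(D_1\). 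After a symplectic change of frame, which is allowed because monodromy is large and moves \(\gamma_i\) to a Zariski-dense set of classes, we can arrange \(N_{ii} \neq 0\). Then \(\tau_{ii}\) has a logarithmic singularity along \(D_1\), so it cannot be constant on any open set. This holds by the identity theorem, since constancy on \(U\) propagates along paths to a neighborhood of \(D_1\), up to monodromy transforms of the frame.

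The main obstacle is making this frame change legitimate. The hypothesis concerns one specific coordinate frame on \(U\), and continuing to \(D_1\) changes the frame by a monodromy element \(\gamma\). So we need the \((i,i)\) entry of \(\gamma^{-1} N \gamma\) to be nonzero for the \(\gamma\) that actually occurs, or else an argument that the vanishing locus of these entries cannot contain the whole monodromy orbit. For the latter, I would use Zariski density of monodromy in \(\Sp_{2h}^k\), via Andr\'e's theorem, together with the splitting from \Cref{mt-classification}. The condition \((\gamma^{-1}N\gamma)_{ii} = 0\) for all \(\gamma\) is algebraic in \(\gamma\). Since \(N\) is a nonzero rank-one nilpotent in one factor, it defines a proper closed subset unless \(e_i\) lies in the complementary factors. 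That case must be handled by choosing a degeneration adapted to the factor containing \(e_i\), which exists by transitivity of \(\Gamma\).
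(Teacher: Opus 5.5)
\paragraph{Where the argument breaks.} Your Step 1 agrees with the paper. The argument you would try first (Step 3) breaks exactly at the frame change you flag, and the proposed repair tests the wrong condition.

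The hypothesis only transfers to frames reached by analytic continuation. These are the translates \(\gamma F\) of the continued frame \(F\), with \(\gamma\) in the monodromy group, or, by Zariski density, in the real points of \(\mathrm{Mon}(V_\Q)\).

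The expansion \(\mathbf{T}=\frac{\log t}{2\pi i}S+(\text{holomorphic})\), in which \(S_{ii}\neq 0\) gives \(\tau_{ii}\) a logarithmic term, holds only in a frame adapted to the degeneration. Adapted means the Lagrangian \(\langle a_1,\dots,a_g\rangle\) contains the vanishing cycle \(v\) spanning \(\operatorname{im}N\), so that \(T\) acts on \(\bbH_g\) by a translation. The translates \(\gamma F\) are in general not adapted.

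In an arbitrary frame one has \(\mathbf{T}=\exp(\frac{\log t}{2\pi i}N)\cdot h(t)\). This is linear-fractional in \(\log t\), and \(\partial\tau_{ii}/\partial\log t\) is a nonzero multiple of \(\psi(\omega_i,v)^2\), where \(\omega_i\) is the \(i\)-th normalized holomorphic form. That quantity depends on the varying Hodge filtration. It reduces to the matrix entry of \(N\) only when \(\psi(a_j,v)=0\) for all \(j\), i.e.\ in an adapted frame. So proving \((\gamma^{-1}N\gamma)_{ii}\neq 0\) for some \(\gamma\) produces no singularity of \(\tau_{ii}\).

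Nor can you always reach an adapted translate. The orbit of \(v\) under \(\mathrm{Mon}(V_\Q)(\R)\) stays inside \(\bigcup_j U_j\). A rational Lagrangian may meet every \(U_j\) trivially, e.g.\ \(\langle e_1+e_2,\,f_1-f_2\rangle\subset U_1\oplus U_2\) for symplectic bases \((e_j,f_j)\) of \(U_j\).

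Step 2 stops at ``I would aim to show.'' Its stated mechanism, a condition invariant under a normal subgroup that cuts the image below dimension \(g\), is not justified and is not what the propagation actually yields.

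\paragraph{How Step 2 can be completed.} The propagation idea of Step 2 can be finished with no degeneration at all.
\begin{enumerate}
\item Since \(\tau_{ii}\circ\wt\cP\) is constant on the universal cover of \(B^\circ\) and \(\wt\cP\) is equivariant, \((\gamma\cdot\wt\cP)_{ii}\equiv c\) for every monodromy element \(\gamma\).
\item The map \(\gamma\mapsto(\gamma\cdot\tau)_{ii}\) is a rational function of \(\gamma\) with no poles on \(\Sp_{2g}(\R)\). Zariski density therefore extends the identity to all real points of \(\mathrm{Mon}(V_\Q)\).
\item By Andr\'e, \(\mathrm{Mon}(V_\Q)^\circ\) is a product of some of the factors \(\Sp(U_j)\). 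A missing factor would make that \(U_j\) isotrivial, and by transitivity every \(U_j\), contradicting maximal variation. So all factors occur.
\item Hence \(\tau_{ii}\) would be constant on the whole Mumford--Tate domain \(\simeq\bbH_h^k\) through a very general period point \(\tau_0\).
\item But up to sign \(d\tau_{ii}(\phi)=\psi(\phi(\omega_i),\omega_i)\), and the tangent space of \(\bbH_h^k\) at \(\tau_0\) contains every \(\phi=\oplus_j\phi_j\) with \(\phi_j\) an arbitrary symmetric form on \(F^1U_j\). Write \(\omega_i=\sum_j\omega_i^{(j)}\) and pick \(j\) with \(\omega_i^{(j)}\neq 0\) and \(\phi_j(\omega_i^{(j)},\omega_i^{(j)})\neq 0\). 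Then \(d\tau_{ii}\neq 0\) on that domain, a contradiction.
\end{enumerate}
This would be a genuinely different proof from the paper's. The paper instead feeds the Donagi--Markman vanishing into Bakker--Tsimerman's Ax--Schanuel, via the atypical intersection with \(\{E\}\times\bbH_{g-1}\) along \(C_p\), and then invokes the classification. As written, however, your proposal does not contain this argument.
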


\begin{proof}
Without loss of generality, take \(i = 1\). By directly applying 
\eqref{eq-donagi-markman}, the condition that \(\tau_{11}\) is constant in all 
directions implies that
\begin{equation*}
    \frac{\partial \tau_{11}}{\partial z_1} = \frac{\partial \tau_{12}}{\partial z_1} = 
    \cdots = \frac{\partial \tau_{1k}}{\partial z_1} = 0.
\end{equation*}

Let \(V\) be the \(\Z\)-VHS underlying \(f\) on the locus of smooth fibers \(B^\circ\).
Choose a very general point \(p \in B^\circ\), and let \(C_p\) be a 
(locally defined) complex curve given by integrating \(\partial/\partial z_1\) 
through \(p\). We get a real Hodge substructure
\begin{equation}
    E = \begin{pmatrix}
        \tau_{11} \\ \tau_{21}\\ \vdots \\ \tau_{g1} \\ \\ | \\ e_1 \\ |
    \end{pmatrix} \oplus
    \begin{pmatrix}
        \overline{\tau}_{11} \\ \overline{\tau}_{21} \\ \vdots \\ \overline{\tau}_{g1} \\
        \\ | \\ e_1 \\ |
    \end{pmatrix} \subseteq V_{\R,p},
\end{equation}
which is preserved by moving along \(C_p\). In other words, \(E\) is contained in 
the fixed part of \(V_\R|_{C_p}\).

We can interpret \(C_p\) more moduli-theoretically. Recall that we have 
\(\cP: B^\circ \to \cA_g\); let \(\wt{\cP}: U \to \bbH_g\) be a local lift of \(\cP\) to 
the universal cover of \(\cA_{g, \eta}\). Then \(\wt{\cP}(U)\) meets the locus 
\(\{E\} \times \bbH_{g-1}\) along the curve \(\wt{\cP}(C_p)\). Observe that this is 
an atypical intersection: the image of \(\wt{\cP}\) is \(g\)-dimensional,
while the codimension of \(\{E\} \times \bbH_{g-1}\) is also
\(\binom{g+1}{2} - \binom{g}{2} = g\), so the expected dimension of the intersection 
is \(g - g = 0\). As a result,
we can apply the functional transcendence result of Bakker--Tsimerman 
\cite[Theorem 1.3]{bakker-tsimerman25}: because \(\wt{\cP}(U) \cap (\bbH_{g-1} \times \{E\})\) 
is atypical, \(\cP(C_p)\) is contained in a weakly special subvariety of \(\cA_g\).

However, recall that \(p\) was a very general point. Hence there is a 
weakly special subvariety \(Z \subsetneq \cA_{g, \eta}\) containing \(\cP(B^\circ)\). 
Such a subvariety by definition arises from a normal subgroup \(H\) of the 
Mumford--Tate group \(G = \mathbf{MT}(Z)\) of the very general point of \(Z\).
Now recall \Cref{mt-classification}; we want to rule out the case where \(G\) is
isogenous to \(\mathbb{G}_m \cdot (\Sp_{2h})^{g/h}\), for some \(h\) dividing \(g\). 

If \(h = g\), then we are immediately done, since \(G = \mathrm{GSp}_{2g}\) has 
only \(H = \mathbb{G}_m\) as a non-trivial normal subgroup, forcing \(\cP(C_p)\) to be a 
single point. This implies that \(f\) does not vary maximally, so it is 
isotrivial. 

On the other hand, if \(G\) is isogenous to \(\mathbb{G}_m \cdot (\Sp_{2h})^{g/h}\), 
the Mumford--Tate representation splits over \(Q\) as 
\(V_\Q = U_1 \oplus \cdots \oplus U_k\). Suppose that \(\mathbf{T}\) has a 
constant diagonal entry, say \(\mathbf{\tau}_{11}\). At least one of the \(U_i\) 
is such that \(e_1\) has a non-zero projection onto \(U_i\). In particular, 
this forces the period matrix \(\mathbf{T}_i\) of \(U_i\) to have a constant diagonal 
entry. But unlike \(V_\Q\), \(U_i\) is Hodge-generic; thus by the argument above, 
it must be isotrivial. This ensures that \(V_\Q\) does not vary maximally, 
so it too is isotrivial. 
\end{proof}

Notice that the proof of \Cref{constant-diagonal-entry} argues something else: 
the period matrix of \(V\) admits a constant entry if and only if the Mumford--Tate group 
of the very general fiber fails to be \(\mathrm{GSp}_{2g}\). 
In terms of \(\omega_{SK}\), we get the following consequence.

\begin{corollary} \label{omega_SK}
    Let \(X\) be primitive symplectic, let \(f: X \to B\) be a Lagrangian fibration, 
    and suppose that \(f\) is not isotrivial. Then \(\omega_\mathrm{SK}\) has positive 
    holomorphic sectional curvature. If we suppose further that 
    \(\mathbf{MT}(X_b) = \mathrm{GSp}_{2g}\), then \(\omega_\mathrm{SK}\) has 
    positive bisectional curvature. 
\end{corollary}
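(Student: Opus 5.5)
The plan is to reduce both claims to statements about the period matrix \(\mathbf{T}\), using \cite[Remark 2.3]{li-tosatti24} as the dictionary between curvature of \(\omega_\mathrm{SK}\) and \(\mathbf{T}\), and then apply \Cref{constant-diagonal-entry} and the observation following its proof.

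\emph{Holomorphic sectional curvature.} By \cite[Remark 2.3]{li-tosatti24}, at a point \(b \in B^\circ\) and a tangent direction \(v\), the holomorphic sectional curvature of \(\omega_\mathrm{SK}\) in direction \(v\) is, up to a positive factor, \(\bigl|\partial_v \tau_{vv}\bigr|^2\) in flat special coordinates adapted to \(v\). So it vanishes exactly when the corresponding diagonal entry has vanishing derivative. I first change special coordinates by a real symplectic (constant) transformation so that \(v = \partial/\partial z_1\); this keeps the Donagi--Markman symmetry \eqref{eq-donagi-markman}, and the relevant quantity becomes \(\partial \tau_{11}/\partial z_1\). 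Nonnegativity is formal from the formula. For strict positivity, suppose \(\partial_{z_1}\tau_{11}\) vanishes at some point. To apply \Cref{constant-diagonal-entry}, I need \(\tau_{11}\) constant on an open set, not merely a pointwise vanishing. The main obstacle is to pass from pointwise vanishing to local constancy.

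\emph{Pointwise to open-set vanishing.} I would first try to argue that the proof of \Cref{constant-diagonal-entry} in fact only uses the vanishing of \(\partial_{z_1}\tau_{1j}\) along the integral curve \(C_p\) through a \emph{very general} point. The set where some direction has zero holomorphic sectional curvature is a real-analytic subset of \(\bbP(\cT_{B^\circ})\). If its projection to \(B^\circ\) contains very general points, then the atypical-intersection argument via \cite[Theorem 1.3]{bakker-tsimerman25} gives a weakly special subvariety containing \(\cP(B^\circ)\). That forces isotriviality exactly as in the proof of \Cref{constant-diagonal-entry}, a contradiction. If instead the zero locus projects into a proper analytic subset, I would try to exclude this by semicontinuity and homogeneity of the period domain. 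If that fails, I would settle for the statement \emph{almost everywhere}, which is the form in which Li--Tosatti's expectation is phrased. This is the step I would be least sure of.

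\emph{Bisectional curvature.} Under \(\mathbf{MT}(X_b) = \mathrm{GSp}_{2g}\), the bisectional curvature in directions \(v, w\) is, by the same remark, governed by the mixed derivatives \(\partial_v \tau_{vw}\), equivalently \(\partial_w\tau_{vv}\) by \eqref{eq-donagi-markman}. Its vanishing corresponds to some entry of \(\mathbf{T}\) being constant in suitable coordinates. By the observation after \Cref{constant-diagonal-entry}, a constant entry of the period matrix occurs iff the very general fiber is not Hodge-generic. Rerunning the same Bakker--Tsimerman atypicality argument with \(\{E\}\times\bbH_{g-1}\) replaced by the locus where the relevant entry is fixed gives a weakly special subvariety containing \(\cP(B^\circ)\). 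This contradicts \(G = \mathrm{GSp}_{2g}\), since its only proper normal subgroup is \(\mathbb{G}_m\) and \(f\) is not isotrivial.
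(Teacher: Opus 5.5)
Your reduction is the same as the paper's. Its entire argument is to read \cite[Remark 2.3]{li-tosatti24} as a dictionary between curvature positivity of \(\omega_\mathrm{SK}\) and non-constancy of entries of \(\mathbf{T}\), and then to quote \Cref{constant-diagonal-entry} and the observation after its proof; it does not address the issue you raise.

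The step you flag is a genuine gap, and it cannot be closed, because pointwise positivity on \(B^\circ\) is false. In special coordinates \(g_{i\bar j}\) is a multiple of \(\mathrm{Im}\,\tau_{ij}\) with \(\tau_{ij}\) holomorphic. So \(R(v,\bar v,w,\bar w)\) is a positive multiple of \(\lvert c(v,w,\cdot)\rvert^2_{g^{-1}}\), where \(c_{ijk}=\partial_i\tau_{jk}\) is totally symmetric. Hence the holomorphic sectional curvature in the direction \(\partial/\partial z_1\) vanishes at \(b\) iff \(\partial_k\tau_{11}(b)=0\) for all \(k\), not only \(k=1\) as in your formula \(\lvert\partial_v\tau_{vv}\rvert^2\).

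For \(g=1\) this curvature is a positive multiple of \(\lvert\tau'(z)\rvert^2/(\mathrm{Im}\,\tau)^3\), so it vanishes wherever the period map ramifies. For a generic Weierstrass elliptic K3 surface, \(j\colon\bbP^1\to\bbP^1\) has degree \(24\), hence \(46\) ramification points by Riemann--Hurwitz. Only \(0+16+12\) of these lie over \(\infty,0,1728\). At the remaining \(18\) points the fiber is smooth and \(j'(\tau)\neq 0\), so \(\tau'=0\) and the curvature vanishes, although \(f\) is not isotrivial.

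Your proposed repair also fails for a concrete reason. \cite[Theorem 1.3]{bakker-tsimerman25} needs a positive-dimensional atypical intersection: the column \((\tau_{\cdot 1};e_1)\) must stay fixed along the curve \(C_p\), which uses \(\partial_1\tau_{1k}\equiv 0\) along \(C_p\). Vanishing at a single point, very general or not, gives only an isolated point of intersection with \(\{E\}\times\bbH_{g-1}\), which is not atypical. No semicontinuity or homogeneity argument can exclude such isolated zeros, since they do occur.

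What the argument actually proves is exactly the content of \Cref{constant-diagonal-entry}. For non-isotrivial \(f\), the holomorphic sectional curvature along a coordinate field \(\partial/\partial z_1\) of some system of special coordinates cannot vanish identically on an open set. Even your ``almost everywhere'' fallback does not follow from this. The condition \(c_b(v,v,\cdot)=0\) says \([v]\) is a singular point of the cubic \(\{c_b=0\}\subseteq\bbP(T_bB)\). A field of such directions need not be \(\partial/\partial z_1\) for any special coordinates. In that case no real Hodge substructure is evidently fixed along its integral curves, and neither the proposition nor the atypicality argument applies.

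The bisectional part fails more drastically. The condition \(c_b(v,w,\cdot)=0\) is a system of \(g\) equations of bidegree \((1,1)\) on \(\bbP^{g-1}\times\bbP^{g-1}\), which has dimension \(2g-2\geq g\) once \(g\geq 2\), so it always has solutions. For example, for \(g=2\) and \(c_b=x^3+y^3\) in suitable coordinates, \(c_b(e_1,e_2,\cdot)=0\). Thus for \(g\geq 2\) the bisectional curvature vanishes somewhere at every point of \(B^\circ\), whatever \(\mathbf{MT}(X_b)\) is. Also, the relevant quantity is the whole covector \(c(v,w,\cdot)\), i.e.\ \(d\tau_{vw}\), not the single component \(\partial_w\tau_{vv}\).

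The correct content of your rerun is: if \(\mathbf{MT}(X_b)=\mathrm{GSp}_{2g}\), then no entry of \(\mathbf{T}\) is constant on an open set. That part works as you suggest. If \(\tau_{12}\) is constant, then \(\partial_1\tau_{2k}=\partial_k\tau_{12}=0\) for all \(k\). So the column \((\tau_{\cdot 2};e_2)\) is fixed along integral curves of \(\partial/\partial z_1\), and the proof of \Cref{constant-diagonal-entry} goes through with \(\{E_2\}\times\bbH_{g-1}\).
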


Lagrangian fibrations failing to induce \(\omega_{\mathrm{SK}}\) with positive 
bisectional curvature are 
readily realized, say by \Cref{example-K3n}. But in the next section, we will prove in
\Cref{general-mt-classification} that this can be an uncommon occurance: if \(X\) is a 
hyper-K\"ahler manifold with \(b_2(X) \geq 7\) such that \(f: X \to B\) is general 
among Lagrangian fibrations in that deformation class, then 
\(\mathbf{MT}(X_b) = \mathrm{GSp}_{2g}\).

\section{The Mumford--Tate group of a general deformation}

Recall that we had \(\pi: B' \to B\) such that \(\pi^* V_\C = U_1 \oplus \cdots \oplus U_k\),
a splitting we now know to occur over \(\Q\). Let 
\begin{equation*}
    1 \to \Gamma' \to \Gamma \to \Lambda \to 1
\end{equation*}
be the corresponding map of monodromy groups, where \(\Gamma\) is the monodromy image 
of \(V\), \(\Gamma'\) is the monodromy image of \(\pi^* V\), and \(\Lambda\) is the 
covering automorphisms of \(\pi\). 

Recall again that \(V_\C\) is irreducible as a \(\C\)-local system, by \cite{varvak25}. 
Since \(\pi^* V\) decomposes as above, the group \(\Lambda\) must transitively 
permute the irreducible factors \(U_i\). Since \(\Lambda\) is a quotient of \(\Gamma\), 
it is generated by local monodromy operators coming from some components 
\(D_i \subseteq D\).

\begin{lemma} \label{two-cycle-monodromy}
    There exists a component of the discriminant 
    \(D_1 \subseteq D\) whose associated local monodromy operator \(\lambda\) 
    acts by a two-cycle on the irreducible factors: for some \(i, j\) and all 
    \(\ell \neq i, j\),
    \[\lambda: U_i \to U_j,\ U_j \to U_i,\ U_\ell \xrightarrow{\sim} U_\ell, \quad 
    \lambda^2 = \id.\]
\end{lemma}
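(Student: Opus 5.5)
We combine transitivity of \(\Lambda\) with the local constraint of \Cref{local-monodromy-eigenvalues}. Since \(B^\circ\) is the complement of the discriminant in the normal variety \(B\), \(\pi_1(B^\circ)\) is generated, modulo \(\pi_1(B)\), by loops around components of \(D\). Our first step is therefore to show that \(\Lambda\) is generated by images of local monodromy operators around codimension-one components \(D_i\), possibly after replacing \(B\) by a suitable cover. Concretely, we will argue that the cover \(\pi: B' \to B\) can be chosen so that its branch locus lies in \(D\), and that no nontrivial element of \(\Lambda\) comes from \(\pi_1(B)\). The point is that an unramified cover of \(B\) would pull back to a cover of \(X\); this should be excluded using \(H^1(X,\mathcal{O}_X)=0\) together with the fact that \(\pi_1\) of the base of a Lagrangian fibration of a primitive symplectic variety is controlled. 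If this is delicate, we will simply quote that \(\Lambda\) is a quotient of \(\Gamma\) generated by local monodromies, as the paper asserts just before the lemma. Since \(\Lambda\) acts transitively on \(\{U_1,\dots,U_k\}\) and \(k\ge 2\) (otherwise there is nothing to prove), some generator \(\lambda\), the local monodromy around a component \(D_1\), must move at least one \(U_i\).

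The key step is to show that any local monodromy operator \(T\) which permutes the factors acts by a single two-cycle of order two. By \Cref{local-monodromy-eigenvalues}, \(T\) has at most two nontrivial eigenvalues \(\zeta,\bar\zeta\). Now suppose \(T\) permutes the \(U_\ell\) according to a permutation \(\sigma\) with a cycle \((i_1\,i_2\,\cdots\,i_m)\), \(m\ge 2\). Then \(T\) preserves \(W = U_{i_1}\oplus\cdots\oplus U_{i_m}\), and \(T^m\) preserves each \(U_{i_j}\). On \(W\), the characteristic polynomial of \(T\) is \(\det(x^m - T^m|_{U_{i_1}})\), because \(T\) acts as a block cyclic matrix. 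Its roots are the \(m\)-th roots of the eigenvalues of \(T^m|_{U_{i_1}}\), and each such eigenvalue contributes all \(m\) of its \(m\)-th roots. Since \(\dim U_{i_1} = 2h \ge 2\), we get at least \(2h(m-1) \ge 2(m-1)\) eigenvalues different from \(1\), counted with multiplicity. The same count applied to every nontrivial cycle of \(\sigma\) must total at most \(2\). This forces \(\sigma\) to consist of exactly one transposition and to satisfy \(h=1\) in that count. To avoid the \(h=1\) restriction, we should count more carefully: the eigenvalues of \(T|_W\) are \(\pm\sqrt{\mu}\), where \(\mu\) ranges over eigenvalues of \(T^2|_{U_i}\). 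Each \(\mu\) then produces the eigenvalue pair \(\{\sqrt\mu,-\sqrt\mu\}\), and at most one member of each pair equals \(1\). Hence at least \(2h\) eigenvalues differ from \(1\), so \(h = 1\) exactly when this case occurs. If instead the cycle length were \(m \ge 3\), we would get at least \(2h(m-1) \ge 4\) nontrivial eigenvalues, a contradiction. So \(\sigma\) is a transposition \((i\,j)\), and \(T\) fixes every other \(U_\ell\) setwise, acting on each by an operator with all eigenvalues equal to \(1\).

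It remains to show that \(\lambda^2 = \id\). The eigenvalues of \(T|_{U_i\oplus U_j}\) consist of the two nontrivial ones together with \(1\)'s. By the pairing above, \(T^2|_{U_i}\) has all eigenvalues equal to \(1\), and the two nontrivial eigenvalues are \(-1,-1\), so \(\zeta\) is a primitive second root of unity. We expect the main obstacle to be excluding a nontrivial unipotent part of \(T\), on the blocks \(U_\ell\) or on \(U_i\oplus U_j\). Here we will use the proof of \Cref{local-monodromy-eigenvalues}, where \(N\) has the form there with the semisimple and nilpotent parts supported on the single vector \(\wt v_1\) and its partner. A nonzero \(N_u\) would, by the Galois and symmetry argument already used in the proof of \Cref{mt-classification}, also appear on \(U_j = T(U_i)\), which would give nilpotent rank at least \(2\). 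This is impossible, so \(N_u=0\), and \(T\) is semisimple with eigenvalues \(\pm1\). Thus \(\lambda := T\) satisfies \(\lambda^2=\id\), which is the statement of \Cref{two-cycle-monodromy}. Since we are really working in \(\Lambda\), only the induced permutation and the identity \(\lambda^2=\id\) on factors are needed, so the semisimplicity argument is required only for the stronger operator-level statement.
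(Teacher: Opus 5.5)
Granting, as the paper does just before the lemma, that \(\Lambda\) is generated by local monodromies around divisorial components of \(D\), your argument is correct, but it runs differently from the paper's.

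\textbf{The paper's route versus yours.} The paper works in the limit Hodge filtration at a general point of \(D_1\). It takes vectors \(v_{i_1},\dots,v_{i_c}\in F^1\ol{\cV}|_{D_1}\) permuted cyclically by \(\lambda\) and forms \(\sum_j\zeta^{j}v_{i_j}\) and \(\sum_j\ol\zeta^{\,j}v_{i_j}\). It then uses that the limit \(F^1\) carries at most one nontrivial eigenvalue to force \(\zeta=\ol\zeta=-1\). You instead count eigenvalues of the block-cyclic operator on the whole fiber via \(\det(x^m-T^m|_{U_{i_1}})\). This is more elementary: it never needs the splitting to be compatible with the limit filtration. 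It also tracks multiplicities, using the multiplicity form of \Cref{local-monodromy-eigenvalues} that its proof supplies (only \(\alpha_1\) can be nonzero).

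\textbf{What your route buys.} First, your remark about ``avoiding the \(h=1\) restriction'' is confused. The refined count re-proves \(h=1\), and that is a genuine consequence, not a defect to be avoided. The paper's proof takes one vector per factor and so ignores that \(F^1U_i\) is \(h\)-dimensional; run with multiplicities, it forces \(h=1\) too. Second, you exclude a unipotent part. The relevant symmetry is conjugation by \(T\) itself, not the Galois argument: \(T^2|_{U_i\oplus U_j}=\mathrm{diag}(BA,AB)\) with \(AB\sim BA\), so a Jordan block would occur twice. This gives an operator of order exactly two. The paper never addresses this point, yet \S5 needs it to get type \(\mathrm{I}_0^*\) rather than \(\mathrm{I}_n^*\).

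\textbf{Your first step.} Do not try to prove it as you sketched. For normal \(B\), \(\ker(\pi_1(B^\circ)\to\pi_1(B))\) need not be normally generated by meridians of divisorial components. Loops around codimension \(\geq 2\) singular points of \(B\) survive, and \(H^1(X,\mathcal O_X)=0\) says nothing about them. Your own \(h=1\) conclusion shows this cannot be repaired for general primitive symplectic \(X\). In \Cref{attainability} with \(h,k\geq 2\), the factors of \(Y^{(k)}\to S^{(k)}\) are permuted through \(S^k\to S^{(k)}\). That cover is branched only along the big diagonal, which has codimension \(h\). Meanwhile every divisorial local monodromy acts on a single factor, even though \(\pi_1(S^{(k)})\) can be trivial. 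So the generation statement is an external input, available for instance when \(B\) is smooth and simply connected. Both your proof and the paper's rest on it, and your fallback of quoting it is the right move.
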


\begin{proof}
First notice that there is a natural map \(\Lambda \to \mathfrak{S}_k\), 
coming from the permutations of the factors \(U_1, \dots, U_k\) underlying
the action of \(\Lambda\). In fact, this proof will show that this map is surjective,
but it suffices that it surjects onto a subgroup \(\overline{\Lambda} \leq \mathfrak{S}_k\). 
Since \(\Lambda\) is the group of covering automorphisms of \(\pi\), there is a 
corresponding intermediate cover
\begin{equation*}
    B' \xrightarrow{\pi'} B'' \xrightarrow{p} B
\end{equation*}
where \(\ol{\Lambda} = \Aut(p)\). Take \(D_1\) to be a component whose local 
monodromy operator acts by an element of \(\ol\Lambda\); then \(\lambda\) 
acts on any \(U_{i}\) by a \(c_i\)-cycle; that is,
\begin{equation} \label{eq-permutation-of-factors}
    \lambda: U_{i} \to U_{i_2} \to \cdots \to U_{i_{c_i}} \to U_{i}, \quad 
    \lambda^{c_i}: U_i \xrightarrow{\sim} U_i.
\end{equation}

Now notice that \(\ol\Lambda\) acts on the limit mixed Hodge structure over a general 
point of \(D_1\). In other words, for \(b \in D_1\) a general point, 
we can find a local frame
\(v_{i_1}, \dots, v_{i_{c_i}} \in (F^1 \ol{\cV}|_{D_1})\) for which
\begin{equation*}
    \lambda: v_{i_1} \mapsto v_{i_2} \mapsto \cdots \mapsto v_{i_{c_i}} \mapsto v_{i_1}.
\end{equation*}

Fix \(\zeta\) a primitive \(c_i\)-th root of unity, and consider the action of \(\lambda\) 
on \(v_{i_1} + \zeta v_{i_2} + \cdots + \zeta^{c_i-1} v_{i_{c_i}}\); this is an eigenvector
in \(F^1 \ol\cV|_{D_1}\) with eigenvalue \(\ol\zeta\). On the other hand, 
\(v_{i_1} + \ol\zeta v_{i_2} + \cdots + \ol\zeta^{c_i-1} v_{i_{c_i}}\) is another eigenvector 
living in \(F^1 \ol\cV|_{D_1}\), with eigenvalue \(\zeta\). By \Cref{local-monodromy-eigenvalues},
we cannot have two distinct eigenvalues in the \(F^1\)-piece of the limit MHS, 
so we must have \(\zeta = \ol\zeta = -1\). In particular, \(c_i = 2\). 

Finally, the argument that \(\lambda\) fixes all but two of the \(U_i\) is analogous: 
if not, by the construction above, we could produce two distinct monodromy eigenvectors 
in the \(F^1\) piece of the limit MHS. This would once again contradict 
\Cref{local-monodromy-eigenvalues}.
\end{proof}

We have shown that if the very general fiber of \(f: X \to B\) is not Hodge-generic, 
then it has general singular fibers whose associated local monodromy group is \(\Z/2\Z\).
Recall from the classification of singular fibers of Hwang--Oguiso \cite{hwang-oguiso09} 
that such a fiber is of Kodaira type \(\mathrm{I}_0^*\). Now recall the following result 
of Lehn:
\begin{proposition}[\cite{lehn2016}, Theorem 5.7] \label{lehn-general-singular-fibers}
    Let \(X\) be a hyper-K\"ahler manifold and \(f: X \to B\) a Lagrangian fibration.
    Take \(X'\) a general deformation of \(X\) preserving the fibration, \(f': X' \to B'\). 
    All general singular fibers of \(f'\) are of Kodaira type \(\mathrm{I}_n\), 
    \(\mathrm{II}\), \(\mathrm{III}\), or \(\mathrm{IV}\).
\end{proposition}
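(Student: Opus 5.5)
The statement is Lehn's theorem, quoted from \cite{lehn2016}, so I only sketch how I would prove it in our setting. The idea is to reduce the global problem to a local problem about elliptic surfaces transverse to the discriminant, and then use the smoothness of Matsushita's deformation space of pairs \((X, L)\), where \(L = f^*\mathcal{O}_B(1)\), to realize the needed local smoothings.

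First I would analyze the germ of \(f\) over a general point \(b \in D_1\) of a codimension one component of the discriminant. By \Cref{local-monodromy-eigenvalues} and the classification of Hwang--Oguiso \cite{hwang-oguiso09}, the following holds after shrinking to a polydisk \(U\) and passing to a finite \'etale cover. The fibration is a family of one-dimensional degenerations times a family of \((g-1)\)-dimensional abelian varieties. The transverse slice \(C \simeq \Delta\) through \(b\) then gives an elliptic surface \(f^{-1}(C) \to C\), and the Kodaira type of its central fiber is the type of the general singular fiber over \(D_1\). The excluded types are \(\mathrm{I}_n^*\), \(\mathrm{II}^*\), \(\mathrm{III}^*\), \(\mathrm{IV}^*\). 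These are exactly those whose monodromy has a nontrivial semisimple part not realized by a ``minimal'' vanishing cycle, i.e.\ with \(T_s\) of eigenvalue \(-1\) (or the order \(3,4,6\) analogues paired with a starred type). In particular they are precisely the types which, in a one-parameter family of elliptic surfaces, split into several simpler fibers of types \(\mathrm{I}_n, \mathrm{II}, \mathrm{III}, \mathrm{IV}\). For example, \(\mathrm{I}_0^*\) splits into six \(\mathrm{I}_1\) fibers.

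Second, I would show that the locus in Matsushita's deformation space \(\mathrm{Def}(X, L)\) \cite{matsushita16} where the general fiber over some component of the discriminant is of starred type is a proper closed analytic subset. This deformation space is smooth of dimension \(h^{1,1}(X) - 1\). The closedness would follow by upper semicontinuity of the local monodromy data along \(D\). Concretely, the condition that \(T_s \neq 1\) on the two-dimensional vanishing part is preserved under specialization, and the sublocus where a component of \(D\) survives with this monodromy is cut out analytically.

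Third, the key step is properness: I must produce a first-order deformation of \((X, L)\) which does not preserve the starred fiber. I would compare the tangent space \(H^1(X, T_X)_L \simeq H^1(X, \Omega^1_X)_{L^\perp}\) (via \(\sigma\)) with the local deformation space of the germ \(f^{-1}(U) \to U\). Via the transverse slice, the latter contains the versal deformation directions of the starred Kodaira fiber as an elliptic surface. I then need to show that the restriction map from global to local deformations hits a direction that smooths the starred fiber into unstarred ones.

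This local-to-global surjectivity is the main obstacle. I would first attempt it through a Hodge-theoretic reformulation. A starred general fiber over \(D_1\) forces the local monodromy to contain \(-1\) on a rank-two piece, which, after the cover, splits off part of the limit mixed Hodge structure. Globally, this should correspond to a nontrivial Hodge-theoretic condition on \(H^2(X)\), namely a class in \(H^{1,1}\) orthogonal to \(L\) arising from the components of the starred fiber that are not met by the zero-section analogue. A general deformation in \(\mathrm{Def}(X, L)\) destroys any such extra Hodge class, since the Noether--Lefschetz loci there are proper. That would rule out the starred types for the general deformation and leave only \(\mathrm{I}_n\), \(\mathrm{II}\), \(\mathrm{III}\), \(\mathrm{IV}\).
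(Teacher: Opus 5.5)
The paper gives no proof of this statement. It is quoted from \cite{lehn2016}, with only the remark after \Cref{general-mt-classification} that Lehn's argument applies to a general deformation, not merely to some deformation. Relying on the citation is therefore what the paper does. The sketch you add, however, does not prove the statement, and it rests on the wrong invariant. Local monodromy does not separate starred from unstarred types:
\begin{itemize}
\item the monodromies of \(\mathrm{II}^*,\mathrm{III}^*,\mathrm{IV}^*\) are conjugate to the inverses in \(\SL_2(\Z)\) of those of \(\mathrm{II},\mathrm{III},\mathrm{IV}\), so they have the same eigenvalues;
\item \(-1\) lies in the monodromy group of the allowed types \(\mathrm{II}\) and \(\mathrm{III}\), but not in that of \(\mathrm{IV}^*\), which has order \(3\);
\item the condition \(T_s\neq 1\) from your Step 2 also holds for \(\mathrm{II},\mathrm{III},\mathrm{IV}\), while \(T_s=-1\) picks out only \(\mathrm{I}_n^*\).
\end{itemize}
So no argument of the form ``starred forces \(-1\), which splits off part of the limit mixed Hodge structure'' can work. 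Your parenthetical about ``order \(3,4,6\) analogues paired with a starred type'' is circular. Splitting into simpler fibres does not characterize the starred types either, since \(\mathrm{I}_n\) (\(n\geq 2\)), \(\mathrm{III}\) and \(\mathrm{IV}\) also split. What actually distinguishes \(\mathrm{I}_n,\mathrm{II},\mathrm{III},\mathrm{IV}\) is that every component has multiplicity one. Each starred type has components of multiplicity \(1\) and of multiplicity \(\geq 2\). Also, the type is defined through the Hwang--Oguiso characteristic cycle, not through \(f^{-1}(C)\) for a transverse disk \(C\): that space is \((g+1)\)-dimensional, and the local product decomposition you invoke is not available in this generality.

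The decisive step is that a starred general singular fibre yields a class in \(\mathrm{NS}(X)_\Q\) not proportional to \(L\). You only assert this (``should correspond''), and a general Lagrangian fibration has no zero-section to build such a class from. One way to supply it also makes your Steps 2 and 3 unnecessary; the semicontinuity claimed in Step 2 is in any case unjustified, since discriminant components can coalesce under specialization.
\begin{itemize}
\item At a very general point of \(\mathrm{Def}(X,L)\), the period is very general in the hyperplane \(L^\perp\) of the period domain, so \(\mathrm{NS}(X)_\Q=\Q L\).
\item Hence every irreducible component \(E\) of \(f^{-1}(D)\) satisfies \(E\cdot C=0\) for every curve \(C\) in a fibre.
\item Near a general fibre \(X_b\), \(b\in D\), \(E\) is a sum, each term with coefficient one, of some of the local components \(E_i\) of \(f^{-1}(D)\). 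Meanwhile \(f^*D=\sum m_iE_i\) is locally principal.
\item Apply a Zariski-lemma argument to the matrix \((E_i\cdot C_j)\), where \(C_j\) are the curves of the characteristic cycle; this matrix plays the role of the Kodaira intersection matrix, so its kernel is spanned by \((m_i)\).
\item The indicator vector of the \(E_i\) contained in \(E\) lies in that kernel, so it would be proportional to \((m_i)\). That is impossible when the multiplicities are unequal, which is exactly the starred case.
\end{itemize}
This gives the statement for a very general deformation, that is, off countably many proper analytic subsets of \(\mathrm{Def}(X,L)\).
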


Coupled with our technical result \Cref{two-cycle-monodromy}, this directly proves 
the following theorem.

\begin{theorem} \label{general-mt-classification}
    Let \(\mathcal{X}\) be a deformation class of hyper-K\"ahler manifolds satisfying 
    \begin{enumerate}
        \item there exists \(X \in \mathcal{X}\) a hyper-K\"ahler manifold admitting a 
        Lagrangian fibration,
        \item the second Betti number of any member \(X \in \mathcal{X}\) is at least 7.
    \end{enumerate}
    Choose \(X \in \mathcal{X}\) general among those admitting Lagrangian fibrations,
    \(f: X \to B\). Then the Mumford--Tate group of the very general fiber \(X_b\) 
    is \(\mathrm{GSp}_{2g}\).
\end{theorem}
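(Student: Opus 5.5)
The plan is to argue by contradiction, combining \Cref{mt-classification}, \Cref{two-cycle-monodromy}, the Hwang--Oguiso classification of general singular fibers, and \Cref{lehn-general-singular-fibers}.

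First we reduce to the maximally varying case. Let \(f: X \to B\) be general among Lagrangian fibrations in \(\mathcal{X}\). By Matsushita \cite{matsushita16}, deformations preserving \(f\) form a codimension one family in the period domain. I would use the hypothesis \(b_2 \geq 7\) here: it makes this family have dimension \(b_2 - 3 \geq 4\), so it cannot be swept out by the countably many loci where \(f\) is isotrivial. Concretely, isotriviality forces fibers isogenous to \(E^g\) by \cite{kim-laza-martin24}, which forces extra Hodge classes on \(X\) and so a proper Noether--Lefschetz-type condition. Hence a general \(f\) varies maximally, and by \Cref{mt-classification} we have \(\mathbf{MT}(X_b) \sim (\mathbb{G}_m \times \Sp_{2g/k}^k)/\{\pm 1\}\) for some \(k\) dividing \(g\). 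It remains to show \(k = 1\).

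Next, suppose \(k \geq 2\). After the Galois cover \(\pi: B' \to B\) constructed earlier, we have \(\pi^* V_\Q = U_1 \oplus \cdots \oplus U_k\), with the splitting defined over \(\Q\). Since \(V_\C\) is irreducible \cite{varvak25}, the covering group permutes the \(U_i\) transitively and nontrivially. This covering group is generated by images of local monodromies around discriminant components, so \Cref{two-cycle-monodromy} applies. It gives a component \(D_1 \subseteq D\) whose local monodromy \(\lambda\) is an involution swapping two factors and fixing the rest. The eigenvalue \(-1\) then appears with finite order on the relevant \(F^1\) piece. By \Cref{local-monodromy-eigenvalues} and the Hwang--Oguiso classification \cite{hwang-oguiso09}, the general singular fiber over \(D_1\) has Kodaira type \(\mathrm{I}_0^*\).

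Finally, we use genericity. The involution swapping \(U_i\) and \(U_j\) is determined by the \(\Q\)-splitting of the variation, and this splitting persists in the family of fibration-preserving deformations. Indeed, by \Cref{mt-classification} applied fiberwise, the Mumford--Tate group of \(X_b\) varies only over countably many loci. So if a general member had \(k \geq 2\), every member near it would too, and each would carry an \(\mathrm{I}_0^*\) component. This contradicts \Cref{lehn-general-singular-fibers}, which excludes \(\mathrm{I}_0^*\) for a general deformation preserving the fibration. Hence \(k = 1\) and \(\mathbf{MT}(X_b) = \mathrm{GSp}_{2g}\).

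I expect the main obstacle to be the last step. One must check carefully that "general" in \Cref{lehn-general-singular-fibers} matches our notion of general, and that Lehn's deformation argument really applies to an \(\mathrm{I}_0^*\) component produced by a monodromy permutation, rather than only to an isolated accidental one. The role of \(b_2 \geq 7\) is most likely to ensure Lehn's genericity statement applies, in addition to excluding isotriviality. My first attempt would be to verify that the component \(D_1\) deforms along with \(f\) over the whole family, since its monodromy class is locally constant.
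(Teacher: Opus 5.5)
Your architecture matches the paper's proof. The general member is not isotrivial. If \(X_b\) is not Hodge-generic, \Cref{two-cycle-monodromy} gives a discriminant component with order-two local monodromy. Hwang--Oguiso makes its general fibre of type \(\mathrm{I}_0^*\), which contradicts \Cref{lehn-general-singular-fibers}.

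Your third step is valid but unnecessary. The Baire-type argument that non-Hodge-genericity would propagate to every nearby deformation does work. However, the paper handles the matching of ``general'' (the issue you flagged) more simply: its closing remark notes that the proofs of \cite[Theorem 5.7]{lehn2016} and \cite[Theorem 6.2]{kim-laza-martin24} apply to the general deformation. So a general \(X\) avoids both exceptional loci at once.

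The step that fails as written is the first one. The chain ``fibres isogenous to \(E^g\) \(\Rightarrow\) extra Hodge classes on \(X\) \(\Rightarrow\) proper Noether--Lefschetz condition'' is asserted, not proved. Read as Picard jumping, which is what a Noether--Lefschetz condition detects, it is false. Take the isotrivial elliptic K3 surface \(y^2=x^3+f_{12}(t)\) with \(f_{12}\) having simple roots, so \(j\equiv 0\) and every singular fibre is irreducible. Its very general analytic Tate--Shafarevich twists are still isotrivial but non-projective, with \(\mathrm{NS}=\Z L\). That is exactly the N\'eron--Severi group of the general member of the family of deformations preserving the fibration.

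The dimension count \(b_2-3\geq 4\) also contributes nothing. A countable union of closed loci fails to cover the family precisely when each locus is proper, and properness is the whole issue. Non-isotriviality of the general Lagrangian-fibred member is the content of \cite[Theorem 6.2]{kim-laza-martin24}, and the paper simply cites it. You should do the same rather than use the Noether--Lefschetz heuristic. Note also that the paper's proof makes no direct use of \(b_2\geq 7\).
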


\begin{proof}
By work of Kim--Laza--Martin \cite[Theorem 6.2]{kim-laza-martin24}, \(f\) is not 
isotrivial. Suppose that \(X_b\) were not Hodge-generic; then \Cref{two-cycle-monodromy}
applies, and \(f\) has a general singular fiber of type \(\mathrm{I}_0^*\), 
contradicting \Cref{lehn-general-singular-fibers}.
\end{proof}

\begin{remark}
Note that the statements of \cite[Theorem 5.7]{lehn2016} and 
\cite[Theorem 6.2]{kim-laza-martin24} in their respective papers are somewhat weaker 
than what is used here, since they only guarantee the existence of an appropriate
deformation. However, the proofs of both theorems show that these results hold for the 
general deformation.
\end{remark}

\bibliographystyle{alphaurl}
\bibliography{refs}

\end{document}